\title[Groupe fondamental]
{           {\protect\hfill \normalfont \tiny
            \\ \vspace{10pt}}
Le groupe fondamental d'un espace homog\`ene \\
d'un groupe alg\'ebrique lin\'eaire 
}
\author{Mikhail Borovoi et Cyril Demarche}
\thanks{M. Borovoi a \'et\'e partiellement soutenu
par le Centre Hermann Minkowski  pour la G\'eom\'etrie \\
\indent C. Demarche a b\'en\'efici\'e d'une aide de l'Agence Nationale de la Recherche portant la r\'ef\'erence ANR-12-BL01-0005.}
\keywords{Algebraic fundamental group, \'etale fundamental group, homogeneous space, linear algebraic group}
\subjclass[2010]{14F35, 14M17, 20G20}
\DeclareTextFontCommand{\textcyr}{\fontencoding{OT2}
    \fontfamily{wncyr}\fontseries{m}\fontshape{n}\selectfont}
\theoremstyle{plain}
\newtheorem{theorem}{Th\'eor\`eme} [section]
\newtheorem{proposition}[theorem]{Proposition}
\newtheorem{lemma}[theorem]{Lemme}
\newtheorem{corollary}[theorem]{Corollaire}
\newtheorem{conditional-result}[theorem]{Conditional Result}
\newtheorem{theorem?}{Theorem(?)} [section]
\newtheorem{proposition?}[theorem]{Proposition(?)}
\newtheorem{lemma?}[theorem]{Lemma(?)}
\newtheorem{corollary?}[theorem]{Corollary(?)}
\newtheorem*{theorem*}{Theorem}
\newtheorem*{proposition*}{Proposition}
\newtheorem*{lemma*}{Lemma}
\newtheorem*{corollary*}{Corollary}
\newtheorem*{question*}{Question}
\newtheorem*{conjecture*}{Conjecture}
\newtheorem*{claim*}{Claim}
\newtheorem*{introtheorem*}{Theorem}
\newtheorem*{introproposition*}{Proposition}
\newtheorem*{introlemma*}{Lemma}
\newtheorem*{introcorollary*}{Corollary}
\theoremstyle{definition}
\newtheorem{notations}[theorem]{Notations}
\newtheorem*{definition*}{Definition}
\newtheorem*{example*}{Example}
\newtheorem{subsec}[theorem]{}
\theoremstyle{remark}
\newtheorem{remark}[theorem]{Remarque}
\newtheorem*{remark*}{Remarque}
\numberwithin{equation}{section}
\DeclareSymbolFont{rsfs}{U}{rsfs}{m}{n}
\DeclareSymbolFontAlphabet{\mathcal}{rsfs}
\DeclareTextFontCommand{\textcyr}{\fontencoding{OT2}
    \fontfamily{wncyr}\fontseries{m}\fontshape{n}\selectfont}
\newcommand{\isoto}{\overset{\sim}{\to}}
\newcommand{\into}{\hookrightarrow}
\newcommand{\labelto}[1]{\xrightarrow{\makebox[1.5em]{\scriptsize ${#1}$}}}
\def\uu{\mathrm{u}}
\def\red{\mathrm{red}}
\def\tor{{\mathrm{tor}}}
\def\sc{{\mathrm{sc}}}
\def\sss{{\mathrm{ss}}}
\def\mult{{\mathrm{mult}}}
\def\ssu{{\mathrm{ssu}}}
\newcommand{\CC}{{\mathbb{C}}}
\newcommand{\ZZ}{{\mathbb{Z}}}
\newcommand{\QQ}{{\mathbb{Q}}}
\newcommand{\Pic}{{\rm Pic}}
\renewcommand{\ker}{{\rm ker}}
\newcommand{\coker}{{\rm coker}}
\newcommand{\Hom}{{\rm Hom}}
\newcommand{\kbar}{{\overline{k}}}
\newcommand{\Gbar}{{\overline G}}
\newcommand{\Hbar}{{\overline H}}
\newcommand{\xbar}{{\overline x}}
\newcommand{\ybar}{{\overline y}}
\newcommand{\That}{{\widehat{T}}}
\newcommand{\Hhat}{{\widehat{H}}}
\newcommand{\Ghat}{{\widehat{G}}}
\newcommand{\Aut}{{\rm Aut}}
\def\G{{\mathbb{G}}}
\def\Hhat{{\widehat{H}}}
\def\Z{{\ZZ}}
\def\Q{{\QQ}}
\def\C{{\CC}}
\def\Ext{{\textup{Ext}}}
\def\top{^{\textup{top}}}
\def\alg{_{\textup{alg}}}
\def\et{{\textup{\'et}}}
\def\top{{\textup{top}}}
\def\an{{}}
\def\alg{{\textup{alg}}}
\def\kerchar{{\textup{\rm kercar}}}
\def\Stab{{\textup{Stab}}}
\def\Zhat{{\widehat{\Z}}}
\def\Gmk{{\G_{m,k}}}
\def\Kbul{{K^\bullet}}
\def\tors{{\rm tors}}
\def\tf{{\rm st}}
\def\AA{{\mathcal{A}}}
\def\sH{{\mathcal{H}}}
\def\tf{{\rm s.t.}}
\def\Zp{\Z_{(p')}}
\def\ii{{\textbf{\textit{i}}}}
\begin{document}

\begin{abstract}
Soit $X$ un espace homog\`ene d'un groupe alg\'ebrique lin\'eaire connexe $G$ sur $\C$.
Soit $x\in X(\C)$. On d\'esigne par $H$ le stabilisateur de $x$ dans $G$.
On montre  que l'on peut d\'efinir alg\'ebriquement le groupe fondamental topologique $\pi_1^\top(X(\C),x)$
si ce groupe fondamental topologique est ab\'elien.
Si $\Pic(G)=0$ et $H$ est connexe ou ab\'elien, on calcule  $\pi_1^\top(X(\C),x)$
en termes des groupes de caract\`eres  de $G$ et $H$.
En outre, si $G$ et $X$ sont d\'efinis sur un corps alg\'ebriquement clos de caract\'eristique $p\ge 0$,
on calcule la partie premi\`ere \`a $p$ du groupe fondamental \'etale de $X$
en termes des groupes de caract\`eres  de $G$ et $H$
(si $\Pic(G)=0$ et $H$ est connexe).

\bigskip

\noindent
{\scshape Abstract.}
Let $X$ be a homogeneous space of a connected linear algebraic group $G$ defined over $\C$.
Let $x\in X(\C)$. We denote by $H$ the stabilizer of $x$ in $G$.
We show that  if the topological fundamental group $\pi_1^\top(X(\C),x)$ is abelian, then it can be defined algebraically.
If $\Pic(G)=0$ and $H$ is connected or abelian, we compute  $\pi_1^\top(X(\C),x)$
in terms of the character groups of $G$ and $H$.
Furthermore, when $G$ and $X$ are defined over an algebraically closed field of characteristic $p\ge 0$,
we compute the prime-to-$p$ \'etale fundamental group of $X$
in terms of the character groups of $G$ and $H$
(if $\Pic(G)=0$ and $H$ is connected).
\end{abstract}

\maketitle

\section{Introduction}

Le groupe fondamental topologique d'un groupe alg\'ebrique lin\'eaire connexe sur  $\C$ a \'et\'e d\'efini alg\'ebriquement
par Merkurjev \cite[\S\,10.1]{Merkurjev} et par le premier auteur   \cite[Def.~1.3]{Bor-Memoir}.
Une troisi\`eme d\'efinition a \'et\'e propos\'ee par Colliot-Th\'el\`ene \cite[Prop.-D\'ef.~6.1]{CT}.
La d\'efinition de  Colliot-Th\'el\`ene a \'et\'e  g\'en\'eralis\'ee par Gonz\'alez-Avil\'es \cite[Def.~3.7]{GA}
et par le premier auteur et Gonz\'alez-Avil\'es  \cite[Def.~2.11]{BG}
aux sch\'emas en groupes r\'eductifs.
Dans cet article  on d\'efinit alg\'ebriquement le groupe fondamental topologique
d'un espace homog\`ene d'un groupe alg\'ebrique lin\'eaire sur $\C$
dans le cas o\`u ce groupe fondamental topologique est ab\'elien,
et on calcule ce groupe fondamental
sous une  certaine condition de connexit\'e  sur le stabilisateur d'un point.
De plus, en utilisant des r\'esultats r\'ecents de Brion et Szamuely \cite{BrSz}
et des r\'esultats classiques sur le groupe fondamental \'etale (voir Szamuely \cite{Sz}),
on consid\`ere le cas o\`u $G$ et $X$ sont d\'efinis sur un corps alg\'ebriquement clos
de caract\'eristique quelconque $p\ge 0$,
et on calcule le groupe fondamental \'etale premier \`a $p$ de $X$
en fonction des groupes de caract\`eres de $G$ et $H$ dans ce cas
(sous la m\^eme hypoth\`ese de connexit\'e pour les stabilisateurs).

\begin{subsec}
Dans tout ce texte, une vari\'et\'e sur un corps $k$ est un $k$-sch\'ema int\`egre, s\'epar\'e et de type fini.
On choisit un \'el\'ement $\ii\in\C$ tel que $\ii^2=-1$.

Soit $X$ une vari\'et\'e  d\'efinie sur $\C$.
Soit $x\in X(\C)$.
On consid\`ere l'espace topologique point\'e $(X(\C),x)$ et le groupe fondamental topologique
$\pi_1^\top(X(\C),x)$.
On \'ecrit $\pi_1^\top(X,x)$ pour $\pi_1^\top(X(\C),x)$.
On pose $\Z(1)=\pi_1^\top(\G_{m,\C}(\C),1)=\pi_1^\top(\C^\times,1)$,
o\`u $\G_{m,\C}$ est le groupe multiplicatif sur $\C$.
On a  un g\'en\'erateur $\xi=\xi_\ii$ (d\'ependent du choix de $\ii$) de $\Z(1)=\pi_1^\top(\C^\times,1)$
donn\'e par le lacet
$$
t\mapsto \exp\,2\pi \ii\, t\colon\quad [0,1]\to \C^\times.
$$
On obtient un isomorphisme $\Z\isoto \Z(1)\colon n\mapsto n\xi$ (d\'ependent du choix de $\ii$).
On pose
$$
\pi_1^\top(X,x)(-1):=\Hom(\Z(1), \pi_1^\top(X,x)),
$$
c'est un ensemble point\'e.
On a une bijection  (d\'ependent du choix de $\ii$)
\begin{equation}\label{eq: phi-mapsto-phi(xi)}
\pi_1^\top(X,x)(-1)=\Hom(\Z(1),\pi_1^\top(X,x))\ \isoto\  \pi_1^\top(X,x),\qquad \phi\mapsto\phi(\xi).
\end{equation}

Si on suppose que le groupe $\pi_1^\top(X,x)$ est ab\'elien,
alors $\pi_1^\top(X,x)(-1)$ est canoniquement un groupe ab\'elien,
et \eqref{eq: phi-mapsto-phi(xi)} est
un isomorphisme de groupes ab\'eliens $\pi_1^\top(X,x)(-1)\isoto \pi_1^\top(X,x)$ (d\'ependent du choix de $\ii$).

Soit $f\colon (\G_{m,\C}, 1)\to (X,x)$ un morphisme de vari\'et\'es point\'ees.
Par fonctorialit\'e on obtient un \'el\'ement
$$
f_*^\top\in \Hom(\Z(1), \pi_1^\top(X,x))=\pi_1^\top(X,x)(-1).
$$
On dit que les \'el\'ements de $\pi_1^\top(X,x)(-1)$ de la  forme $f_*^\top$  sont {\em  alg\'ebriques}.
On d\'esigne par $\pi_1^\top(X,x)(-1)_\alg$
le sous-ensemble point\'e de  $\pi_1^\top(X,x)(-1)$
constitu\'e des \'el\'ements    alg\'ebriques.
\end{subsec}

\begin{subsec}
Soit $k$ un corps alg\'ebriquement clos de caract\'eristique 0.
Soit $X$ une vari\'et\'e  sur $k$, et soit $x\in X(k)$.
On d\'esigne par $\pi_1^\et(X,x)$ le groupe fondamental \'etale de la vari\'et\'e point\'ee $(X,x)$;
voir  Szamuely \cite[Section 5.4]{Sz}.
C'est un groupe topologique.
On pose $\Zhat(1)=\pi_1^\et(\G_{m,k},1)$.

On pose
$$
\pi_1^\et(X,x)(-1):=\Hom_{\textup{cont}}(\Zhat(1), \pi_1^\et(X,x))\, ,
$$
l'ensemble point\'e des homomorphismes continus  de $\Zhat(1)$ vers $\pi_1^\et(X,x)$,
c'est un ensemble point\'e.

Si on suppose que le groupe $\pi_1^\et(X,x)$ est ab\'elien,
alors $\pi_1^\et(X,x)(-1)$ est canoniquement un groupe  ab\'elien.

Soit $f\colon (\G_{m,k}, 1)\to (X,x)$ un morphisme de $k$-vari\'et\'es point\'ees.
Par fonctorialit\'e on obtient un element
$$
f_*^\et\in \Hom_{\textup{cont}}(\Zhat(1), \pi_1^\et(X,x))=\pi_1^\et(X,x)(-1).
$$
On dit que les \'el\'ements de $\pi_1^\et(X,x)(-1)$ de la  forme $f_*^\et$  sont {\em  alg\'ebriques}.
On d\'esigne par $\pi_1^\et(X,x)(-1)_\alg$ le sous-ensemble point\'e de $\pi_1^\et(X,x)(-1)$
constitu\'e des \'el\'ements alg\'ebriques.

\end{subsec}

\begin{subsec}\label{subsec:varkappa}
Soit $(X,x)$ une vari\'et\'e  point\'ee sur $\C$.
Alors $\pi_1^\et(X,x)$ est canoniquement isomorphe  \`a la compl\'etion profinie de $\pi_1^\top(X,x)$
(voir Grothendieck \cite[Expos\'e XII, Corollaire 5.2]{SGA1}),
et en particulier $\Zhat(1)=\pi_1^\et(\G_{m,\C},1)$ est canoniquement isomorphe
\`a la compl\'etion profinie de $\Z(1)=\pi_1^\top(\G_{m,\C},1)$.
On obtient un isomorphisme entre $\Zhat(1)$ et $\Zhat$ (d\'ependent du choix de $\ii$).

Tout homomorphisme $\Z(1)\to\pi_1^\top(X,x)$ induit un homomorphisme continu des compl\'etions $\Zhat(1)\to\pi_1^\et(X,x)$.
Par cons\'equent on obtient une application
$$\varkappa\colon \pi_1^\top(X,x)(-1)\to \pi_1^\et(X,x)(-1).$$
Si le groupe $\pi_1^\top(X,x)$ est ab\'elien et, par cons\'equent, $\pi_1^\et(X,x)$ est ab\'elien,
alors $\varkappa$ est un homomorphisme  des groupes ab\'eliens.
\end{subsec}

\begin{theorem}[Th\'eor\`eme \ref{thm:general-bis}]\label{thm:general}
\begin{enumerate}
\item[(i)]
Soit $X$ un espace homog\`ene d'un groupe alg\'ebrique lin\'eaire connexe $G$ d\'efini sur $\C$.
Alors
 l'application $\varkappa\colon \pi_1^\top(X,x)(-1)\to \pi_1^\et(X,x)(-1)$
induit une bijection
$$
 \pi_1^\top(X,x)(-1)\isoto \pi_1^\et(X,x)(-1)_\alg\subset \pi_1^\et(X,x)(-1).
$$
\item[(ii)]  Si en plus on suppose que le groupe $\pi_1^\et(X,x)$ est ab\'elien,
alors le groupe $\pi_1^\top(X,x)$ est ab\'elien,
le sous-ensemble point\'e $\pi_1^\et(X,x)(-1)_\alg$ du groupe ab\'elien $\pi_1^\et(X,x)(-1)$ est un sous-groupe,
et l'homomorphisme $\varkappa\colon \pi_1^\top(X,x)(-1)\to \pi_1^\et(X,x)(-1)$
induit un isomorphisme de groupes ab\'eliens
$$
 \pi_1^\top(X,x)(-1)\isoto \pi_1^\et(X,x)(-1)_\alg.
$$
\end{enumerate}
\end{theorem}

\def\PP{{\mathbb{P}}}
Comme T.~Szamuely nous l'a fait remarquer, ce r\'esultat ne s'\'etend pas aux groupes alg\'ebriques non lin\'eaires.
En effet, d\'ej\`a pour une vari\'et\'e ab\'elienne $A$ sur $\C$ de dimension positive,
il n'y a pas de morphisme non constant de $\G_{m,\C}$ vers $A$
(car il n'existe pas d'application rationnelle non constante de $\PP^1_\C$ vers $A$, voir \cite[II.1, Cor. du Thm. 4]{Lang}),
alors que $\pi_1^\top(A)\neq 0$.

\begin{corollary}[Corollaire \ref{cor:anti-Serre-bis}]\label{cor:anti-Serre}
Soit $G$ un groupe alg\'ebrique lin\'eaire connexe sur $\C$, et soit $X$ un espace homog\`ene de $G$.
Soit $x\in X(\C)$ et soit $\tau\in\Aut(\C)$.
On suppose que le groupe fondamental \'etale  $\pi_1^\et(X,x)$ est ab\'elien.
Alors  le  groupe fondamental topologique $\pi_1^\top(\tau X,\tau x)$ est canoniquement isomorphe  \`a   $\pi_1^\top(X,x)$.
\end{corollary}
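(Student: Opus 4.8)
Nous proposons la d\'emonstration suivante, qui ram\`ene l'\'enonc\'e au Th\'eor\`eme \ref{thm:general} : l'id\'ee est que ce th\'eor\`eme fournit une description \emph{alg\'ebrique} de $\pi_1^\top(X,x)$, insensible par nature \`a l'action de $\Aut(\C)$ --- au contraire de $\pi_1^\top$ lui-m\^eme, comme le montrent les exemples de Serre. On commence par observer que $\tau X$ est un espace homog\`ene du groupe alg\'ebrique lin\'eaire connexe $\tau G$ sur $\C$, de point base $\tau x\in(\tau X)(\C)$ et de stabilisateur $\tau H$ : la conjugaison par $\tau$ pr\'eserve en effet les propri\'et\'es d'\^etre un groupe alg\'ebrique, d'\^etre affine, d'\^etre connexe, ainsi que la transitivit\'e de l'action de $G$ sur $X$ (via la bijection $X(\C)\to(\tau X)(\C)$ induite par $\tau$). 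On applique alors le Th\'eor\`eme \ref{thm:general} d'une part \`a $(X,x)$, vu comme espace homog\`ene de $G$, d'autre part \`a $(\tau X,\tau x)$, vu comme espace homog\`ene de $\tau G$ ; on obtient deux isomorphismes canoniques
\[
\pi_1^\top(X,x)(-1)\ \isoto\ \pi_1^\et(X,x)(-1)_\alg ,
\qquad
\pi_1^\top(\tau X,\tau x)(-1)\ \isoto\ \pi_1^\et(\tau X,\tau x)(-1)_\alg .
\]

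Le c{\oe}ur de la d\'emonstration est la fonctorialit\'e en $\tau$ du membre \'etale. La conjugaison par $\tau$ induit un isomorphisme de sch\'emas $\tau X\isoto X$ envoyant $\tau x$ sur $x$ ; comme le groupe fondamental \'etale ne d\'epend que du sch\'ema sous-jacent, on en d\'eduit un isomorphisme $\pi_1^\et(\tau X,\tau x)\isoto\pi_1^\et(X,x)$. Le groupe $\G_m$ et le point $1$ \'etant d\'efinis sur $\QQ$, on dispose de m\^eme d'un isomorphisme canonique $\G_{m,\C}\isoto\tau\G_{m,\C}$ envoyant $1$ sur $\tau 1$, d'o\`u un isomorphisme $\Z(1)\isoto\pi_1^\top(\tau\G_{m,\C},\tau 1)$ et un automorphisme de $\Zhat(1)$, \'egal \`a la multiplication par la valeur en $\tau$ du caract\`ere cyclotomique (en particulier inversible). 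En recollant ces isomorphismes on obtient un isomorphisme $\pi_1^\et(X,x)(-1)\isoto\pi_1^\et(\tau X,\tau x)(-1)$, dont il faut v\'erifier qu'il respecte le sous-groupe des \'el\'ements strictement alg\'ebriques : cela r\'esulte de ce que la conjugaison par $\tau$ transforme tout $\C$-morphisme $f\colon(\G_{m,\C},1)\to(X,x)$ en le $\C$-morphisme $(\G_{m,\C},1)\to(\tau X,\tau x)$ obtenu en composant $\tau f$ avec l'isomorphisme canonique $\G_{m,\C}\isoto\tau\G_{m,\C}$, jointe \`a la fonctorialit\'e de $\pi_1^\et$. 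On obtient ainsi un isomorphisme canonique $\pi_1^\et(X,x)(-1)_\alg\isoto\pi_1^\et(\tau X,\tau x)(-1)_\alg$.

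En composant les trois isomorphismes pr\'ec\'edents, on en d\'eduit un isomorphisme canonique $\pi_1^\top(X,x)(-1)\isoto\pi_1^\top(\tau X,\tau x)(-1)$. Il reste \`a \emph{d\'etordre} : puisque $\Z(1)$ est libre de rang $1$ et s'identifie canoniquement \`a $\pi_1^\top(\tau\G_{m,\C},\tau 1)$, l'\'evaluation en un g\'en\'erateur de $\Z(1)$ fournit des isomorphismes $\pi_1^\top(X,x)(-1)\isoto\pi_1^\top(X,x)$ et $\pi_1^\top(\tau X,\tau x)(-1)\isoto\pi_1^\top(\tau X,\tau x)$, compatibles avec l'isomorphisme ci-dessus ; on en tire un isomorphisme de groupes $\pi_1^\top(X,x)\isoto\pi_1^\top(\tau X,\tau x)$, ind\'ependant du g\'en\'erateur choisi (remplacer ce dernier par son oppos\'e revient \`a conjuguer par l'inversion des deux c\^ot\'es, ce qui laisse inchang\'e un morphisme de groupes). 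C'est l'isomorphisme canonique cherch\'e.

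Une fois acquis le Th\'eor\`eme \ref{thm:general}, l'unique point v\'eritablement \`a v\'erifier est la compatibilit\'e de l'action de $\Aut(\C)$ avec la torsion $(-1)$ et avec le sous-groupe des \'el\'ements strictement alg\'ebriques ; il s'agit d'une v\'erification de routine et non d'un obstacle r\'eel, toute la difficult\'e de l'\'enonc\'e \'etant concentr\'ee dans le Th\'eor\`eme \ref{thm:general}.
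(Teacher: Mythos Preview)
Your proof is correct and follows essentially the same route as the paper's own argument: both use the Th\'eor\`eme~\ref{thm:general} to replace $\pi_1^\top(-)(-1)$ by $\pi_1^\et(-)(-1)_\alg$, transport the latter along $\tau$ (which acts on the \'etale side and preserves strictly algebraic elements), and then detwist by choosing a generator of $\Z(1)$, checking that the result is independent of the choice. You spell out more carefully the compatibility with the cyclotomic twist and the verification that the algebraic subgroup is preserved, whereas the paper condenses this into a single commutative square; the substance is the same.
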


On remarque que ce n'est pas le cas pour des vari\'et\'es lisses quelconques sur $\C$ (avec des groupes fondamentaux \'etales non ab\'eliens),
voir Serre \cite{Serre} et  Milne et Suh \cite{MS}.


\begin{notations}\label{not}
Soit $k$ un corps alg\'ebriquement clos.
Soit $G$ un groupe alg\'ebrique lin\'eaire connexe d\'efini sur $k$.
On utilise les notations suivantes :

$G^\uu$ est le radical unipotent de $G$;

$G^\red=G/G^\uu$, qui est un groupe r\'eductif;

$G^\sss=[G^\red, G^\red]$, qui est semi-simple;

$G^\sc$ est le rev\^etement universel de $G^\sss$, il est simplement connexe;

$G^\tor=G^\red/G^\sss$, qui est un tore;

$G^\ssu=\ker[G\to G^\tor]$, qui est une extension d'un groupe semi-simple connexe par un groupe unipotent.

On remarque que $G^\tor$ est le plus grand quotient torique de $G$ et que $G^\ssu$ est connexe et sans caract\`eres.

Si $T$ est un tore sur $k$, on \'ecrit $T_*$ pour le groupe des cocaract\`eres de $T$, c'est-\`a-dire
 $T_*:=\Hom_k(\G_{m,k}, T)$.
On a en particulier $T_*\cong\Hom(\That,\Z)$, o\`u $\That$ est le groupe des caract\`eres de $T$.

Soit $H$ un groupe alg\'ebrique lin\'eaire sur $k$.
On \'ecrit $\pi_0(H)=H/H^0$, o\`u $H^0$ est la composante neutre de $H$.
Si $\pi_0(H)$ est ab\'elien, on pose
$$
\pi_0(H)(-1):=\Hom_\Z(\Hom_k(\pi_0(H),\G_{m,k}),\Q/\Z).
$$
On remarque que si $k=\C$, le groupe $\pi_0(H)(-1)$ est isomorphe    \`a $\pi_0(H)$, mais non canoniquement.
\end{notations}

\begin{subsec}
Soit $X$ un espace homog\`ene d'un groupe alg\'ebrique lin\'eaire connexe $G$ d\'efini sur
un corps alg\'ebriquement clos $k$.
On choisit un $k$-point $x\in X(k)$ et on pose $H=\mathrm{Stab}_G(x)$.
On d\'esigne par $H^\mult$ le groupe quotient maximal de $H$ de type multiplicatif.
On pose $H^\kerchar:=\ker [H\to H^\mult]$.
Alors $H^\kerchar$ est l'intersection
des noyaux de tous les caract\`eres $\chi\colon H\to\Gmk$ de $H$.
On suppose :
\begin{enumerate}[\upshape (i)]
    \item  $\Pic(G)=0$,
    \item $H^\kerchar$ est connexe.
\end{enumerate}
On remarque que (i) est satisfait si et seulement si $G^\sss$ est simplement connexe
(voir Sansuc \cite{Sansuc},  Lemme 6.9 et Remarques 6.11.3)
et que (ii) est satisfait  si $H$ est connexe ou  si  $k$ est de caract\'eristique 0 et $H$ est ab\'elien.

On d\'esigne  $\Ghat:=\Hom(G,\Gmk)$ et $\Hhat:=\Hom(H,\Gmk)$.
On \'ecrit
$$
\Ext^0_\Z(\Ghat\to\Hhat,\Z):=\Ext^0_\Z([\Ghat\labelto{i^*}\Hhat\rangle,\Z),
$$
o\`u $[\Ghat\labelto{i^*}\Hhat\rangle$ est un complexe en degr\'es 0 et 1,
et l'homomorphisme $i^*$ est induit par l'inclusion $i\colon H\into G$.
Voir \S\,\ref{subsec:Ext-complex} ci-dessous pour la definition de $\Ext_\Z^0$.
\end{subsec}

\begin{theorem}[Th\'eor\`eme \ref{thm:main-bis}]\label{thm:main}
Soit $X$ un espace homog\`ene d'un groupe alg\'ebrique lin\'eaire connexe $G$
sur $\C$.
Soit $x\in X(\C)$,
on pose $H=\mathrm{Stab}_G(x)$.
On suppose que $\Pic(G)=0$ et que $H^\kerchar$ est connexe.
Alors il existe un isomorphisme canonique de groupes ab\'eliens
$$\pi_1^\top(X,x)(-1)\isoto\Ext^0_\Z(\Ghat\to\Hhat,\Z).$$
\end{theorem}

\begin{corollary}[Corollaire \ref{cor:main-bis}]\label{cor:main}
Sous les hypoth\`eses du th\'eor\`eme \ref{thm:main}

(a) on a une suite exacte canonique
\begin{equation}\label{eq:seq-exacte}
\Hom(\Hhat,\Z)\labelto{i_*}\Hom(\Ghat,\Z)\to\pi_1^\top(X,x)(-1)\to\pi_0(H)(-1)\to 0,
\end{equation}
o\`u $i\colon H\hookrightarrow G$ est l'homomorphisme d'inclusion;

(b) si en plus le sous-groupe $H$ est connexe,
alors la suite exacte \eqref{eq:seq-exacte} induit un isomorphisme canonique
$$
\coker [H^\tor_*\labelto{i_*}G^\tor_*] \isoto \pi^\top_1(X,x)(-1).
$$
\end{corollary}

On remarque que ce corollaire \ref{cor:main}(b) est une version plus explicite de \cite[Thm.~8.5(i)]{BvH2}.

\begin{subsec}
Soit $G,\ X,\ H$ comme dans le th\'eor\`eme \ref{thm:main}, et on suppose que $H$ est connexe.
On choisit des tores maximaux compatibles $T_{H^\sc}\subset H^\sc$ et $T_{H^\red}\subset H^\red$.
On a un homomorphisme canonique $T_{H^\red}\to G^\tor$.
On consid\`ere la cohomologie du complexe de groupes de cocaract\`eres
$$
C_{X *}:=\langle T_{H^\sc *}\to T_{H^\red *}\to G^\tor_*],
$$
o\`u $\langle\, ,\ ]$ signifie que $G^\tor_*$ est en degr\'e $0$.
Ce complexe est isomorphe au dual (au sens du foncteur "Hom interne"
$\mathcal{H}\!\!om_{\Z}^*(\,\cdot\, ,\Z)$\,) du complexe  $\widehat{C}_X$
(ou $\widehat{C}'_X$)  introduit dans \cite{Dem-JOA} et \cite{Dem-Edinb}.

On remarque que $\sH^0(C_{X *})=\coker [H^\tor_*\labelto{i_*}G^\tor_*]$, et donc le corollaire \ref{cor:main}(b) dit que
$\pi_1^\top(X,x)(-1)\cong \sH^0(C_{X *})$, o\`u on \'ecrit $\sH^i(C_{X *})$ pour $i$-\`eme groupe de cohomologie du complexe $C_{X*}$.
\end{subsec}

\begin{theorem}[Th\'eor\`eme \ref{thm:pi2-bis}]\label{thm:pi2}
Soit $X$ un espace homog\`ene d'un groupe alg\'ebrique lin\'eaire connexe $G$
sur $\C$.
Soit $x\in X(\C)$,
on pose $H=\mathrm{Stab}_G(x)$.
On suppose que $\Pic(G)=0$ et que $H$ est connexe.
Alors il existe un isomorphisme canonique de groupes ab\'eliens
$$
\mathcal{H}^{-1}(C_{X*})\isoto \pi^\top_2(X,x)(-1) .$$
\end{theorem}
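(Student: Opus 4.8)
\emph{Plan de d\'emonstration.}
Le plan est d'exploiter la fibration topologique
\[
H(\C)\to G(\C)\to X(\C),
\]
o\`u l'on a identifi\'e $X$ \`a $G/H$ (par $gH\mapsto gx$) et o\`u $G(\C)\to(G/H)(\C)$ est un fibr\'e localement trivial de fibre $H(\C)$ pour la topologie analytique, donc une fibration de Serre. Le premier point est d'observer que $\pi_2^\top$ s'annule sur tout groupe alg\'ebrique lin\'eaire connexe $F$ sur $\C$ : la d\'ecomposition de Levi donne $F(\C)\cong F^\red(\C)\times F^\uu(\C)$ avec $F^\uu(\C)$ contractile, de sorte que $F(\C)$ est homotopiquement \'equivalent \`a un sous-groupe compact maximal, qui est connexe ; or $\pi_2$ d'un groupe de Lie compact connexe s'annule (\'E.~Cartan). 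En particulier $\pi_2^\top(G)=\pi_2^\top(H)=0$, et la suite exacte longue d'homotopie de la fibration fournit un isomorphisme canonique $\pi_2^\top(X,x)\isoto\ker[\pi_1^\top(H)\to\pi_1^\top(G)]$. Ces groupes \'etant ab\'eliens de type fini, l'application du foncteur exact $(-1)=\Hom(\Z(1),-)$ donne
\[
\pi_2^\top(X,x)(-1)\ \isoto\ \ker\bigl[\pi_1^\top(H)(-1)\to\pi_1^\top(G)(-1)\bigr],
\]
et il restera \`a identifier ce noyau avec $\mathcal{H}^{-1}(C_{X*})$.

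On fixe des tores maximaux compatibles $T_{H^\sc}\subset H^\sc$ et $T_{H^\red}\subset H^\red$ comme dans la d\'efinition de $C_{X*}$. Comme $H$ est connexe, $\pi_1^\top(H)=\pi_1^\top(H^\red)$, et la description classique du groupe fondamental d'un groupe r\'eductif via son rev\^etement simplement connexe (voir Borovoi \cite{Bor-Memoir}) donne un isomorphisme canonique
\[
\pi_1^\top(H)(-1)\ \isoto\ \coker\bigl[T_{H^\sc *}\to T_{H^\red *}\bigr],
\]
o\`u le morphisme provient de $H^\sc\to H^\sss\into H^\red$ ; ce conoyau n'est autre que $T_{H^\red *}$ modulo l'image de la diff\'erentielle $d^{-2}$ de $C_{X*}$. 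Du c\^ot\'e de $G$, comme $\Pic(G)=\Pic(G^\red)$ ($G\to G^\red$ \'etant un torseur sous un groupe unipotent), l'hypoth\`ese $\Pic(G)=0$ \'equivaut \`a $\Pic(G^\red)=0$ et implique que $G^\sss$ est simplement connexe, c'est-\`a-dire que $G^\sc=G^\sss$ (voir Sansuc \cite{Sansuc}, Lemme 6.9) ; \`a l'aide de la suite exacte de tores $1\to T_{G^\sss}\to T_{G^\red}\to G^\tor\to 1$ on en d\'eduit que le morphisme canonique $G\to G^\tor$ induit un isomorphisme $\pi_1^\top(G)(-1)\isoto G^\tor_*$. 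Par fonctorialit\'e, sous ces deux identifications le morphisme $\pi_1^\top(H)(-1)\to\pi_1^\top(G)(-1)$ devient le morphisme $\coker[T_{H^\sc *}\to T_{H^\red *}]\to G^\tor_*$ induit par la composition $T_{H^\red}\into H^\red\to G^\tor$ (qui se factorise par $H^\tor$), c'est-\`a-dire celui induit par la diff\'erentielle $d^{-1}$ de $C_{X*}$. Puisque $d^{-1}\circ d^{-2}=0$, son noyau vaut $\ker(d^{-1})/\im(d^{-2})=\mathcal{H}^{-1}(C_{X*})$, d'o\`u l'isomorphisme annonc\'e.

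La canonicit\'e se v\'erifie en remarquant que deux choix de tores maximaux compatibles sont conjugu\'es par un \'el\'ement de $H(\C)$ (puisque $H$ est connexe) et que la conjugaison agit trivialement sur les groupes d'homotopie ; la compatibilit\'e avec l'identification de $C_{X*}$ au dual de $\widehat{C}_X$ \'evoqu\'ee plus haut se traite de m\^eme. La principale difficult\'e ne sera pas conceptuelle : ce sera le contr\^ole pr\'ecis, par fonctorialit\'e, des compatibilit\'es du paragraphe pr\'ec\'edent --- l'accord des isomorphismes $\pi_1^\top(G)(-1)\isoto G^\tor_*$ et $\pi_1^\top(H)(-1)\isoto\coker[T_{H^\sc *}\to T_{H^\red *}]$ avec le morphisme induit par $i\colon H\into G$ et avec les diff\'erentielles de $C_{X*}$ ---, auquel s'ajoute la r\'eduction $\Pic(G)=0\Rightarrow G^\sc=G^\sss$ qui rend trivial le facteur $G$. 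On notera que sans l'hypoth\`ese $\Pic(G)=0$ le complexe $C_{X*}$ ne calcule pas le bon noyau : d\'ej\`a pour $G=H=\mathrm{PGL}_2$ et $X$ r\'eduit \`a un point, on a $\pi_2^\top(X)=0$ alors que $\mathcal{H}^{-1}(C_{X*})\cong\Z/2$.
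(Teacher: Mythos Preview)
Your argument is correct and follows essentially the same route as the paper: both use the long exact homotopy sequence of the fibration $H(\C)\to G(\C)\to X(\C)$ together with Cartan's vanishing $\pi_2^\top(G)=0$ to identify $\pi_2^\top(X,x)(-1)$ with $\ker[\pi_1^\top(H)(-1)\to\pi_1^\top(G)(-1)]$, then invoke \cite[Prop.~1.11]{Bor-Memoir} and the hypothesis $\Pic(G)=0$ to rewrite this kernel as $\mathcal{H}^{-1}(C_{X*})$. The paper packages the last step as a comparison of two exact sequences in a commutative diagram, whereas you compute the kernel directly; your added remarks on canonicity with respect to the choice of maximal tori and the counterexample for $\Pic(G)\neq 0$ are welcome complements not spelled out in the paper.
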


On remarque que le th\'eor\`eme \ref{thm:pi2} est plus fort que le th\'eor\`eme 8.5(ii) de \cite{BvH2},
o\`u seulement $\pi_2^\top(X,x)(-1)$ modulo torsion a \'et\'e calcul\'e.
Plus explicitement, le th\'eor\`eme \ref{thm:pi2} dit que
$$
\pi_2^\top(X,x)(-1)\cong\ker[T_{H^\red *}\to G^\tor_*]/T_{H^\sc *}\,.
$$

\begin{subsec}
Supposons maintenant que $G$ et $X$ sont d\'efinis sur un corps alg\'ebriquement clos $k$
de caract\'eristique quelconque  $p\ge 0$.
On note $\pi_1^\et(X, x)^{(p')}$ le quotient maximal premier \`a $p$ du groupe fondamental \'etale de $X$.
On d\'efinit
$$
\pi_1^\et(X, x)^{(p')}(-1)=\Hom_{\textup{cont}}(\pi_1^\et(\G_{m,k}, 1)^{(p')},\pi_1^\et(X, x)^{(p')}).
$$
On \'ecrit $\Z_{(p')}$ pour le produit direct des anneaux $\Z_\ell$ pour $\ell\neq p$.

En utilisant  des r\'esultats de Brion et Szamuely \cite{BrSz}, on  d\'emontre le th\'eor\`eme suivant,
qui est une version du corollaire \ref{cor:main}(b) en caract\'eristique positive :
\end{subsec}

\begin{theorem}[Th\'eor\`eme \ref{thm:charp-bis}] \label{thm:charp}
Soit $k$ un corps alg\'ebriquement clos de caract\'eristique $p \geq 0$. Soit $G/k$ un groupe lin\'eaire connexe lisse
et $X/k$ un espace homog\`ene de $G$. Soit $x \in X(k)$, on pose $H := \textup{Stab}_G(x)$.
On suppose que $\Pic(G) = 0$ et que $H$ est lisse et connexe.
 Alors il existe un isomorphisme canonique de groupes topologiques ab\'eliens :
$$\coker[H^\tor_*\labelto{i_*}G^\tor_*] \otimes_{\Z} \Z_{(p')} \isoto \pi_1^\et(X,x)^{(p')}(-1) \, .$$
\end{theorem}

On remarque que le th\'eor\`eme \ref{thm:charp}
g\'en\'eralise le cas particulier
du th\'eor\`eme 1.2(b) de Brion et Szamuely \cite{BrSz} o\`u $G$ est un groupe lin\'eaire,
et a \'et\'e inspir\'e par ce th\'eor\`eme de Brion et Szamuely.
Remarquons \'egalement que l'hypoth\`ese de lissit\'e sur $H$ peut \^etre enlev\'ee (voir \cite{BrSz}, d\'ebut de la section 3).

\smallskip

On peut g\'en\'eraliser le th\'eor\`eme \ref{thm:charp} en assouplissant l'hypoth\`ese de connexit\'e sur le stabilisateur,
comme dans le th\'eor\`eme \ref{thm:main}.

\begin{theorem}[Th\'eor\`eme \ref{thm:charp non connexe-bis}] \label{thm:charp non connexe}
Soit $k$ un corps alg\'ebriquement clos de caract\'eristique $p \geq 0$.
Soit $G/k$ un groupe lin\'eaire connexe lisse et $X/k$ un espace homog\`ene de $G$.
Soit $x \in X(k)$, on pose $H := \textup{Stab}_G(x)$.
On suppose que $\Pic(G) = 0$, $H$ est lisse et $H^\kerchar$ est connexe.
Alors il existe un isomorphisme canonique de groupes topologiques ab\'eliens :
$$\Ext^0_\Z(\Ghat\to\Hhat,\Z) \otimes_{\Z} \Z_{(p')} \isoto \pi_1^\et(X,x)^{(p')}(-1) \, .$$
\end{theorem}

Bien que le th\'eor\`eme \ref{thm:charp} soit un cas particulier du th\'eor\`eme \ref{thm:charp non connexe},
on prouve le th\'eor\`eme \ref{thm:charp} s\'epar\'ement, car il admet une preuve simple via une suite exacte de fibration.
Remarquons \`a nouveau que l'hypoth\`ese de lissit\'e sur $H$ peut \^etre enlev\'ee (voir \cite{BrSz}, d\'ebut de la section 3).

Le plan de l'article est le suivant.
Dans \S\,\ref{s:1}, on prouve  le th\'eor\`eme \ref{thm:general} et le corollaire \ref{cor:anti-Serre}.
Dans \S\,\ref{s:2}, on rappelle les constructions de groupes auxiliaires et d'espaces homog\`enes,
dont nous avons besoin pour notre d\'emonstration des th\'eor\`emes \ref{thm:main} et \ref{thm:charp non connexe}.
Dans \S\,\ref{s:3}, on prouve  le th\'eor\`eme \ref{thm:main},
le corollaire \ref{cor:main} et le th\'eor\`eme \ref{thm:pi2}.
Dans \S\,\ref{s:5}, on prouve le th\'eor\`eme \ref{thm:charp}.
Dans \S\,\ref{s:6}, on prouve le th\'eor\`eme \ref{thm:charp non connexe}.

\section{Espaces homog\`enes  sur $\C$}\label{s:1}

Dans cette section, on d\'emontre le th\'eor\`eme \ref{thm:general} et le corollaire \ref{cor:anti-Serre}.

\begin{proposition}\label{prop:image}
Soit $(X,x)$ une vari\'et\'e point\'ee sur $\C$.
Alors
$$
\varkappa(\pi_1^\top(X,x)(-1)_\alg)=\pi_1^\et(X,x)(-1)_\alg\, ,
$$
o\`u $\varkappa\colon \pi_1^\top(X,x)(-1)\to \pi_1^\et(X,x)(-1)$
est l'application d\'efinie dans \S\,\ref{subsec:varkappa}.
\end{proposition}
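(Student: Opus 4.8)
Le plan est le suivant : tout se ram\`ene \`a v\'erifier que, pour chaque morphisme de vari\'et\'es point\'ees $f\colon(\G_{m,\C},1)\to(X,x)$, l'homomorphisme $\varkappa$ envoie l'\'el\'ement strictement alg\'ebrique $f_*^\top\in\pi_1^\top(X,x)(-1)$ sur l'\'el\'ement $f_*^\et\in\pi_1^\et(X,x)(-1)$ ; l'\'egalit\'e des sous-groupes engendr\'es en r\'esultera de mani\`ere purement formelle.

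D'abord, je rappelle que l'isomorphisme de comparaison de Grothendieck \cite[Expos\'e XII, Corollaire 5.2]{SGA1}, qui identifie $\pi_1^\et(Y,y)$ \`a la compl\'etion profinie de $\pi_1^\top(Y,y)$, est fonctoriel en la vari\'et\'e complexe point\'ee $(Y,y)$. Appliqu\'e au morphisme $f$, il fournit un carr\'e commutatif dont les fl\`eches horizontales sont la compl\'etion profinie $\widehat{f_*^\top}$ de $f_*^\top\colon\Z(1)\to\pi_1^\top(X,x)$ et l'application $f_*^\et\colon\Zhat(1)\to\pi_1^\et(X,x)$, et dont les fl\`eches verticales sont les isomorphismes de comparaison relatifs \`a $\G_{m,\C}$ et \`a $X$. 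Or, d'apr\`es la construction de $\varkappa$ rappel\'ee au \S\,\ref{subsec:varkappa}, pour tout homomorphisme $\phi\colon\Z(1)\to\pi_1^\top(X,x)$ l'\'el\'ement $\varkappa(\phi)$ est exactement $\widehat\phi$ transport\'e par ces isomorphismes de comparaison. En prenant $\phi=f_*^\top$, la commutativit\'e du carr\'e donne donc $\varkappa(f_*^\top)=f_*^\et$.

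Il reste \`a passer aux sous-groupes engendr\'es : comme $\varkappa$ est un homomorphisme de groupes, l'image par $\varkappa$ du sous-groupe engendr\'e par une partie $S$ co\"\i ncide avec le sous-groupe engendr\'e par $\varkappa(S)$. On applique ceci \`a $S=\{\,f_*^\top : f\colon(\G_{m,\C},1)\to(X,x)\,\}$ : par d\'efinition le sous-groupe engendr\'e par $S$ est $\pi_1^\top(X,x)(-1)_\alg$, tandis que $\varkappa(S)=\{\,f_*^\et : f\,\}$ d'apr\`es l'\'etape pr\'ec\'edente, et le sous-groupe engendr\'e par ce dernier ensemble est par d\'efinition $\pi_1^\et(X,x)(-1)_\alg$. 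On conclut
$$
\varkappa\bigl(\pi_1^\top(X,x)(-1)_\alg\bigr)=\pi_1^\et(X,x)(-1)_\alg.
$$
Le seul point demandant un minimum de soin est donc la fonctorialit\'e (en la vari\'et\'e point\'ee, et non pas seulement vari\'et\'e par vari\'et\'e) de l'isomorphisme de comparaison topologique--\'etale, laquelle est contenue dans la r\'ef\'erence \cite{SGA1} ci-dessus ; tout le reste de l'argument est formel.
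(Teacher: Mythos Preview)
Your proof is correct and follows exactly the same approach as the paper's: the paper's own proof is a single sentence stating that $\varkappa(f_*^\top)=f_*^\et$ for every $f\colon(\G_{m,\C},1)\to(X,x)$, from which the proposition follows. You have simply spelled out in full the two points the paper leaves implicit, namely the functoriality of the comparison isomorphism and the passage to generated subgroups.
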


\begin{proof}[D\'emonstration]
Soit $f\colon (\G_{m,\C},1)\to (X,x)$ un morphisme de vari\'et\'es point\'ees,
alors $\varkappa(f_*^\top)=f_*^\et$, ce qui d\'emontre la proposition.
\end{proof}

\begin{proposition}\label{prop:str-algebrique}
Soit $X$ un espace homog\`ene d'un groupe alg\'ebrique lin\'eaire connexe $G$ d\'efini sur $\C$,
et $x\in X(\C)$.
Alors tout \'el\'ement de l'ensemble $\pi_1^\top(X,x)(-1)$ est   alg\'ebrique.
\end{proposition}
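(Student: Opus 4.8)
The plan is to go through the homotopy exact sequence of the fibration $G(\C)\to X(\C)$ and to build the required morphisms $\G_m\to X$ explicitly out of cocharacters of $G$. Set $H:=\Stab_G(x)$ and write $\pi\colon G\to X$, $g\mapsto g\cdot x$, so that $X=G/H$ and $x=\pi(e)$; replacing $G$ by $G^0$ and $X$ by the $G^0$-orbit of $x$ (which contains the image of every pointed morphism $\G_m\to X$ and has the same fundamental group at $x$), one reduces to $G$ connected. Since $H$ is smooth over $\C$, $\pi$ is an $H$-torsor, étale-locally trivial, so $G(\C)\to X(\C)$ is a locally trivial fibre bundle with fibre $H(\C)$, and the homotopy exact sequence reads
\[
\pi_1(G,e)\xrightarrow{\ j\ }\pi_1(X,x)\xrightarrow{\ p\ }\pi_0(H)\to 1,
\]
with $p$ surjective, $\ker p=\im j$, and $p$ the monodromy map (lift a loop to a path of $G(\C)$ starting at $e$; its endpoint lies in $H(\C)$ and $p$ of the class is the component of that endpoint). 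As $\Z(1)=\pi_1^\top(\G_m,1)$ is free of rank one on the standard loop $\sigma$, an element of $\pi_1^\top(X,x)(-1)$ amounts to an element $\gamma$ of $\pi_1^\top(X,x)$, and it is strictly algebraic precisely when $\gamma=f_*(\sigma)$ for some pointed morphism $f\colon(\G_m,1)\to(X,x)$. So I must realize every $\gamma\in\pi_1(X,x)$ in this way.

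I would do this in three steps. (a) \emph{Elements of $\im j$.} Fix a maximal torus $T\subseteq G$. From the fibration $T\to G\to G/T$ and the fact that $G/T$ is simply connected ($G/T$ fibres with contractible fibres $B/T\cong R_u(B)$ over the simply connected flag variety $G/B$), one gets that $\pi_1(T(\C))\to\pi_1(G(\C))$ is onto; via the canonical identification $X_*(T)=\pi_1(T(\C))$, every element of $\pi_1(G)$ is $\lambda_*(\sigma)$ for a cocharacter $\lambda\colon\G_m\to T\subseteq G$, and then $\pi\circ\lambda\colon(\G_m,1)\to(X,x)$ realizes $j(\lambda_*\sigma)$; so all of $\im j$ is realized. (c) \emph{Spreading over a whole coset of $\ker p$.} If $f_0$ is pointed and realizes $\gamma_0$ and $\lambda\colon\G_m\to G$ is a cocharacter, then $f(t):=\lambda(t)\cdot f_0(t)$ is again pointed at $x$, and applying $\pi_1$ to the torus map $\Phi(s,s')=\lambda(s)\cdot f_0(s')$, in view of $[\Delta]=[S^1\times\{1\}]+[\{1\}\times S^1]$ in $\pi_1(S^1\times S^1)$ (with $\Delta$ the diagonal), one gets $f_*(\sigma)=j(\lambda_*\sigma)\cdot\gamma_0$. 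Since by (a) $j(\lambda_*\sigma)$ runs over all of $\ker p=\im j$ as $\lambda$ varies, once one element over each class in $\pi_0(H)$ is realized, every $\gamma$ is: write $\gamma=(\gamma\gamma_0^{-1})\gamma_0$ with $\gamma\gamma_0^{-1}\in\ker p$.

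The central step, and the one I expect to be the real obstacle, is (b): for each $\bar\gamma\in\pi_0(H)$, construct a pointed morphism $\G_m\to X$ with monodromy $\bar\gamma$. First, each component of $H$ contains a semisimple element of finite order: start from any representative, pass to the semisimple part $h_s$ of its Jordan decomposition (the unipotent part lies in $H^0$), and use that $\overline{\langle h_s\rangle}$ is diagonalizable, hence over $\C$ isomorphic to a product (torus)$\times$(finite group), whose finite factor supplies such an element, in the same component. So pick $h\in H(\C)$ semisimple of order $N$ with $[h^{-1}]=\bar\gamma$. As $h$ is semisimple it lies in a maximal torus $T''$ of $G$, and $h\in T''[N]$, so there is $\mu\in X_*(T'')$ with $\mu(\zeta_N)=h$, where $\zeta_N:=\exp(2\pi i/N)$. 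Then the morphism $\G_m\to X$, $t\mapsto\pi(\mu(t)^{-1})$, is invariant under $t\mapsto\zeta_N t$, because $\pi\bigl(g\,\mu(\zeta_N)^{-1}\bigr)=\pi(g\,h^{-1})=\pi(g)$ since $h\in H$; hence it descends along $\G_m\xrightarrow{[N]}\G_m$ to a pointed morphism $f\colon\G_m\to X$. The loop $f\circ\sigma$ is lifted through $\pi$, starting at $e$, by the path $\theta\mapsto\mu\bigl(\exp(2\pi i\theta/N)\bigr)^{-1}$, which ends at $\mu(\zeta_N)^{-1}=h^{-1}\in H(\C)$; so the monodromy of $f$ is $[h^{-1}]=\bar\gamma$. (When $\bar\gamma\neq 1$ this $f$ does not lift to a morphism $\G_m\to G$, which is exactly why it has to be built by this descent and not as a mere composite $\G_m\to G\to X$.) Combining (a), (b), (c) finishes the proof. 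The delicate points to check in detail are the existence of a finite-order semisimple representative in each component of $H$ and the fact that the descent really yields an \emph{algebraic} morphism with the asserted monodromy; everything else is routine bookkeeping with the homotopy exact sequence.
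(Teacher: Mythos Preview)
Your proof is correct and rests on the same core idea as the paper's: use the fibration sequence $\pi_1(G)\to\pi_1(X)\to\pi_0(H)\to 1$, realize the image of $\pi_1(G)$ by cocharacters of a maximal torus, and for each component of $H$ pick a semisimple representative $h$ lying in some maximal torus $T$ of $G$.

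The execution differs. You pass further to a representative of \emph{finite} order $N$ (via the decomposition of $\overline{\langle h_s\rangle}$ as torus times finite group), construct $f\colon\G_m\to X$ explicitly by descent along $[N]\colon\G_m\to\G_m$, and then in step~(c) translate by cocharacters $\lambda$ of $G$ to cover the whole $\ker p$-coset. The paper instead lets $M=\overline{\langle h\rangle}\subset T$ (no finite order required), notes that $T/M$ is again a torus mapping naturally to $X=G/H$, and with a short diagram chase on the two fibration sequences (for $T\to T/M$ and $G\to G/H$) shows directly that the given element $p_1\in\pi_1(X,x)$ lies in the image of $\pi_1(T/M,1)\to\pi_1(X,x)$; since $T/M$ is a torus, $p_1$ is then strictly algebraic. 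Your descended map is precisely what a cocharacter of $T/M$ not lifting to $T$ gives when composed with $T/M\to X$, and your multiplication trick in~(c) corresponds to adding the cocharacters of $T/M$ that do lift. The paper's packaging is shorter and avoids both the finite-order reduction and the separate coset-sweeping step; your version has the merit of writing down the morphism $f$ and its monodromy completely explicitly.
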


\begin{proof}[D\'emonstration]
D'abord, on remarque que si $T$ est un $\C$-tore,
alors tout \'el\'ement de $\pi_1^\top(T,1)(-1)=T_*:=\Hom(\G_{m,\C},T)$ est alg\'ebrique.
Si $G$ est un groupe alg\'ebrique lin\'eaire connexe sur $\C$ et $T\subset G$ un tore maximal,
alors par \cite[Prop.~1.11 et Def. 1.3]{Bor-Memoir}
on a un isomorphisme canonique et fonctoriel en $G$\ \  $\pi_1^\alg(G)\to\pi_1^\top(G,1)(-1)$,
o\`u $\pi_1^\alg$ d\'esigne le groupe fondamental alg\'ebrique
d'un groupe  alg\'ebrique lin\'eaire, d\'efini dans \cite{Bor-Memoir}.
Par la d\'efinition de $\pi_1^\alg$, l'homomorphisme $\pi_1^\alg(T)\to\pi_1^\alg(G)$ est surjectif.
Ainsi l'homomorphisme  $\pi_1^\top(T,1)(-1)\to\pi_1^\top(G,1)(-1)$ est surjectif,
donc  tout \'el\'ement de $\pi_1^\top(G,1)(-1)$ est   alg\'ebrique.

Soit $G$ un  groupe lin\'eaire connexe et $X$ un espace homog\`ene de $G$.
Soit $x\in X(\C)$ et soit  $H\subset G$ le stabilisateur de $x$.
On ne suppose pas que $H$ est connexe.
On a un g\'en\'erateur $\xi$ de $\pi_1^\top(\C^\times)$  (d\'ependent du choix de $\ii$), et on peut  oublier la torsion par $(-1)$.
La fibration $G(\C)\to X(\C)$ de fibre $H(\C)$ donne
 une suite exacte
\begin{equation}\label{eq:fibration}
\pi_1^\top(G,1)\to \pi_1^\top(X,x)\to \pi_0(H)\to 1.
\end{equation}
Soit $p_1\in \pi_1^\top(X,x)$.
Montrons que $p_1$ est  alg\'ebrique.

On d\'esigne par $h_0$ l'image de $p_1$ dans $\pi_0(H)$.
Alors $h_0$ est un \'el\'ement semi-simple
et il est donc l'image d'un \'el\'ement semi-simple $h\in H(\C)$.
Par \cite[Thm.~22.2]{Humphreys}, on sait que $h\in T(\C)$ pour un certain tore maximal $T$ de $G$.
On d\'esigne par $M$  le sous-groupe ferm\'e de $H$ engendr\'e par $h$, alors $M\subset T$.
On a un diagramme commutatif exact
$$
\xymatrix{
1\ar[r]   &M\ar[r]\ar[d]     &T\ar[r]\ar[d]    &T/M \ar[r]\ar[d]    &1  \\
1\ar[r]   &H\ar[r]           &G\ar[r]          &G/H  \ar[r]         &1
}
$$
et un diagramme  commutatif exact induit de morphismes de groupes
$$
\xymatrix{
 \pi_1^\top(T,1)\ar[r]\ar[d]  &\pi_1^\top(T/M,1)\ar[r]\ar[d]   &\pi_0(M)\ar[r]\ar[d]  &1  \\
 \pi_1^\top(G,1)\ar[r]        &\pi_1^\top(G/H,x)\ar[r]         &\pi_0(H)\ar[r]        &1.
}
$$

On d\'esigne par $m_0$ l'image de $h\in M(\C)$ dans $\pi_0(M)$,
alors l'image de $m_0$ dans $\pi_0(H)$ est $h_0$.
On voit que  l'image $h_0$ de $p_1$ dans $\pi_0(H)$
est contenue  dans l'image de la fl\`eche verticale de droite.
Nous avons vu que la fl\`eche verticale de gauche est surjective.
Une chasse au diagramme facile montre alors que $p_1$ est contenu dans l'image de la fl\`eche verticale m\'ediane.
Mais $T/M$ est un tore, donc tous les \'el\'ements de $\pi_1^\top(T/M,1)$ sont  alg\'ebriques.
On conclut que  $p_1$ est  alg\'ebrique.
\end{proof}

\begin{corollary}\label{cor:l'image}
Soit $(X,x)$ comme dans la proposition \ref{prop:str-algebrique}.
Alors
$$
\pi_1^\et(X,x)(-1)_\alg=\varkappa(\pi_1^\top(X,x)(-1)).
$$
\end{corollary}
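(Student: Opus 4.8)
Le plan est de d\'eduire l'\'egalit\'e directement des deux propositions qui pr\'ec\`edent. La proposition \ref{prop:str-algebrique} affirme que tout \'el\'ement de $\pi_1^\top(X,x)(-1)$ est strictement alg\'ebrique, c'est-\`a-dire de la forme $f_*^\top$ pour un morphisme de vari\'et\'es point\'ees $f\colon(\G_{m,\C},1)\to(X,x)$. Comme le sous-groupe $\pi_1^\top(X,x)(-1)_\alg$ est par d\'efinition engendr\'e par de tels \'el\'ements, il contient tous les \'el\'ements du groupe, d'o\`u l'identit\'e
$$
\pi_1^\top(X,x)(-1)_\alg=\pi_1^\top(X,x)(-1).
$$

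Ensuite, j'applique la proposition \ref{prop:image} \`a la vari\'et\'e point\'ee $(X,x)$, qui fournit $\varkappa\bigl(\pi_1^\top(X,x)(-1)_\alg\bigr)=\pi_1^\et(X,x)(-1)_\alg$. En y substituant l'identit\'e obtenue ci-dessus, on en tire
$$
\pi_1^\et(X,x)(-1)_\alg=\varkappa\bigl(\pi_1^\top(X,x)(-1)\bigr),
$$
ce qui est l'\'enonc\'e cherch\'e.

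Il n'y a pas d'obstacle r\'eel dans ce corollaire : tout le travail a d\'ej\`a \'et\'e accompli dans la d\'emonstration de la proposition \ref{prop:str-algebrique} (r\'eduction, via la suite exacte de fibration \eqref{eq:fibration}, au cas o\`u $X=G$ est un groupe lin\'eaire connexe, puis \`a un tore maximal $T\subset G$ gr\^ace \`a la surjectivit\'e de $\pi_1^\alg(T)\to\pi_1^\alg(G)$ et \`a \cite[Prop.~1.11]{Bor-Memoir}). Le seul point \`a noter est que l'hypoth\`ese selon laquelle chaque \'el\'ement est strictement alg\'ebrique entra\^\i ne aussit\^ot que le groupe co\"\i ncide avec le sous-groupe engendr\'e par ces \'el\'ements, ce qui est imm\'ediat.
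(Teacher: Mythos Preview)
Your proof is correct and follows exactly the same approach as the paper: the paper's own proof simply states that the corollary follows from Proposition~\ref{prop:image} and Proposition~\ref{prop:str-algebrique}, and you have spelled out precisely how these two propositions combine. Your intermediate observation that Proposition~\ref{prop:str-algebrique} gives $\pi_1^\top(X,x)(-1)_\alg=\pi_1^\top(X,x)(-1)$, which is then substituted into Proposition~\ref{prop:image}, is the intended argument.
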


\begin{proof}[D\'emonstration]
 Le corollaire r\'esulte de la proposition \ref{prop:image}
et de la proposition \ref{prop:str-algebrique}.
\end{proof}

\begin{lemma}\label{lemme:injectif}
Soit $X$ un espace homog\`ene d'un groupe alg\'ebrique lin\'eaire connexe $G$ d\'efini sur $\C$.
Alors l'homomorphisme
$$
 \varkappa_0\colon \pi_1^\top(X,x)\to \pi_1^\et(X,x)
$$
est injectif.
\end{lemma}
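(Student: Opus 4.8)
The plan is to deduce the statement from the residual finiteness of the topological fundamental group $\pi_1^\top(X,x)$. By \S\,\ref{subsec:varkappa} the homomorphism $\varkappa$ identifies $\pi_1^\et(X,x)(-1)$ with the profinite completion of $\pi_1^\top(X,x)(-1)$, so $\varkappa$ is injective precisely when the canonical map from $\pi_1^\top(X,x)(-1)$ to its profinite completion is injective. Choosing a generator of $\Z(1)$ gives a (non-canonical) group isomorphism $\pi_1^\top(X,x)(-1)=\Hom(\Z(1),\pi_1^\top(X,x))\cong\pi_1^\top(X,x)$ compatible with profinite completions, so it suffices to prove that $\pi_1^\top(X,x)$ is residually finite.

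The next step is to pin down the structure of $\pi_1^\top(X,x)$. Put $H=\Stab_G(x)$. The locally trivial fibration $H(\C)\to G(\C)\to X(\C)$ used in \eqref{eq:fibration} yields an exact sequence $\pi_1^\top(G,1)\to\pi_1^\top(X,x)\to\pi_0(H)\to 1$, and by exactness the subgroup $Q:=\im[\pi_1^\top(G,1)\to\pi_1^\top(X,x)]$ coincides with $\ker[\pi_1^\top(X,x)\to\pi_0(H)]$, hence is normal of finite index at most $|\pi_0(H)|$. Moreover $\pi_1^\top(G,1)$ is a finitely generated abelian group: in characteristic $0$ the unipotent radical $G^\uu$ is isomorphic as a variety to an affine space, so $G(\C)$ is homotopy equivalent to $G^\red(\C)$, which deformation retracts onto a maximal compact subgroup, a compact Lie group with finitely generated abelian fundamental group. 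Consequently $Q$, being a quotient of $\pi_1^\top(G,1)$, is finitely generated abelian, and $\pi_1^\top(X,x)$ is a finitely generated group that is abelian-by-finite.

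It then remains to show that a finitely generated group $\Gamma$ containing a normal abelian subgroup $Q$ of finite index is residually finite; applied to $\Gamma=\pi_1^\top(X,x)$ this finishes the proof. Let $1\neq g\in\Gamma$. If $g\notin Q$, the finite quotient $\Gamma\onto\Gamma/Q\cong\pi_0(H)$ already separates $g$ from $1$. If $g\in Q$, observe that for each $n\ge1$ the subgroup $nQ$ is characteristic in $Q$, being the image of the multiplication-by-$n$ endomorphism; hence $nQ$ is normal in $\Gamma$, it has finite index in $\Gamma$ since $Q/nQ$ is finite, and $\bigcap_{n\ge1}nQ=0$ because $Q$ is finitely generated, so $g\notin nQ$ for some $n$, whereupon $\Gamma/nQ$ is a finite quotient separating $g$ from $1$. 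The only point needing a little care is this last case: one cannot use an arbitrary finite-index subgroup of $Q$ but must pick one that is characteristic in $Q$, so that it stays normal in $\Gamma$; everything else is routine.
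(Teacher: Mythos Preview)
Your proof is correct and follows essentially the same route as the paper's: reduce injectivity of $\varkappa$ to residual finiteness of $\Gamma=\pi_1^\top(X,x)$, use the fibration sequence \eqref{eq:fibration} to exhibit a finitely generated abelian subgroup of finite index, and conclude. The only differences are expository: the paper cites \cite[Prop.~1.11]{Bor-Memoir} for the structure of $\pi_1^\top(G,1)$ where you argue directly via the maximal compact subgroup, and the paper invokes the standard fact that residual finiteness passes up from a finite-index subgroup where you spell this out using the characteristic subgroups $nQ$.
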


\begin{proof}[D\'emonstration]
On pose $\Gamma=\pi_1^\top(X,x)$.
Il faut montrer que $\Gamma$ s'injecte dans sa compl\'etion profinie,
i.e. que $\Gamma$ est un groupe r\'esiduellement fini.
On rappelle qu'un groupe $A$ est dit r\'esiduellement fini si
l'intersection des sous-groupes normaux d'indice fini de $A$ est $\{1\}$,
ou, ce qui est \'equivalent, si l'intersection des sous-groupes d'indice fini de $A$ est $\{1\}$.

On d\'esigne par $\Delta$ l'image de $\pi_1^\top(G,1)$ dans $\Gamma=\pi_1^\top(X,x)$
dans la suite exacte \eqref{eq:fibration},
alors on a une suite exacte courte
$$
1\to\Delta\to\Gamma\to\pi_0(H)\to 1.
$$
Par \cite[Prop.~1.11]{Bor-Memoir}
$\pi_1^\top(G,1)$ est un groupe ab\'elien de type fini, donc $\Delta$ est un groupe ab\'elien de type fini,
donc $\Delta$ est r\'esiduellement fini.
Comme  $\Delta$ est un sous-groupe d'indice fini de $\Gamma$,
on conclut que $\Gamma$ est residuellement fini.
\end{proof}

\begin{corollary}\label{cor:injectif}
(i) Soit $X$ un espace homog\`ene d'un groupe alg\'ebrique lin\'eaire connexe $G$ d\'efini sur $\C$.
Alors   l'application
$$
\varkappa\colon \pi_1^\top(X,x)(-1)\to \pi_1^\et(X,x)(-1)
$$
est injective.

(ii) Si en plus on suppose  que le groupe $\pi_1^\et(X,x)$ est ab\'elien,
alors le groupe $\pi_1^\top(X,x)$ est  ab\'elien.
\end{corollary}

\begin{proof}[D\'emonstration]
(i)  On a un diagramme commutatif
$$
\xymatrix{
\pi_1^\top(X,x)(-1) \ar[r]^\varkappa\ar[d]_\sim   &\pi_1^\et(X,x)(-1)\ar[d]^\sim       \\
\pi_1^\top(X,x)     \ar[r]^{\varkappa_0}          &\pi_1^\et(X,x)
}
$$
o\`u les fl\`eches verticales sont applications bijectives.
Par le lemme \ref{lemme:injectif} l'homomorphisme $\varkappa_0$ est injectif,
donc l'application $\varkappa$ est injective.

(ii ) Comme le groupe $\pi_1^\et(X,x)$ est ab\'elien, par le lemme \ref{lemme:injectif} le groupe $\pi_1^\top(X,x)$ est ab\'elien.
\end{proof}

\begin{theorem}\label{thm:general-bis}
\begin{enumerate}
\item[(i)]
Soit $X$ un espace homog\`ene d'un groupe alg\'ebrique lin\'eaire connexe $G$ d\'efini sur $\C$.
Alors
 l'application $\varkappa\colon \pi_1^\top(X,x)(-1)\to \pi_1^\et(X,x)(-1)$
induit une bijection
$$
 \iota\colon \pi_1^\top(X,x)(-1)\isoto \pi_1^\et(X,x)(-1)_\alg\subset \pi_1^\et(X,x)(-1).
$$
\item[(ii)]  Si en plus on suppose que le groupe $\pi_1^\et(X,x)$ est ab\'elien,
alors le groupe $\pi_1^\top(X,x)$ est ab\'elien,
le sous-ensemble point\'e $\pi_1^\et(X,x)(-1)_\alg$ du groupe ab\'elien $\pi_1^\et(X,x)(-1)$ est un sous-groupe,
et l'homomorphisme $\varkappa\colon \pi_1^\top(X,x)(-1)\to \pi_1^\et(X,x)(-1)$
induit un isomorphisme de groupes ab\'eliens
\begin{equation}\label{eq:varkappa-injectif}
\iota\colon \pi_1^\top(X,x)(-1)\isoto \pi_1^\et(X,x)(-1)_\alg.
\end{equation}
\end{enumerate}
\end{theorem}

\begin{proof}
(i) Par le corollaire \ref{cor:injectif}(i)
l'application $\varkappa$ est injective,
et par le corollaire \ref{cor:l'image} son image est $\pi_1^\et(X,x)(-1)_\alg$\,.

(ii) Si en plus le groupe $\pi_1^\et(X,x)$ est ab\'elien, alors par le corollaire \ref{cor:injectif}(ii)
$\pi_1^\top(X,x)$ est aussi ab\'elien,
donc $\pi_1^\top(X,x)(-1)$ et $\pi_1^\et(X,x)(-1)$ sont canoniquement des groupes ab\'eliens,
et $\varkappa$ est un homomorphisme.
Par (i) l'image de l'homomorphism $\varkappa$ est $\pi_1^\et(X,x)(-1)_\alg$,
il est un sous-groupe, et encore par (i) $\varkappa$ induit un isomorphisme de groupes  \eqref{eq:varkappa-injectif}.
\end{proof}

\begin{corollary}\label{cor:anti-Serre-bis}
Soit $G$ un groupe alg\'ebrique lin\'eaire connexe  sur $\C$, et soit $X$ un espace homog\`ene  de $G$.
Soit $x\in X$ et soit $\tau\in\Aut(\C)$.
On suppose que le groupe $\pi_1^\et(X,x)$ est ab\'elien.
Alors  le  groupe fondamental topologique $\pi_1^\top(\tau X,\tau x)$ est canoniquement isomorphe  \`a   $\pi_1^\top(X,x)$.
\end{corollary}

\begin{proof}
On construit un isomorphisme canonique
\[
\pi_1^\top(X^\an,x)(-1) \isoto \pi_1^\top(\tau X^\an,\tau x)(-1)
\]
comme suit:
\[
\xymatrix{
\pi_1^\top(X^\an,x)(-1)\ar@{{}-->}[r]\ar[d]_\iota     &\pi_1^\top(\tau X^\an,\tau x)(-1)\ar[d]^{\tau\iota}\\
\pi_1^\et(X,x)(-1)_\alg \ar[r]^-{\tau_*}         & \pi_1^\et(\tau X,\tau x)(-1)_\alg\, ,
}
\]
 o\`u les fl\`eches verticales $\iota$ et $\tau\iota$ sont des isomorphismes de groupes ab\'eliens.
On choisit une unit\'e imaginaire $\ii\in\C$, alors on obtient
un g\'en\'erateur  $\xi_\ii$ de $\pi_1^\top(\G_{m,\C})$ et
un  isomorphisme compos\'e
$$
\lambda_{\tau,\ii}\colon \pi_1^\top(X^\an,x)\isoto\pi_1^\top(X^\an,x)(-1) \isoto \pi_1^\top(\tau X^\an,\tau x)(-1)\isoto\pi_1^\top(\tau X^\an,\tau x).
$$
Si on change $\ii$ en $-\ii$, alors  $\xi_\ii$ se change en $\xi_{-\ii}=-\xi_\ii$, et l'isomorphisme compos\'e $\lambda_{\tau,\ii}$ ne  se change pas.
Ainsi on obtient un isomorphisme canonique
$$
\lambda_\tau\colon \pi_1^\top(X^\an,x)\isoto \pi_1^\top(\tau X^\an,\tau x).
$$
\end{proof}

\begin{remark}
M\^eme si le groupe $\pi_1^\et(X,x)$ dans le corollaire \ref{cor:anti-Serre-bis} est non ab\'elien, on obtient tout de m\^eme une  bijection canonique d'ensembles point\'es
\begin{equation}\label{bijection-tau}
\pi_1^\top(X^\an,x)(-1)_\alg \isoto \pi_1^\top(\tau X^\an,\tau x)(-1)_\alg\,.
\end{equation}
Les bijections
$$
\pi_1^\top(X,x)\isoto \pi_1^\top(X,x)(-1)\isoto \pi_1^\et(X,x)(-1)_\alg
$$
d\'efinissent une structure de groupe sur l'ensemble point\'e $\pi_1^\et(X,x)(-1)_\alg$
d\'ependant de la topologie et de la structure complexe de $X(\C)$.
De m\^eme on obtient une  structure de groupe sur l'ensemble point\'e $\pi_1^\et(\tau X,\tau x)(-1)_\alg$
d\'ependant de la topologie et de la structure  complexe de $\tau X\,(\C)$.
Mais puisque l'automorphisme $\tau$ de $\C$ ne pr\'eserve pas en g\'en\'eral la topologie et l'unit\'e imaginaire de $\C$,
la bijection \eqref{bijection-tau} n'est pas en g\'en\'eral un isomorphisme de groupes pour ces structures de groupes.
\end{remark}

\def\Gbar{{G}}
\def\kbar{{k}}
\def\Hbar{{H}}
\def\xbar{{x}}
\def\ybar{{y}}
\def\zbar{{z}}
\def\wbar{{w}}

\section{Paires auxiliaires}\label{s:2}

Dans cette section on rappelle les constructions de groupes et d'espaces homog\`enes auxiliaires,
dont nous avons besoin pour notre d\'emonstration des th\'eor\`emes \ref{thm:main} et \ref{thm:charp non connexe}.
L'objectif est d'associer \`a un espace homog\`ene $X$ d'un $k$-groupe alg\'ebrique $G$ v\'erifiant les hypoth\`eses
des th\'eor\`emes \ref{thm:main} et \ref{thm:charp non connexe}, des espaces homog\`enes $Y$, $Z$ et $W$ de certains $k$-groupes
($G_Y$, $G_Z$ et $G_W$ respectivement), avec des morphismes  de paires
$$(G,X) \leftarrow (G_Y, Y) \rightarrow (G_Z, Z) \rightarrow (G_W, W) \, ,$$
qui vont permettre (dans les sections suivantes) de d\'emontrer les th\'eor\`emes \ref{thm:main}
et \ref{thm:charp non connexe} successivement
pour $W$, $Z$, $Y$ et enfin pour $X$.
On utilise pour cela des constructions de \cite{Bor96}, \cite{BCS} et \cite{BS}.

\begin{subsec} {\em Construction de l'espace homog\`ene $Y$.} \label{subsec:aux-Y}
Soit $X$ un espace homog\`ene d'un $k$-groupe alg\'ebrique lin\'eaire connexe lisse $G$
d\'efini sur un corps alg\'ebriquement clos $k$ de caract\'eristique quelconque.
On suppose que $\Pic(\Gbar)=0$.

On choisit un $\kbar$-point $\xbar\in X(\kbar)$. On note $\Hbar$ le stabilisateur de $\xbar$ dans $\Gbar$.
Pour l'\'etude du groupe fondamental de $X$, on peut supposer sans perte de g\'en\'eralit\'e que $\Hbar$ est lisse
(voir \cite{BrSz}, d\'ebut de la section 3).
On ne suppose pas en revanche que $\Hbar$ est connexe.

Soit $\Hbar^\mult$ le plus grand groupe quotient de $\Hbar$ qui est un groupe de type multiplicatif.
On pose $H^\kerchar=\ker[H\to H^\mult]$.
On a un homomorphisme canonique $H^\mult\to G^\tor$, qui n'est g\'en\'eralement pas injectif.

\def\multmap{{m}}

On choisit un plongement $j\colon H^\mult\into Q$
de $H^\mult$ dans un $k$-tore $Q$.
On consid\`ere le plongement
$$
j_*\colon H \to G\times_k Q, \qquad h \mapsto (h,j(\multmap(h))),
$$
o\`u $\multmap\colon H \to H^{\mult}$ est l'\'epimorphisme canonique.
On pose
$$
G_Y=G\times_k Q,\quad H_Y=j_*(H)\subset G_Y,\quad Y=G_Y/H_Y,\quad y=1\cdot H_Y\in Y(k).
$$

La projection $\pi\colon G_Y = G \times Q \to G$ satisfait $\pi(H_Y) = H$, et
elle induit une application $\pi_*\colon Y\to X$ telle que $\pi_*(y)=x$.
On voit  ais\'ement que $Y$ est un torseur sur  $X$ sous le tore $Q$.
On obtient un morphisme  de paires
$$
 (G_Y,Y)\to (G,X).
$$
 On remarque que l'homomorphisme  $H_Y^{\mult}\to G_Y^{\tor}$ est injectif, donc
$$
H_Y\cap G_Y^\ssu=H_Y^\kerchar\cong H^\kerchar.
$$
\end{subsec}

\def\nt{{\rm ssu}}
\begin{subsec} {\em Construction de l'espace homog\`ene $Z$.}
On pose $G_Z=G_Y^\tor=G_Y/G_Y^\nt$, o\`u $G_Y^\nt:=\ker[G_Y\to G_Y^\tor]$.
On a un homomorphisme canonique $\mu\colon G_Y\to G_Z$.
Alors $G_Z$ est un $k$-tore et on a $\widehat{G_Z}=\widehat{G_Y}$.

L'inclusion $i\colon H\into G$ induit un homomorphisme $i^\mult\colon H^\mult\to G^\mult = G^\tor$.
On obtient un plongement
$$
\iota\colon H^\mult\into G^\tor\times_k Q,\quad h\mapsto (i^\mult(h),j(h)).
$$
On pose
$$
Z=Y/G_Y^\nt=(G^\tor\times_k Q)/\iota(H^\mult),
$$
alors on a une application $\mu_*\colon Y\to Z$, dont  la fibre au-dessus du $k$-point
$\zbar:=\mu_*(\ybar)\in Z(\kbar)$ est isomorphe \`a
$$
G_Y^\ssu/(H_Y\cap G_Y^\ssu)\cong G^\ssu / H^\kerchar.
$$
La vari\'et\'e $Z$ est un espace  homog\`ene du tore $G_Z$
de stabilisateur $H_Z=H_Y^\mult\subset G_Y^\tor=G_Z$.
On remarque que
$$
\widehat{H_Z}=\widehat{H_Y^\mult}=\widehat{H_Y}.
$$
Enfin, on a un morphisme  de paires
$$
(G_Y,Y)\to (G_Z,Z).
$$
\end{subsec}

\begin{subsec} {\em Construction de l'espace homog\`ene $W$.}
On pose $G_W=G_Z/H_Z$, $W=Z$, $w=\zbar$,
alors $W$ est un espace principal homog\`ene du tore $G_W$.
On a un morphisme naturel  de paires
$$
(G_Z,Z)\to (G_W, W).
$$
\end{subsec}

\section{Le groupe fondamental topologique}\label{s:3}

Dans cette section on prouve le th\'eor\`eme \ref{thm:main},
le corollaire \ref{cor:main}, et le th\'eor\`eme \ref{thm:pi2}.
On utilise des constructions de \S\,\ref{s:2}.

\begin{subsec}\label{subsec:Ext-complex}
Soit $\Kbul$ un complexe born\'e dans une  cat\'egorie ab\'elienne $\AA$, et soit $B$ un objet de $\AA$.
On d\'efinit
$$
\Ext_\AA^i(\Kbul,B):=\Hom_{D^b(\AA)}(\Kbul,B[i]),
$$
o\`u $D^b(\AA)$ est la   cat\'egorie deriv\'ee des  complexes born\'es dans $\AA$,
et $B[i]$ est le complexe constitu\'e d'un objet $B$ en degr\'e $-i$.
Si $A$ est un objet de $\AA$, on a
$$
\Ext_\AA^i(A[0],B)=\Hom_{D^b(\AA)}(A[0],B[i])=:\Ext_\AA^i(A,B),
$$
voir \cite[Def.~III.5.3]{GM}.
Par d\'efinition $\Ext_\AA^0(A,B)=\Hom_\AA(A,B)$.

On consid\`ere la cat\'egorie des $\Z$-modules (groupes ab\'eliens),
et on \'ecrit $\Ext_\Z^i$ pour $\Ext$ dans cette cat\'egorie.
Soit $A$ un groupe ab\'elien.
On \'ecrit $A_\tors$ pour  le sous-groupe de torsion de $A$,
et on pose $A_\tf:=A/A_\tors$, alors $A_\tf$ est sans torsion.
Il est clair que $\Ext_\Z^0(A,\Z)=\Hom(A,\Z)=\Hom(A_\tf,\Z)$.
\end{subsec}

\begin{lemma}[bien connu]\label{lem:Ext1}
Soit  $A$ un groupe  ab\'elien de type fini, alors $\Ext_\Z^1(A,\Z)\cong\Hom(A_\tors,\Q/\Z)$.
\end{lemma}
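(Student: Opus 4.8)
The plan is to use the two-term injective resolution $0\to\Z\to\Q\to\Q/\Z\to0$ of $\Z$ in the category of abelian groups, together with the fact that $\Q$ and $\Q/\Z$ are divisible, hence injective $\Z$-modules. First I would reduce the computation to the torsion subgroup. In every application of this lemma in the paper the abelian group $A$ is finitely generated (it is a group of characters), so $A_\tf=A/A_\tors$ is free; hence the extension $0\to A_\tors\to A\to A_\tf\to0$ splits and $A\cong A_\tors\oplus A_\tf$. Since $\Ext_\Z^1(-,\Z)$ is additive and $\Ext_\Z^1(A_\tf,\Z)=0$ (a free module is projective), this gives $\Ext_\Z^1(A,\Z)\cong\Ext_\Z^1(A_\tors,\Z)$, so it suffices to treat the case where $A=T$ is a torsion group.

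Next I would compute $\Ext_\Z^1(T,\Z)$ for a torsion group $T$ directly from the resolution above. Applying the functor $\Hom(T,-)$ to $0\to\Z\to\Q\to\Q/\Z\to0$ yields the long exact sequence
$$0\to\Hom(T,\Z)\to\Hom(T,\Q)\to\Hom(T,\Q/\Z)\to\Ext_\Z^1(T,\Z)\to\Ext_\Z^1(T,\Q)\to\cdots.$$
Here $\Hom(T,\Z)=\Hom(T,\Q)=0$ because $T$ is torsion while $\Z$ and $\Q$ are torsion-free, and $\Ext_\Z^1(T,\Q)=0$ because $\Q$ is injective; therefore the connecting homomorphism $\Hom(T,\Q/\Z)\isoto\Ext_\Z^1(T,\Z)$ is an isomorphism (and, as a connecting map, it is functorial in $T$). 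Combining with the reduction of the first step gives the asserted isomorphism $\Ext_\Z^1(A,\Z)\cong\Hom(A_\tors,\Q/\Z)$.

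I do not expect a genuine obstacle: once one knows that $\Q$ and $\Q/\Z$ are injective $\Z$-modules the whole argument is purely formal, which is presumably why the lemma is labelled \emph{bien connu}. The only point that really needs attention is the freeness of $A_\tf$ used in the first step, which is where finite generation of $A$ enters. Without that hypothesis the $\Ext$-long exact sequence of $0\to A_\tors\to A\to A_\tf\to0$ (using $\Hom(A_\tors,\Z)=0$ and $\Ext_\Z^2=0$) gives only a short exact sequence $0\to\Ext_\Z^1(A_\tf,\Z)\to\Ext_\Z^1(A,\Z)\to\Ext_\Z^1(A_\tors,\Z)\to0$ with $\Ext_\Z^1(A_\tors,\Z)\cong\Hom(A_\tors,\Q/\Z)$ by the second step, and the left-hand term vanishes precisely when $A_\tf$ is free.
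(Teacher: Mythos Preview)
Your argument is correct and rests on the same ingredient as the paper's \emph{Id\'ee de la preuve}: the injective resolution $0\to\Z\to\Q\to\Q/\Z\to0$. The organizational difference is that the paper does not split off $A_\tors$ first; it computes $\Ext^1_\Z(A,\Z)$ directly as $\coker[\Hom(A,\Q)\to\Hom(A,\Q/\Z)]$, rewrites $\Hom(A,\Q)$ as $\Hom(A_\tf,\Q)$, and then identifies the cokernel with $\Hom(A,\Q/\Z)/\Hom(A_\tf,\Q/\Z)\cong\Hom(A_\tors,\Q/\Z)$ using the injectivity of $\Q/\Z$. That last step tacitly uses that $\Hom(A_\tf,\Q)\to\Hom(A_\tf,\Q/\Z)$ is surjective, i.e.\ that $\Ext^1_\Z(A_\tf,\Z)=0$ --- precisely the point you isolate. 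So your remark about finite generation is well taken and applies equally to the paper's sketch; in all applications in the paper $A$ is a character group, hence finitely generated, and both arguments go through.
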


\begin{proof}[Id\'ee de la preuve]
En utilisant la r\'esolution injective de $\Z$
$$
0\to\Z\to\Q\to\Q/\Z\to 0,
$$
on montre que
$$
\Ext^1_\Z(A,\Z)\cong\coker[\Hom(A,\Q)\to\Hom(A,\Q/\Z)],
$$
mais
\begin{align*}
\coker[\Hom(A,\Q)\to\Hom(A,\Q/\Z)]=\coker[\Hom(A_\tf,\Q)\to \Hom(A,\Q/\Z)]&\\
                      =\Hom(A,\Q/\Z)/\Hom(A_\tf,\Q/\Z)=\Hom(A_\tors,\Q/\Z)&.
\end{align*}
\end{proof}

\begin{theorem}\label{thm:main-bis}
Soit $X$ un espace homog\`ene d'un groupe alg\'ebrique lin\'eaire connexe $G$
sur $\C$.
Soit $x\in X(\C)$,
on pose $H=\mathrm{Stab}_G(x)$.
On suppose que $\Pic(G)=0$ et que $H^\kerchar$ est connexe.
Alors il existe un isomorphisme canonique de groupes ab\'eliens
$$\pi_1^\top(X,x)(-1)\isoto\Ext^0_\Z(\Ghat\to\Hhat,\Z).$$
\end{theorem}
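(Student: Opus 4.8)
Le plan est de se ramener, en utilisant les paires auxiliaires de la \S\,\ref{s:2}, à des cas de plus en plus simples, puis de traiter ces cas élémentaires directement. La stratégie consiste à construire l'isomorphisme annoncé pour $W$, puis pour $Z$, puis pour $Y$, et enfin pour $X$, en suivant les morphismes de paires $(G,X) \leftarrow (G_Y,Y) \rightarrow (G_Z,Z) \rightarrow (G_W,W)$. Il faut donc (a) établir l'énoncé dans les cas de base, (b) comprendre comment $\pi_1^\top(-)(-1)$ et $\Ext^0_\Z(\widehat{G_{(-)}}\to\widehat{H_{(-)}},\Z)$ se transforment le long de chacun des trois morphismes de paires, et (c) vérifier la compatibilité (canonicité) des isomorphismes obtenus. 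Remarquons d'emblée que l'hypothèse ``$H^\kerchar$ connexe'' est préservée par la construction de $Y$ (on a $H_Y^\kerchar\cong H^\kerchar$) et devient triviale pour $Z$ et $W$, puisque leurs groupes ambiants sont des tores.

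D'abord, je traiterais le cas où $G$ est un tore $T$ et $X$ un espace principal homogène sous $T$ (le cas de $W$): alors $X(\C)$ est un torseur sur $T(\C)\simeq (\C^\times)^n$, donc homotopiquement équivalent à $T(\C)$, d'où $\pi_1^\top(X,x)(-1)\cong T_*=\Hom(\That,\Z)=\Ext^0_\Z(\That\to 0,\Z)$, ce qui est bien $\Ext^0_\Z(\widehat{G_W}\to\widehat{H_W},\Z)$ puisque $H_W$ est trivial. Ensuite, pour $Z$, espace homogène d'un tore $G_Z$ à stabilisateur $H_Z$ de type multiplicatif: la suite exacte $1\to H_Z\to G_Z\to G_Z/H_Z\to 1$ donne une fibration, $X=Z\to W=Z/H_Z$ n'est pas le bon point de vue; plutôt, de la suite exacte de groupes de caractères $0\to\widehat{G_Z/H_Z}\to\widehat{G_Z}\to\widehat{H_Z}\to 0$ et de la fibration topologique on déduit une suite exacte calculant $\pi_1^\top(Z,z)(-1)$ qui se compare terme à terme à la suite longue d'hypercohomologie de $\Ext^0_\Z(\widehat{G_Z}\to\widehat{H_Z},\Z)$; ici intervient le Lemme \ref{lem:Ext1} pour identifier les termes de torsion, via $\pi_0(H_Z)(-1)\cong\Ext^1_\Z(\widehat{H_Z},\widehat{G_Z})$ ou un énoncé analogue.

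Pour passer de $Z$ à $Y$, on utilise que $\mu_*\colon Y\to Z$ est un fibré dont la fibre est $G^\ssu/H^\kerchar$ avec $G^\ssu$ connexe sans caractères et $H^\kerchar$ connexe: le point clé est que cette fibre a un $\pi_1^\top(-)(-1)$ qui s'exprime uniquement par les tores, et comme $\widehat{G_Y}=\widehat{G_Z}$ et $\widehat{H_Y}=\widehat{H_Z}$, le complexe $[\widehat{G_Y}\to\widehat{H_Y}\rangle$ est inchangé; il faut donc montrer que $\mu_*$ induit un isomorphisme sur $\pi_1^\top(-)(-1)$, ce qui résulte de la suite exacte de fibration jointe au fait que $\pi_1^\top(G^\ssu/H^\kerchar)(-1)$ est engendré par l'image d'un tore maximal et se tue dans $\pi_1^\top(Y,y)(-1)$ modulo l'image de $G_Y^\tor$ --- c'est ici qu'intervient de manière essentielle l'hypothèse $\Pic(G)=0$ (donc $\Pic(G_Y)=0$), via \cite{Bor-Memoir}, pour contrôler $\pi_1^\alg$ de $G^\ssu$. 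Enfin, de $Y$ à $X$: puisque $Y$ est un torseur sous le tore $Q$ au-dessus de $X$, la suite exacte de fibration donne $T_*^Q=Q_*\to\pi_1^\top(Y,y)(-1)\to\pi_1^\top(X,x)(-1)\to 0$ (la fibre $Q(\C)$ étant connexe), et côté $\Ext$ on a la suite exacte courte de complexes $0\to[\widehat{G}\to\widehat{H}\rangle\to[\widehat{G_Y}\to\widehat{H_Y}\rangle\to\widehat{Q}[0]\to 0$ (car $\widehat{G_Y}=\widehat{G}\oplus\widehat{Q}$ et $\widehat{H_Y}=\widehat{H}$, l'application $\widehat{Q}\to\widehat{H_Y}=\widehat{H}$ étant $j^*\circ(\text{restriction})$), d'où par la suite longue $\Ext^0_\Z(\widehat{G_Y}\to\widehat{H_Y},\Z)\to\Ext^0_\Z(\widehat{G}\to\widehat{H},\Z)$ surjectif de noyau image de $Q_*$; comparer les deux suites exactes achève la récurrence. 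La compatibilité de toutes ces identifications --- c'est-à-dire que les carrés reliant les suites exactes topologiques et les suites d'$\Ext$ commutent --- est l'obstacle principal: il faut vérifier que l'isomorphisme fonctoriel $\pi_1^\alg(G)\cong\pi_1^\top(G,1)(-1)$ de \cite{Bor-Memoir}, le morphisme de bord des fibrations et les connectants des suites longues d'$\Ext$ s'organisent de façon cohérente. Je m'attends à ce que l'essentiel du travail technique réside dans la construction explicite, dans chaque cas de base, de l'isomorphisme canonique et dans la vérification de sa naturalité par rapport aux morphismes de paires.
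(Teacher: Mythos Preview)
Your overall plan coincides with the paper's: use the auxiliary pairs $(G_W,W)$, $(G_Z,Z)$, $(G_Y,Y)$ from \S\ref{s:2}, establish the isomorphism for $W$, then transport it along $W\leftarrow Z\leftarrow Y\to X$. The treatment of $W$ and the passage $Y\to X$ via the $Q$-torsor and the short exact sequence of complexes are exactly as in the paper.

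Two simplifications you are missing make the argument much shorter than you anticipate:

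\emph{Step $Z$.} By construction $W=Z$ as varieties (only the acting group changes: $G_W=G_Z/H_Z$), so there is nothing to prove on the topological side. On the algebraic side, since $H_Z\subset G_Z$ is a closed multiplicative subgroup of a torus, the sequence $0\to\widehat{G_W}\to\widehat{G_Z}\to\widehat{H_Z}\to 0$ is exact, so $\widehat{G_W}\to[\widehat{G_Z}\to\widehat{H_Z}\rangle$ is a quasi-isomorphism and $\Ext^0_\Z$ is unchanged. No comparison of long exact sequences is needed here.

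\emph{Step $Y$.} This is where your proposal has a genuine gap. You want to show that $\mu_*\colon\pi_1^\top(Y,y)\to\pi_1^\top(Z,z)$ is an isomorphism, and you propose to argue that the image of $\pi_1^\top(G^\ssu/H^\kerchar)$ in $\pi_1^\top(Y)$ ``se tue modulo l'image de $G_Y^\tor$''. But being zero modulo something does not give injectivity of $\mu_*$; you need the image of the fibre's $\pi_1$ in $\pi_1^\top(Y)$ to vanish outright. The paper's observation is that the fibre itself is simply connected: the hypothesis $\Pic(G)=0$ forces $G^\sss$ (hence $G^\ssu$) to be simply connected, so $\pi_1^\top(G^\ssu)=1$; then the fibration $G^\ssu\to G^\ssu/H^\kerchar$ with \emph{connected} fibre $H^\kerchar$ gives
\[
1=\pi_1^\top(G^\ssu)\to\pi_1^\top(G^\ssu/H^\kerchar)\to\pi_0(H^\kerchar)=1,
\]
hence $\pi_1^\top(G^\ssu/H^\kerchar)=1$, and $\mu_*$ is an isomorphism immediately. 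This is precisely where both hypotheses $\Pic(G)=0$ and ``$H^\kerchar$ connexe'' are used, and it removes all the delicate torus-chasing you anticipate.

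Finally, do not forget to check that the resulting isomorphism is independent of the choice of embedding $j\colon H^\mult\hookrightarrow Q$; the paper does this by comparing two choices via their diagonal into $Q\times Q'$.
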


\begin{subsec}\label{subsec:case-W}
Prouvons le th\'eor\`eme \ref{thm:main-bis}.
On traite d'abord le cas d'un espace homog\`ene principal $W$ d'un $k$-tore $G_W$.
Soit $w\in W(\C)$ un $\C$-point.
L'application $G_W\to W$ d\'efinie par $g\mapsto g\cdot w$ est un isomorphisme de $\C$-vari\'et\'es,
et on a  un isomorphisme induit
$$
\pi_1^\top(G_W,1)(-1)\isoto\pi_1^\top(W,w)(-1).
$$
Comme
$$
\pi_1^\top(G_W,1)(-1)=G_{W*}=\Hom(\widehat{G_W},\Z)=\Ext_\Z^0(\widehat{G_W},\Z),
$$
on obtient un isomorphisme canonique
$$
\pi_1^\top(W,w)(-1)\isoto\Ext^0_\Z(\widehat{G_W},\Z).
$$
Ceci prouve le th\'eor\`eme \ref{thm:main-bis} pour $(G_W,W)$.
\end{subsec}

\begin{subsec}\label{subsec:diagram}
On suppose qu'on a un homomorphisme de $\C$-tores $\gamma_\alpha\colon G_{W'}\to G_W$
et une application $\gamma_\alpha$-\'equivariante d'espaces homog\`enes principaux
$\alpha\colon W'\to W$ envoyant un $\C$-point $w'\in W'(\C)$ sur un $\C$-point $w\in W(\C)$.
Alors le diagramme suivant  commute clairement :
$$
\xymatrix{
\pi_1^\top(W',w')(-1)\ar[d]_\cong \ar[r]^{\alpha_*} &\pi_1^\top(W,w)(-1)\ar[d]^\cong \\
\Ext^0_\Z(\widehat{G_{W'}},\Z)  \ar[r]^{\gamma_{\alpha *}}   &\Ext^0_\Z(\widehat{G_{W}},\Z),
}
$$
o\`u les fl\`eches verticales sont les isomorphismes canoniques de  \S\,\ref{subsec:case-W}.
\end{subsec}

\begin{subsec}\label{subsec:case-Z}
 On a $Z=W$ et $G_Z/H_Z=G_W$, et le morphisme  \'evident de complexes
$\widehat{G_W}\to [\widehat{G_Z}\to\widehat{H_Z}\rangle$
est un  quasi-isomorphisme, donc
$$
\pi_1^\top(Z,z)(-1)=\pi_1^\top(W,w)(-1)=\Ext_\Z^0(\widehat{G_W},\Z)
=\Ext^0_\Z([\widehat{G_Z}\to\widehat{H_Z}\rangle,\Z)
$$
et on obtient un isomorphisme canonique $\pi_1^\top(Z,z)(-1)\isoto\Ext^0_\Z([\widehat{G_Z}\to\widehat{H_Z}\rangle,\Z)$.
Ceci prouve le th\'eor\`eme \ref{thm:main-bis} pour $(G_Z,Z)$.
\end{subsec}

\begin{subsec}\label{subsec:case-Y}
On a une  fibration $G^\ssu(\C)\to G^\ssu(\C)/H^\kerchar(\C)$ de fibre connexe $H^\kerchar(\C)$,
donc  on a une  suite exacte de fibration
$$
1=\pi_1^\top(G^\ssu)\to \pi_1^\top(G^\ssu/H^\kerchar)\to\pi_0(H^\kerchar)=1
$$
(ici $\pi_1^\top(G^\ssu)=1$ parce que $\Pic(G)=0$).
On voit que $\pi_1^\top(G^\ssu/H^\kerchar)=1$.
Or on a une fibration $\mu_*\colon Y(\C) \to Z(\C)$ de fibre
$G^\ssu(\C)/H^\kerchar(\C)$, donc on a une suite exacte de fibration
$$
1=\pi_1^\top(G^\ssu/H^\kerchar)\to\pi_1^\top(Y,y)\labelto{\mu_*}\pi_1^\top(Z,z)\to\pi_0(G^\ssu/H^\kerchar)=1,
$$
Il en r\'esulte que  l'homomorphisme $\pi_1^\top(Y,y)\labelto{\mu_*}\pi_1^\top(Z,z)$ est un isomorphisme,
donc l'homomorphisme $\pi_1^\top(Y,y)(-1)\labelto{\mu_*}\pi_1^\top(Z,z)(-1)$ est un isomorphisme.
Comme $\widehat{G_Y}=\widehat{G_Z}$ et $\widehat{H_Y}=\widehat{H_Z}$, on d\'eduit le th\'eor\`eme \ref{thm:main-bis} pour $(G_Y,Y)$
du th\'eor\`eme \ref{thm:main-bis} pour $(G_Z,Z)$.
\end{subsec}

\def\fone{{ \framebox{\makebox[\totalheight]{1}} }}
\def\ftwo{{ \framebox{\makebox[\totalheight]{2}} }}
\def\fthree{{ \framebox{\makebox[\totalheight]{3}} }}
\def\ffour{{ \framebox{\makebox[\totalheight]{4}} }}

\begin{subsec}\label{subsec:case-X}
On a un torseur $\pi_*\colon Y\to X$  sous le tore $Q$,
d'o\`u on obtient une suite exacte
$$
\pi_1^\top(Q,1)(-1)\labelto{\lambda_*}\pi_1^\top(Y,y)(-1)\labelto{\pi_*}\pi_1^\top(X,x)(-1)\to 0,
$$
o\`u la fl\`eche $\lambda_*$ est induite par l'application
$$
\lambda\colon Q\to Y,\ q\mapsto q\cdot y.
$$
On a une  suite exacte de complexes
$$
0\to (\Ghat\to\Hhat)\to (\widehat{G_Y}\to\Hhat)\to(\widehat{Q}\to 0)\to 0,
$$
d'o\`u on obtient une suite exacte
\begin{equation}\label{eq:exact1}
\Ext^0_\Z(\widehat{Q},\Z)\to \Ext^0_\Z(\widehat{G_Y}\to\Hhat,\Z)\to   \Ext^0_\Z(\Ghat\to\Hhat,\Z)\to  0
\end{equation}
(car d'apr\`es le lemme \ref{lem:Ext1}, $\Ext^1_\Z(\widehat{Q},\Z)=\Hom(\widehat{Q}_\tors,\Q/\Z)=0$).
On obtient un diagramme avec des lignes exactes
\begin{equation}\label{eq:Q}
\xymatrix{
{\pi}_1^\top(Q,1)(-1) \ar[r]^{\lambda_*} \ar[d]^\cong\ar@{}[dr]|{\fone}
     &\pi_1^\top(Y,y)(-1)\ar[r]^{\pi_*}\ar[d]^\cong\ar@{}[dr]|{\ftwo}  &\pi_1^\top(X,x)(-1)\ar[r]\ar@{-->}[d]      &0 \\
\Ext^0_\Z(\widehat{Q},\Z)\ar[r]          &\Ext^0_\Z(\widehat{G_Y}\to\widehat{H_Y},\Z)\ar[r]   &\Ext^0_\Z(\widehat{G}\to\widehat{H},\Z)\ar[r]  &0\, .
}
\end{equation}

On montre que le rectangle $\fone$ commute.
On consid\`ere le diagramme
$$
\xymatrix{
{\pi}_1^\top(Q,1)(-1) \ar[r]^{\lambda_*} \ar[d]^\cong\ar@{}[dr]|{\fone}   &\pi_1^\top(Y,y)(-1)\ar[r]^\cong\ar[d]^\cong\ar@{}[dr]|{\fthree}
                                    &\pi_1^\top(Z,z)(-1)\ar[r]^\cong\ar[d]^\cong \ar@{}[dr]|{\ffour}      &\pi_1^\top(W,w)(-1)\ar[d]^\cong   \\
\Ext^0_\Z(\widehat{Q},\Z)\ar[r]          &\Ext^0_\Z(\widehat{G_Y}\to\widehat{H_Y},\Z)\ar[r]^\cong
&\Ext^0_\Z(\widehat{G_Z}\to\widehat{H_Z},\Z)\ar[r]^\cong  &\Ext^0_\Z(\widehat{G_W},\Z) .
}
$$
Par construction, les rectangles $\fthree$ et $\ffour$ commutent.
D'apr\`es \S\,\ref{subsec:diagram} le grand rectangle $\fone\cup\fthree\cup\ffour$ commute.
Il en r\'esulte que le rectangle $\fone$ commute.

Dans le diagramme exact \eqref{eq:Q}, le rectangle $\fone$ commute,
ce qui permet de d\'efinir la fl\`eche en  pointill\'es faisant commuter le rectangle $\ftwo$.

Ainsi  on obtient un isomorphisme
\begin{equation}\label{eq:iso}
 \pi_1^\top(X,x)(-1)\isoto \Ext^0_\Z(\widehat{G}\to\widehat{H},\Z),
\end{equation}
qui {\em a priori} peut d\'ependre du choix d'un plongement $j\colon H^\mult\into Q$.
\end{subsec}

\begin{subsec} \label{subsec:independant}
Dans  \S\,\ref{subsec:aux-Y}
le torseur $Y\to X$ a \'et\'e construit \`a partir d'un plongement $j\colon H^\mult\into Q$.
Si on choisit un autre plongement $j'\colon H^\mult\into Q'$, on obtient un autre torseur $Y'\to X$ sous $Q'$.
On pose $Q''=Q\times_k Q'$, et on note $j''\colon H^\mult\into Q''$ le plongement diagonal.
On obtient un torseur $Y''\to X$ sous $Q''$ dominant  \`a la fois $Y$ et $Y'$,
et on en d\'eduit facilement que l'isomorphisme \eqref{eq:iso}
ne d\'epend pas du choix du plongement  $j\colon H^\mult\into Q$.
Ceci conclut la preuve du th\'eor\`eme \ref{thm:main-bis}.
\qed
\end{subsec}

\begin{corollary}\label{cor:main-bis}
Sous les hypoth\`eses du th\'eor\`eme \ref{thm:main-bis}

(a) on a une suite exacte canonique
\begin{equation}\label{eq:sequence-exacte}
\Hom(\Hhat,\Z)\labelto{i_*}\Hom(\Ghat,\Z)\to\pi_1^\top(X,x)(-1)\to\pi_0(H)(-1)\to 0,
\end{equation}
o\`u $i\colon H\hookrightarrow G$ est l'homomorphisme d'inclusion;

(b) si en plus le sous-groupe $H$ est connexe,
alors la suite exacte \eqref{eq:sequence-exacte} induit un isomorphisme canonique
$$
\coker [H^\tor_*\labelto{i_*}G^\tor_*] \isoto \pi^\top_1(X,x)(-1).
$$
\end{corollary}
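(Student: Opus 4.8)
Le plan est de d\'eduire le corollaire du th\'eor\`eme \ref{thm:main-bis}, qui fournit un isomorphisme canonique $\pi_1^\top(X,x)(-1)\isoto\Ext^0_\Z(\Ghat\to\Hhat,\Z)$ ; tout revient donc \`a analyser ce dernier groupe. Pour cela je partirais de la suite exacte courte de complexes de groupes ab\'eliens
$$
0\to\Hhat[-1]\to[\Ghat\labelto{i^*}\Hhat\rangle\to\Ghat[0]\to0,
$$
o\`u $\Hhat[-1]$ d\'esigne le complexe ayant $\Hhat$ en degr\'e $1$ et $\Ghat[0]$ celui ayant $\Ghat$ en degr\'e $0$, et j'appliquerais le foncteur contravariant $\Ext^i_\Z(-,\Z)$ au sens de \S\,\ref{subsec:Ext-complex}. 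On obtient alors une suite exacte longue dont le morceau utile est
$$
\Hom(\Hhat,\Z)\labelto{i_*}\Hom(\Ghat,\Z)\to\Ext^0_\Z(\Ghat\to\Hhat,\Z)\to\Ext^1_\Z(\Hhat,\Z)\to\Ext^1_\Z(\Ghat,\Z),
$$
le premier morphisme \'etant le morphisme de connexion, qui au niveau des complexes est induit par la diff\'erentielle $i^*\colon\Ghat\to\Hhat$ et s'identifie donc \`a $i_*$.

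Pour (a), il resterait \`a calculer les deux termes $\Ext^1$. Comme $G$ est connexe, $\Ghat=\widehat{G^\tor}$ est sans torsion (groupe de caract\`eres d'un tore), d'o\`u $\Ext^1_\Z(\Ghat,\Z)=0$ par le lemme \ref{lem:Ext1} ; en particulier le morphisme $\Ext^0_\Z(\Ghat\to\Hhat,\Z)\to\Ext^1_\Z(\Hhat,\Z)$ est surjectif. Par ailleurs $\Hhat=\widehat{H^\mult}$, et comme $H^\mult$ est de type multiplicatif on a $\Hhat_\tors=\widehat{\pi_0(H^\mult)}$ ; l'hypoth\`ese que $H^\kerchar$ est connexe donne $\pi_0(H^\mult)=\pi_0(H)$ (groupe ab\'elien, \'etant un quotient du groupe commutatif $H^\mult$), donc, de nouveau par le lemme \ref{lem:Ext1},
$$
\Ext^1_\Z(\Hhat,\Z)=\Hom(\Hhat_\tors,\Q/\Z)=\Hom(\widehat{\pi_0(H)},\Q/\Z)=\pi_0(H)(-1).
$$
En reportant ces deux calculs dans la suite exacte pr\'ec\'edente et en composant avec l'isomorphisme du th\'eor\`eme \ref{thm:main-bis}, on obtiendra la suite exacte \eqref{eq:sequence-exacte} ; elle est canonique car construite \`a partir de donn\'ees canoniques.

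Pour (b), je sp\'ecialiserais au cas $H$ connexe. Alors $H^\kerchar$ est connexe (voir la remarque pr\'ec\'edant le th\'eor\`eme \ref{thm:main}), de sorte que (a) s'applique ; de plus $\pi_0(H)=1$, donc $\pi_0(H)(-1)=0$, et $H^\mult=H^\tor$ (le plus grand quotient de type multiplicatif d'un groupe connexe co\"\i ncide avec son plus grand quotient torique). La suite \eqref{eq:sequence-exacte} se r\'eduit alors \`a
$$
\Hom(\widehat{H^\tor},\Z)\labelto{i_*}\Hom(\widehat{G^\tor},\Z)\to\pi_1^\top(X,x)(-1)\to0,
$$
c'est-\`a-dire \`a $H^\tor_*\labelto{i_*}G^\tor_*\to\pi_1^\top(X,x)(-1)\to0$ apr\`es l'identification $\Hom(\widehat T,\Z)=T_*$ valable pour tout tore $T$ ; on en tirera l'isomorphisme canonique annonc\'e $\coker[H^\tor_*\labelto{i_*}G^\tor_*]\isoto\pi_1^\top(X,x)(-1)$.

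Je m'attends \`a ce que la principale difficult\'e ne soit pas le calcul des groupes $\Ext$ (routinier, via le lemme \ref{lem:Ext1}), mais le contr\^ole de la fonctorialit\'e : il faut v\'erifier que le morphisme de connexion de la suite exacte longue est bien le morphisme $i_*$ induit par l'inclusion $i\colon H\into G$, et que cela reste compatible avec l'isomorphisme canonique du th\'eor\`eme \ref{thm:main-bis} et avec les identifications naturelles $\Hom(\Ghat,\Z)=G^\tor_*$ et (lorsque $H$ est connexe) $\Hom(\Hhat,\Z)=H^\tor_*$. C'est la fonctorialit\'e en $(G,X)$ — donc en la paire $(G,H)$, et en le complexe $[\Ghat\to\Hhat\rangle$ — de toute la construction men\'ee dans la preuve du th\'eor\`eme \ref{thm:main-bis} qui assure ce point.
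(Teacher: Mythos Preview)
Votre d\'emonstration est correcte et suit essentiellement la m\^eme approche que celle de l'article : on part de la m\^eme suite exacte courte de complexes $0\to\Hhat[-1]\to[\Ghat\to\Hhat\rangle\to\Ghat[0]\to 0$, on \'ecrit la suite exacte longue des $\Ext_\Z^i(-,\Z)$, on calcule $\Ext^1_\Z(\Ghat,\Z)=0$ et $\Ext^1_\Z(\Hhat,\Z)=\pi_0(H)(-1)$ via le lemme \ref{lem:Ext1}, puis on substitue gr\^ace au th\'eor\`eme \ref{thm:main-bis}. Vous apportez m\^eme un peu plus de soin que l'article sur certains points (identification du morphisme de connexion avec $i_*$, passage par $H^\mult$ pour calculer $\Hhat_\tors$), mais votre dernier paragraphe sur la fonctorialit\'e est une pr\'ecaution superflue : la suite exacte de l'\'enonc\'e est \emph{d\'efinie} comme la suite des $\Ext$ compos\'ee avec l'isomorphisme du th\'eor\`eme, il n'y a donc aucune compatibilit\'e suppl\'ementaire \`a v\'erifier.
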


\begin{proof}
La suite exacte courte de complexes
$$
0\to [0\to \Hhat\rangle \to[\Ghat\to\Hhat\rangle\to [\Ghat\to 0\rangle\to 0
$$
induit une suite exacte longue
\begin{equation}\label{eq:exacte-preuve}
\Ext_\Z^0(\Hhat,\Z)\to \Ext_\Z^0(\Ghat,\Z)\to \Ext_\Z^0(\Ghat\to\Hhat, \Z)\to\Ext_\Z^1(\Hhat,\Z)\to\Ext_\Z^1(\Ghat,\Z).
\end{equation}
On a $\Ext_\Z^0(\Hhat,\Z)=\Hom(\Hhat, \Z)$ et $\Ext_\Z^0(\Ghat,\Z)=\Hom(\Ghat,\Z)$.
D'apr\`es le lemme \ref{lem:Ext1}, on a $\Ext_\Z^1(\Ghat,\Z)=\Hom(\Ghat_\tors,\Q/\Z)=0$ et
$$
\Ext_\Z^1(\Hhat,\Z)=\Hom(\Hhat_\tors,\Q/\Z)=\Hom_\Z(\Hom_\C(\pi_0(H),\G_{m,\C}),\Q/\Z)=\pi_0(H)(-1).
$$
D'apr\`es le th\'eor\`eme \ref{thm:main-bis} on peut \'ecrire $\pi_1^\top(X,x)(-1)$
au lieu de $\Ext_\Z^0(\Ghat\to\Hhat, \Z)$ dans \eqref{eq:exacte-preuve}.
Ceci prouve l'assertion (a) du corollaire; l'assertion (b) en r\'esulte imm\'ediatement.
\end{proof}

\begin{theorem}\label{thm:pi2-bis}
Soit $X$ un espace homog\`ene d'un groupe alg\'ebrique lin\'eaire connexe $G$
sur $\C$.
Soit $x\in X(\C)$,
on pose $H=\mathrm{Stab}_G(x)$.
On suppose que $\Pic(G)=0$ et que $H$ est connexe.
Alors il existe un isomorphisme canonique de groupes ab\'eliens
$$
\mathcal{H}^{-1}(C_{X*})\isoto \pi^\top_2(X,x)(-1) .$$
\end{theorem}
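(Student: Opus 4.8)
The plan is to lift, one homotopy degree higher, the argument by which Th\'eor\`eme~\ref{thm:main-bis} and Corollaire~\ref{cor:main-bis}(b) computed $\pi_1^\top$: extract $\pi_2^\top(X,x)$ from the orbit fibration and recognise the answer inside Borovoi's algebraic fundamental group. The orbit morphism $G\to X$ is smooth and surjective, so on $\C$-points it exhibits $G(\C)\to X(\C)$ as a locally trivial fibre bundle with fibre the coset of $H$, canonically pointed at $1\in G(\C)$ above $x$. Since $G(\C)$ is a connected complex Lie group, $\pi_2^\top(G,1)=0$ --- the classical vanishing of $\pi_2$ of a Lie group (for a maximal compact $K\subset G(\C)$ with maximal torus $T$, the flag bundle $T\to K\to K/T$ gives $\pi_2^\top(K)=\ker[\pi_2^\top(K/T)\to\pi_1^\top(T)]=0$, as $\pi_2^\top(K/T)$, the coroot lattice, injects into $\pi_1^\top(T)$). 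Exactness of the homotopy sequence of the orbit fibration at $\pi_2^\top(X,x)$ and at $\pi_1^\top(H,1)$ then yields a canonical isomorphism
$$
\pi_2^\top(X,x)\isoto\ker\bigl[\pi_1^\top(H,1)\labelto{i_*}\pi_1^\top(G,1)\bigr],
$$
where $i\colon H\hookrightarrow G$ is the inclusion.

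Applying the twist $(-1)$ and the canonical functorial isomorphism $\pi_1^\top(G',1)(-1)\isoto\pi_1^\alg(G')$ of \cite[Prop.~1.11]{Bor-Memoir} (for every connected linear $\C$-group $G'$), this becomes
$$
\pi_2^\top(X,x)(-1)\isoto\ker\bigl[\pi_1^\alg(H)\labelto{i_*}\pi_1^\alg(G)\bigr],
$$
so it suffices to identify the right-hand side with $\mathcal{H}^{-1}(C_{X*})$. Choose a maximal torus $T_H\subset H$ inside a maximal torus $T_G\subset G$, together with a compatible maximal torus $T_{H^\sc}\subset H^\sc$, and let $T_{H^\red}$ be the image of $T_H$ in $H^\red$; by definition $\pi_1^\alg(H)=T_{H^\red *}/\rho_*(T_{H^\sc *})$, with $\rho\colon H^\sc\to H^\red$. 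Since $\Pic(G)=0$, the derived group of $G^\red$ is simply connected (Sansuc~\cite{Sansuc}), whence $\pi_1^\alg(G)=G^\tor_*$ canonically, and $i_*$ is the map on cocharacter groups induced by the canonical homomorphism $T_{H^\red}\to G^\tor$. The composite $T_{H^\sc}\to T_{H^\red}\to G^\tor$ is trivial, because its image lies in $H^\ssu=\ker[H\to H^\tor]$, which has no characters; hence $\rho_*(T_{H^\sc *})\subseteq\ker[T_{H^\red *}\to G^\tor_*]$ and
$$
\ker\bigl[\pi_1^\alg(H)\to\pi_1^\alg(G)\bigr]=\ker\bigl[T_{H^\red *}\to G^\tor_*\bigr]\big/\rho_*(T_{H^\sc *})=\mathcal{H}^{-1}(C_{X*}),
$$
the last term being, by inspection, the degree-$(-1)$ cohomology of $C_{X*}=\langle T_{H^\sc *}\to T_{H^\red *}\to G^\tor_*]$. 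Composing the displayed isomorphisms gives the desired canonical isomorphism $\mathcal{H}^{-1}(C_{X*})\isoto\pi_2^\top(X,x)(-1)$, i.e. $\pi_2^\top(X,x)(-1)\cong\ker[T_{H^\red *}\to G^\tor_*]/T_{H^\sc *}$, as announced right after the statement.

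Beyond the classical vanishing of $\pi_2$ of a Lie group, the remaining work is bookkeeping: that $G(\C)\to X(\C)$ is genuinely a Serre fibration, and that the isomorphisms above are canonical --- independent of the base point $1\in G(\C)$, of the choice of $T_G\supset T_H$, and of the compatible $T_{H^\sc}$ --- which follows from the functoriality in \cite[Prop.~1.11]{Bor-Memoir} together with the standard invariance of $C_{X*}$ under conjugation of the maximal tori. I would also flag one contrast with Corollaire~\ref{cor:main-bis}(b): the computation must be carried out \emph{directly} with $G\to X$ and \emph{not} routed through the twisting torsor $Y\to X$ of \S\,\ref{s:2}, because a nontrivial torus torsor alters $\pi_2^\top$ --- for instance $X=SL_2/\Gm$ (quotient by a maximal torus) has $X(\C)$ homotopy equivalent to $S^2$, so $\pi_2^\top(X)=\Z$, whereas the associated $\Gm$-torsor $Y$ produced in \S\,\ref{s:2} has $Y(\C)$ homotopy equivalent to $S^3$, so $\pi_2^\top(Y)=0$.
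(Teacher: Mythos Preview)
Your proof is correct and follows essentially the same route as the paper's: both use the homotopy exact sequence of the fibration $H\to G\to X$ together with Cartan's vanishing $\pi_2^\top(G)=0$ to identify $\pi_2^\top(X,x)(-1)$ with $\ker[\pi_1^\alg(H)\to\pi_1^\alg(G)]$ via the functorial isomorphism of \cite[Prop.~1.11]{Bor-Memoir}, and then recognise this kernel as $\mathcal{H}^{-1}(C_{X*})$. The only cosmetic difference is that the paper packages the final identification as a commutative diagram arising from the short exact sequence of complexes $0\to G^\tor_*\to C_{X*}\to \langle T_{H^\sc *}\to T_{H^\red *}][1]\to 0$, whereas you compute the kernel directly.
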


\begin{proof}
On \'ecrit la suite exacte de fibration
$$
0=\pi_2^\top(G,1)(-1) \to \pi_2^\top(X,x)(-1) \to \pi_1^\top(H,1)(-1) \to \pi_1^\top(G,1)(-1) \, ,
$$
o\`u l'annulation du groupe $\pi_2^\top(G,1)$ est un th\'eor\`eme d'\'Elie Cartan
(pour le cas de groupes de Lie compacts voir \cite{Borel}).
On \'ecrit \'egalement la suite exacte de cohomologie associ\'ee \`a la suite exacte de complexes
$$
0\to G^\tor_*\to \langle T_{H^\sc *}\to T_{H^\red *}\to G^\tor_*]\to \langle T_{H^\sc *}\to T_{H^\red *}][1]\to 0
$$
(o\`u $ G^\tor_*$ est en degr\'e $0$ dans le complexe central)
et on obtient le diagramme  suivant, \`a lignes exactes
\begin{equation}
\xymatrix@C=5mm{
0\ar[r] &\sH^{-1}(C_{X*})\ar[r]\ar@{-->}[d] &\sH^0(\langle T_{H^\sc *}\to T_{H^\red *}])\ar[r]\ar[d]^\cong & G^\tor_*\ar[d]^\cong \\
0  \ar[r] & \pi_2^\top(X,x)(-1) \ar[r] & \pi_1^\top(H,1)(-1) \ar[r] & \pi_1^\top(G,1)(-1) \,.
}
\end{equation}
L'isomorphisme vertical $\sH^0(\langle T_{H^\sc *}\to T_{H^\red *}])\isoto\pi_1^\top(H,1)(-1)$ dans ce diagramme
a \'et\'e constuit dans \cite[Prop.~1.11]{Bor-Memoir}
(on note que
$$
\sH^0(\langle T_{H^\sc *}\to T_{H^\red *}])=\coker[ T_{H^\sc *}\to T_{H^\red *}] =\pi_1^\alg(H)
$$
est le groupe fondamental alg\'ebrique introduit dans  \cite{Bor-Memoir}).
Comme cet isomorphisme est fonctoriel en $H$, le rectangle \`a droite est commutatif.
Ce diagramme permet finalement de construire l'isomorphisme canonique en pointill\'es
$$
\sH^{-1}(C_{X*})\isoto\pi_2^\top(X,x)(-1) \, ,
$$
ce qui conclut la preuve du th\'eor\`eme.
\end{proof}

\section{Caract\'eristique positive : stabilisateur connexe}\label{s:5}

Dans cette section  on prouve le th\'eor\`eme \ref{thm:charp}.
On commence  par le lemme crucial suivant :

\begin{lemma} \label{lem:sec}
Soient $G,G'$ deux $k$-groupes alg\'ebriques connexes et $f : (G,X,x) \to (G', X', x')$ un morphisme d'espaces homog\`enes
\`a stabilisateurs respectifs $H := \textup{Stab}_G(x)$ et $H' := \textup{Stab}_{G'}(x')$,
de sorte que le morphisme $G \to G'$ soit surjectif.
On note $G_0 := \ker(G \to G')$ et $X_0 := f^{-1}(x')$. On suppose que $H'$, $H$  et $G_0$ sont connexes.
Alors on a une suite exacte de groupes :
$$
\pi_1^\et(X_0,x)^{(p')} \to \pi_1^\et(X,x)^{(p')} \xrightarrow{f_*} \pi_1^\et(X',x')^{(p')} \to 1 \, .
$$
\end{lemma}

\begin{proof}
On d\'efinit les $k$-groupes lin\'eaires $G_1 := G \times_{G'} H'$ et $H_1 := H \times_{G} G_1$. Alors on a une suite exacte canonique
$$
1 \to G_0 \to G_1 \to H' \to 1 \, ,
$$
donc en particulier le groupe $G_1$ est connexe.
V\'erifions maintenant que $X_0$ est naturellement un espace homog\`ene de $G_1$, de stabilisateur $H_1$ :
en restreignant l'action de $G_1$ sur $X$ \`a la sous-vari\'et\'e $X_0$, on obtient un morphisme $m : G_1 \times X_0 \to X$.
V\'erifions que $X_0$ est stable par cette action de $G_1$, c'est-\`a-dire que le morphisme $m$ se factorise en un morphisme $G_1 \times X_0 \to X_0$.
L'image de $G_1$ par le morphisme $G\to G'$ est le sous-groupe $H'$ de $G'$, et l'image de $X_0$ par le morphisme $f$ est le point $x'$,
dont le stabilisateur dans $G'$ est exactement $H'$.
Par cons\'equent, on voit que le morphisme compos\'e $G_1 \times X_0 \xrightarrow{m} X \xrightarrow{f} X'$
est le morphisme constant \'egal \`a $x'$ (i.e. il se factorise par $x' : \textup{Spec}(k) \to X'$),
ce qui assure que le morphisme $m$ se factorise par l'inclusion $X_0 = f^{-1}(x') \to X$, et donc $m$ d\'efinit bien une action de $G_1$ sur $X_0$.
Il est alors clair que cette action est transitive et que le stabilisateur de $x$ pour cette action est exactement le sous-groupe $H_1 = H \times_G G_1$ de $G_1$.
\smallskip

     Montrons la surjectivit\'e de $f_*$. En utilisant \cite{SGA1}, expos\'e IX, corollaire 5.6,
il suffit de v\'erifier que $f$ est un morphisme universellement submersif \`a fibres g\'eom\'etriquement connexes,
ce qui r\'esulte du fait que $f$ est fid\`element plat et quasi-compact
(voir \cite{SGA3}, expos\'e $\textup{VI}_{\textup{B}}$, proposition 9.2.(xiii).a) :
les deux morphismes $G \to G'$ et $G' \to X'$ sont fid\`element plats et de pr\'esentation finie),
 ainsi que du fait que $G_1$ est connexe.
\smallskip

\item Montrons maintenant l'exactitude de la suite en $\pi_1^\et(X)^{(p')}$.
Pour cela, on utilise le th\'eor\`eme 1.2 de \cite{BrSz}.
En effet, suivant \cite{Sz}, corollaire 5.5.9, il suffit de montrer que pour tout rev\^etement \'etale galoisien $Y \to X$,
de degr\'e premier \`a $p$, tel que $Y \times_{X} X_0$ ait une section sur $X_0$,
il existe un rev\^etement \'etale fini connexe $Y' \to X'$, de degr\'e premier \`a $p$,
tel qu'une composante connexe de $Y' \times_{X'} X$
soit munie d'un $X$-morphisme surjectif vers $Y$. Soit donc un rev\^etement \'etale galoisien $Y \to X$ (de degr\'e premier \`a $p$)
tel que $Y_0 := Y \times_{X} X_0$ admette une section $s_0 : X_0 \to Y_0$.
Comme $H$ est connexe,
le th\'eor\`eme 1.2 de \cite{BrSz} assure qu'il existe un groupe lin\'eaire connexe $\widetilde{G}$,
une isog\'enie centrale $\widetilde{G} \to G$ et un relev\'e $\widetilde{H}$ de $H$ dans $\widetilde{G}$
tel que $Y = \widetilde{G} / \widetilde{H}$. On a donc un diagramme commutatif exact de la forme :
\begin{displaymath}
    \xymatrix{
    1 \ar[r] & \mu \ar[r] & \widetilde{G} \ar[r] \ar@{->>}[d] & G \ar[r] \ar@{->>}[d] & 1 \\
         &            & Y = \widetilde{G} / \widetilde{H} \ar[r]  & X = G/H \, ,}
\end{displaymath}
o\`u $\mu$ est un $k$-groupe fini de type multiplicatif.

On d\'efinit alors le $k$-groupe $\widetilde{G_1} := \widetilde{G} \times_G G_1$.

Consid\'erons le diagramme commutatif suivant :
\begin{displaymath}
\xymatrix{
& \widetilde{G} \ar[rr] \ar'[d][dd] & & G \ar[dd] \\
\widetilde{G_1} \ar[rr] \ar[ru] & & G_1 \ar[ru] \ar[dd] & \\
& Y \ar'[r][rr] & & X \\
Y_0 \ar[rr] \ar[ru] & & X_0 \ar[ru] & \, ,
}
\end{displaymath}
o\`u les quatre carr\'es sont cart\'esiens (pour la face de droite, c'est une cons\'equence de la d\'efinition de $G_1$).
Ce diagramme implique l'existence d'une fl\`eche verticale $\widetilde{G_1} \to Y_0$ qui fait commuter le cube. Puisque
$$\widetilde{G_1} = G_1 \times_G \widetilde{G} = (X_0 \times_X G) \times_G \widetilde{G} = X_0 \times_X \widetilde{G} = X_0 \times_X (Y \times_X G)$$
et
$$Y_0 \times_{X_0} G_1 = Y_0 \times_{X_0} (X_0 \times_X G) = Y_0 \times_X G = (X_0 \times_X Y) \times_X G \, ,$$
on v\'erifie que dans le cube pr\'ec\'edent, tous les carr\'es sont cart\'esiens, donc en particulier le carr\'e suivant
\begin{displaymath}
    \xymatrix{
    \widetilde{G_1} \ar[r]^\pi \ar[d] & G_1 \ar[d] \\
    Y_0 \ar[r] & X_0
    }
\end{displaymath}
est cart\'esien. On voit aussi que le morphisme $(\widetilde{G_1}, Y_0) \to (G_1,X_0)$ est un morphisme d'espaces homog\`enes.

La section $s_0 : X_0 \to Y_0$ induit alors une section $s_1 : G_1 \to \widetilde{G_1}$ du morphisme $\pi$
(comme morphisme de $k$-vari\'et\'es) apparaissant dans le carr\'e pr\'ec\'edent. Par construction, on a $s_1(1) = 1$.
Or $G_1$ est connexe, donc le lemme de Rosenlicht assure que $s_1$ est un homomorphisme de groupes alg\'ebriques
(voir par exemple la preuve de la proposition 3.2 de \cite{CT}).

Remarquons \'egalement que l'on dispose d'un diagramme commutatif de suites exactes courtes centrales
(o\`u la suite exacte sup\'erieure est obtenue en tirant en arri\`ere la suite exacte inf\'erieure par le morphisme injectif $G_1 \to G$) :
\begin{displaymath}
\xymatrix{
1 \ar[r] & \mu \ar[r] \ar[d]^= & \widetilde{G_1} \ar[r]_\pi \ar[d] & G_1 \ar@/_/[l]_{s_1} \ar[r] \ar[d] & 1 \\
1 \ar[r] & \mu \ar[r] & \widetilde{G} \ar[r] & G \ar[r] & 1 \, .
}
\end{displaymath}

La section $s_1$ permet d'identifier $\widetilde{G_1}$ avec le produit direct $G_1 \times \mu$, et donc $G_1$ avec la composante neutre de $\widetilde{G_1}$.
En particulier, via $s_1$, $G_1$ est un sous-groupe distingu\'e de $\widetilde{G}$.
Avec ces identifications, $\widetilde{H}$ est un sous-groupe de $G_1$ dans le groupe $\widetilde{G}$.

On d\'efinit alors $Y' := \widetilde{G} / G_1$.

Si on note maintenant $q$ l'isog\'enie initiale $q : \widetilde{G} \to G$, on a un diagramme commutatif de suites exactes courtes
\begin{displaymath}
\xymatrix{
1 \ar[r] & \widetilde{H} \ar[r] \ar[d] & q^{-1}(H) \ar[d] \ar[r] & \mu \ar[r] \ar[d]^= & 1 \\
1 \ar[r] & G_1 \ar[r] & \widetilde{G_1} \ar[r] & \mu \ar[r] & 1 \, ,
}
\end{displaymath}
qui assure que le carr\'e suivant
\begin{displaymath}
\xymatrix{
Y = \widetilde{G}/\widetilde{H} \ar[r] \ar[d] & X = \widetilde{G}/q^{-1}(H) \ar[d] \\
Y' = \widetilde{G}/G_1 \ar[r] & X' = \widetilde{G}/\widetilde{G_1}
}
\end{displaymath}
est cart\'esien, et que les deux morphismes horizontaux sont des torseurs connexes sous le groupe fini de type multiplicatif $\mu$.

 En particulier, le morphisme $Y' \to X'$ est un rev\^etement \'etale fini connexe, tel que $Y' \times_{X'} X \cong Y$ au-dessus de $X$.
Cela conclut la preuve de l'exactitude de la suite du lemme.
\end{proof}

\begin{theorem}\label{thm:charp-bis}
Soit $k$ un corps alg\'ebriquement clos de caract\'eristique $p \geq 0$. Soit $G/k$ un groupe lin\'eaire connexe lisse
et $X/k$ un espace homog\`ene de $G$. Soit $x \in X(k)$, on pose $H := \textup{Stab}_G(x)$.
On suppose que $\Pic(G) = 0$ et que $H$ est lisse et connexe.
 Alors il existe un isomorphisme canonique de groupes ab\'eliens :
$$\coker[H^\tor_*\labelto{i_*}G^\tor_*] \otimes_{\Z} \Z_{(p')} \isoto \pi_1^\et(X,x)^{(p')}(-1) \, .$$
\end{theorem}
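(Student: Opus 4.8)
The plan is to reduce the statement, via the auxiliary homogeneous spaces $W$, $Z$, $Y$ of \S\ref{s:2}, to the already-known topological statement of Corollary \ref{cor:main-bis}(b) together with the comparison between étale and topological $\pi_1$ in characteristic zero; then transport the result to characteristic $p$ using Lemma \ref{lem:sec} and the results of Brion–Szamuely \cite{BrSz}. More precisely, I would first treat the case where $X = W$ is a principal homogeneous space of a $k$-torus $G_W$: here $W \cong G_W$ as $k$-varieties, and $\pi_1^\et(G_W,1)^{(p')}(-1) = G_{W*}\otimes_\Z\Z_{(p')}$ by the standard computation of the prime-to-$p$ étale fundamental group of a torus (see \cite{Sz}, \S5.4), so the statement holds with $\coker[H_W^\tor{}_*\to G_W^\tor{}_*] = G_{W*}$. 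The case of $Z$ follows since $Z = W$ and $\widehat{G_Z}\to[\widehat{G_Z}\to\widehat{H_Z}\rangle$ is a quasi-isomorphism; equivalently $\coker[H^\tor_{Z*}\to G^\tor_{Z*}] = G_{W*}$.

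Next I would treat $Y$: one has the fibration $\mu_*\colon Y \to Z$ with fibre $G^\ssu/H^\kerchar$, which is a homogeneous space of the connected semisimple simply connected group $G^\ssu$ (since $\Pic(G)=0$) with connected stabilizer $H^\kerchar$ (note $H$ connected implies $H^\kerchar$ connected). By \cite{BrSz} this fibre has trivial prime-to-$p$ étale fundamental group — or one applies Lemma \ref{lem:sec} directly — so the exact sequence of Lemma \ref{lem:sec} gives $\pi_1^\et(Y,y)^{(p')}\isoto\pi_1^\et(Z,z)^{(p')}$, hence the isomorphism for $Y$ since $\widehat{G_Y}=\widehat{G_Z}$, $\widehat{H_Y}=\widehat{H_Z}$. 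Finally for $X$ itself: the map $\pi_*\colon Y \to X$ is a torsor under the $k$-torus $Q$. Here I would use the fibration/exact-sequence yoga for a torsor under a torus to get a right-exact sequence
$$
\pi_1^\et(Q,1)^{(p')}(-1)\labelto{\lambda_*}\pi_1^\et(Y,y)^{(p')}(-1)\labelto{\pi_*}\pi_1^\et(X,x)^{(p')}(-1)\to 0,
$$
and compare it with the sequence \eqref{eq:exact1} tensored with $\Z_{(p')}$, exactly as in \S\ref{subsec:case-X}, to produce the desired canonical isomorphism
$$
\pi_1^\et(X,x)^{(p')}(-1)\isoto \Ext^0_\Z(\Ghat\to\Hhat,\Z)\otimes_\Z\Z_{(p')} = \coker[H^\tor_*\to G^\tor_*]\otimes_\Z\Z_{(p')},
$$
the last equality coming from Corollary \ref{cor:main-bis}(b) (which, being a purely combinatorial identity about character lattices, is valid over any algebraically closed field). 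Independence of the chosen embedding $j\colon H^\mult\into Q$ is checked as in \S\ref{subsec:independant}.

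The main obstacle I expect is establishing right-exactness of the torsor sequence for $\pi_1^\et(-)^{(p')}$ under the torus $Q$ — i.e.\ that $\pi_*\colon\pi_1^\et(Y)^{(p')}\to\pi_1^\et(X)^{(p')}$ is surjective with kernel generated by the image of $\pi_1^\et(Q)^{(p')}$. Surjectivity is unproblematic ($\pi_*$ is faithfully flat with geometrically connected fibres, so \cite{SGA1}, Exp.~IX, Cor.~5.6 applies), but identifying the kernel requires a homotopy-exact-sequence statement for the fibration $Q\to Y\to X$ in the prime-to-$p$ étale setting; I would deduce this from the corresponding statement for the associated line-bundle-type fibrations, using that $X$ and $Y$ are smooth and that $Q$ is a split torus so $\pi_1^\et(Q)^{(p')}\cong Q_*\otimes\Z_{(p')}$ is finitely generated, reducing to \cite{Sz}, Cor.~5.5.9 or to a direct application of Lemma \ref{lem:sec} after compactifying $Q$ appropriately. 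A secondary point to handle carefully is the connectedness hypotheses needed to invoke Lemma \ref{lem:sec} at each stage — in particular that $G_Y^\ssu$ (the kernel of $G_Y\to G_Z$) is connected — which follows from the constructions in \S\ref{s:2}.
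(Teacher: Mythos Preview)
Your approach is correct but differs from the paper's. The paper does \emph{not} use the auxiliary spaces $W,Z,Y$ of \S\ref{s:2} to prove Theorem~\ref{thm:charp-bis}; instead it applies Lemma~\ref{lem:sec} once, to the single fibration $(G,G)\to(G,X)$ with fibre $H$, after first establishing Proposition~\ref{prop:pi1alggroup} (the canonical isomorphism $\pi_1^\alg(G)\otimes\Z_{(p')}\isoto\pi_1^\et(G)^{(p')}(-1)$ for any connected linear $G$, proved via a flasque-type resolution). Your route is precisely the one the paper reserves for the non-connected case (Theorem~\ref{thm:charp non connexe-bis}) in \S\ref{s:6}: it avoids Proposition~\ref{prop:pi1alggroup} at the cost of three fibrations instead of one. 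Both work; the paper's direct argument is shorter here because $H$ connected makes Lemma~\ref{lem:sec} immediately applicable to $G\to X$, while your route pays off only when $H$ is disconnected and one must pass through $Y$ to reach a situation with connected stabilizer.

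Two small points. First, your worry about the torsor sequence for $Y\to X$ is unfounded: Lemma~\ref{lem:sec} applies verbatim to $(G_Y,Y)\to(G,X)$ since $H_Y\cong H$, $H$, and $G_0=Q$ are all connected, and the fibre $X_0$ is $Q$ itself; no compactification or line-bundle reduction is needed. Second, $G^\ssu$ is not semisimple simply connected as you write --- it is an extension of $G^\sss$ (which is simply connected because $\Pic(G)=0$) by the unipotent radical $G^\uu$; the conclusion $\pi_1^\et(G^\ssu)^{(p')}=1$ still holds, by Lemma~\ref{lem:uu-sc} and one more application of Lemma~\ref{lem:sec}.
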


D\'emontrons le th\'eoreme \ref{thm:charp-bis} :
on va traiter d'abord le cas des tores, puis des groupes lin\'eaires connexes, et enfin celui des espaces homog\`enes.

\begin{lemma}[bien connu] \label{lem:tore}
Soit $T$ un tore d\'efini sur $k$.
Alors il y a un isomorphisme canonique et fonctoriel $T_*\otimes_\Z \Zp \isoto \pi_1^\et(T)^{(p')}(-1)$.
\end{lemma}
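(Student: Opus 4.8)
Le plan est de construire un homomorphisme canonique et fonctoriel
$\phi_T\colon T_*\otimes_\Z\Zp\to\pi_1^\et(T)^{(p')}(-1)$,
puis de montrer que c'est un isomorphisme en se ramenant au cas $T=\G_{m,k}$.

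D'abord on construit $\phi_T$ : \`a un cocaract\`ere $\lambda\in T_*=\Hom_k(\G_{m,k},T)$, la fonctorialit\'e du groupe fondamental \'etale et de son quotient maximal premier \`a $p$ associe un homomorphisme continu $\lambda_*\colon\pi_1^\et(\G_{m,k})^{(p')}\to\pi_1^\et(T)^{(p')}$, c'est-\`a-dire un \'el\'ement $\lambda_*\in\pi_1^\et(T)^{(p')}(-1)$. Comme $k$ est alg\'ebriquement clos, on a $T\cong\G_{m,k}^r$, donc $\pi_1^\et(T)^{(p')}$ est ab\'elien ($\pi_1^\et$ transforme produit fini en produit, voir \cite{SGA1}) ; c'est donc un $\Zp$-module, ainsi que $\pi_1^\et(T)^{(p')}(-1)$. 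L'application $\lambda\mapsto\lambda_*$ est un homomorphisme de groupes : le cocaract\`ere produit $\lambda\cdot\mu$ \'etant le compos\'e de la diagonale $\G_{m,k}\to\G_{m,k}\times\G_{m,k}$, de $\lambda\times\mu$, et de la multiplication de $T$, l'argument habituel d'Eckmann--Hilton donne $(\lambda\cdot\mu)_*=\lambda_*+\mu_*$ (on peut aussi le v\'erifier apr\`es le choix d'un isomorphisme $T\cong\G_{m,k}^r$). Par la propri\'et\'e universelle de la localisation, cet homomorphisme $\Z$-lin\'eaire s'\'etend de fa\c con unique en un homomorphisme $\Zp$-lin\'eaire $\phi_T$ comme annonc\'e, visiblement fonctoriel en $T$.

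Ensuite on se ram\`ene au cas $T=\G_{m,k}$. Les deux membres sont additifs en $T$ : d'une part $(T_1\times T_2)_*=T_{1*}\oplus T_{2*}$, d'autre part la formule de K\"unneth pour $\pi_1^\et$ donne $\pi_1^\et(T_1\times T_2)^{(p')}(-1)=\pi_1^\et(T_1)^{(p')}(-1)\oplus\pi_1^\et(T_2)^{(p')}(-1)$, et en consid\'erant les projections et les inclusions on obtient $\phi_{T_1\times T_2}=\phi_{T_1}\oplus\phi_{T_2}$. Tout $k$-tore \'etant isomorphe \`a une puissance de $\G_{m,k}$ et $\phi$ \'etant fonctoriel, il suffit de traiter $T=\G_{m,k}$. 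Dans ce cas $T_*=\Z\cdot\id_{\G_{m,k}}$, donc $T_*\otimes_\Z\Zp=\Zp$, et $\phi_{\G_{m,k}}$ envoie $1$ sur l'endomorphisme identit\'e de $\Pi:=\pi_1^\et(\G_{m,k})^{(p')}$. Les rev\^etements de Kummer $z\mapsto z^n$ (avec $p\nmid n$) \'etant \'etales finis galoisiens de groupe $\mu_n(k)$ et cofinaux parmi les rev\^etements \'etales connexes de $\G_{m,k}$ de degr\'e premier \`a $p$ (calcul du groupe fondamental mod\'er\'e de $\G_m$, voir \cite{SGA1} et \cite[\S\,5.4]{Sz}), le groupe $\Pi$ est isomorphe, comme groupe profini, \`a $\Zp$ ; en particulier tout endomorphisme continu de $\Pi$ est la multiplication par un unique \'el\'ement de $\Zp$, ce qui identifie $\Hom_{\textup{cont.}}(\Pi,\Pi)$ \`a $\Zp$, l'identit\'e correspondant \`a $1$. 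Comme $\phi_{\G_{m,k}}$ est $\Zp$-lin\'eaire et envoie $1$ sur $\id_\Pi$, c'est un isomorphisme.

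La difficult\'e principale, somme toute mineure, sera le calcul $\pi_1^\et(\G_{m,k})^{(p')}\cong\Zp$ lorsque $p>0$ : il faut invoquer la th\'eorie du groupe fondamental mod\'er\'e, la ramification sauvage en $0$ et en $\infty$ disparaissant dans le quotient premier \`a $p$. Tout le reste (additivit\'e de $\phi$, compatibilit\'e aux produits, fonctorialit\'e, identification de $\Hom_{\textup{cont.}}(\Pi,\Pi)$ \`a $\Zp$) rel\`eve de v\'erifications de routine.
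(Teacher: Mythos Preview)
Your proof is correct but follows a different route from the paper's. The paper works directly with a general torus $T$: invoking Miyanishi \cite{Miy} (or \cite[Prop.~1.1(a)]{BrSz}), every \'etale Galois cover of $T$ of degree $n$ prime to $p$ arises from a central isogeny and is therefore dominated by the multiplication-by-$n$ map $\varphi_n\colon T\to T$; one then computes $\pi_1^\et(T)^{(p')}(-1)$ directly as the inverse limit
\[
\varprojlim_{(n,p)=1}\Hom(\mu_n,\ \mu_n\otimes_\Z T_*)=\varprojlim_{(n,p)=1}(\Z/n\Z)\otimes_\Z T_*=\Z_{(p')}\otimes_\Z T_*.
\]
Your approach instead first constructs the canonical map $\phi_T$ from functoriality of $\pi_1^\et$, and then reduces via K\"unneth and additivity to the case $T=\G_{m,k}$, where you invoke the computation of the tame fundamental group via Kummer covers. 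Both arguments ultimately rest on the same underlying fact (prime-to-$p$ \'etale covers of a torus are dominated by power maps); the paper's packaging avoids K\"unneth and produces canonicity out of the inverse-limit formula, while yours makes functoriality and the map itself explicit from the start, which is pleasant.

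One terminological slip: in this paper $\Z_{(p')}=\prod_{\ell\neq p}\Z_\ell$ is a profinite ring, not a localization of $\Z$, so ``propri\'et\'e universelle de la localisation'' is not the right phrase. What you are actually using is base change (tensor--hom adjunction): since the target is a $\Z_{(p')}$-module, a $\Z$-linear map from $T_*$ extends uniquely to a $\Z_{(p')}$-linear map from $T_*\otimes_\Z\Z_{(p')}$.
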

\begin{proof}
Soit $Y\to T$ un rev\^etement \'etale galoisien de degr\'e $n$ premier \`a $p$.
Par \cite{Miy} ou \cite[Prop.~1.1(a)]{BrSz}, le rev\^etement $Y\to T$
a une structure d'isog\'enie centrale $T'\to T$.
Il en r\'esulte que $Y\to T$ est domin\'e par l'isog\'enie $\varphi_n\colon T\to T,\ t\mapsto t^n$.
On pose $T_n=\ker\,\varphi_n$ (consid\'er\'e comme un groupe abstrait), alors $T_n=\mu_n\otimes_\Z T_*$.
On a :
\begin{align*}
\pi_1^\et(T)^{(p')}&(-1)=\Hom_{\textup{cont}}(\Zp(1),\pi_1^\et(T)^{(p')})=\varprojlim \Hom_{\textup{cont}}(\Zp(1),T_n)\\
      &=\varprojlim\Hom_{\textup{cont}}(\mu_n,\mu_n\otimes_\Z T_*)=\varprojlim\Z/n\Z\otimes_\Z T_*
     =\Zp\otimes_\Z T_*.
\end{align*}
\end{proof}

\begin{lemma}[bien connu] \label{lem:uu-sc}
Soit $G$ :

(a) un groupe unipotent connexe sur $k$, ou

(b) un groupe semi-simple simplement connexe sur $k$.

\noindent Alors $\pi_1^\et(G)^{(p')}=1$.
\end{lemma}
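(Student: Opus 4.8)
Pour \'etablir que $\pi_1^\et(G)^{(p')}=1$ dans chacun des deux cas, le plan est de montrer que tout rev\^etement \'etale connexe fini $\varphi\colon Y\to G$ de degr\'e premier \`a $p$, point\'e au-dessus de l'\'el\'ement neutre, est trivial. On proc\'ederait comme dans la preuve du lemme~\ref{lem:tore} : d'apr\`es Miyanishi \cite{Miy} (voir aussi \cite[Prop.~1.1]{BrSz}), valable pour tout groupe alg\'ebrique lin\'eaire connexe, un tel $\varphi$ poss\`ede une structure d'isog\'enie centrale de $k$-groupes alg\'ebriques $\widetilde G\to G$. Comme $\varphi$ est \'etale et $G$ lisse, $\widetilde G$ est un $k$-groupe lisse, connexe et isog\`ene \`a $G$, et son noyau $\mu:=\ker(\widetilde G\to G)$ est un sous-groupe central fini d'ordre premier \`a $p$, donc \'etale; sur le corps alg\'ebriquement clos $k$, c'est un groupe ab\'elien fini constant d'ordre premier \`a $p$. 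Il suffira donc de prouver que $\mu$ est trivial.

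Dans le cas (a), on v\'erifierait d'abord que $\widetilde G$ est unipotent : l'image dans $G$ d'un sous-tore de $\widetilde G$ est un sous-tore de $G$, donc triviale, de sorte que ce sous-tore est contenu dans le groupe fini $\mu$ et est par cons\'equent trivial; ainsi $\widetilde G$, lisse et connexe sans sous-tore non trivial, est unipotent. Si $p>0$, tout \'el\'ement de $\widetilde G(k)$ est unipotent, donc d'ordre une puissance de $p$; comme $\mu(k)\subseteq\widetilde G(k)$ est d'ordre premier \`a $p$, on obtient $\mu(k)=1$, d'o\`u $\mu=1$ et $\varphi$ trivial. Si $p=0$, on conclut plus directement : $G$ est alors isomorphe comme $k$-vari\'et\'e \`a un espace affine $\mathbb{A}^n_k$ (via l'exponentielle), donc $\pi_1^\et(G)=\pi_1^\et(\mathbb{A}^n_k)=1$. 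Dans les deux cas $\pi_1^\et(G)^{(p')}=1$.

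Dans le cas (b), $\widetilde G$ est lisse, connexe et isog\`ene au groupe semi-simple $G$, donc semi-simple, et $\widetilde G\to G$ est une isog\'enie centrale. Puisque $G$ est simplement connexe, toute isog\'enie centrale le recouvrant est un isomorphisme, donc $\widetilde G\to G$ est un isomorphisme et $\varphi$ est trivial; ainsi $\pi_1^\et(G)^{(p')}=1$. (Pour $p=0$, on peut aussi, en se ramenant \`a $k=\CC$, utiliser que $\pi_1^\et(G)$ est la compl\'etion profinie de $\pi_1^\top(G(\CC))=1$.)

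L'\'enonc\'e \'etant bien connu, on ne s'attend pas \`a rencontrer de difficult\'e s\'erieuse : le seul ingr\'edient non formel est le th\'eor\`eme de structure de Miyanishi, d\'ej\`a invoqu\'e pour le lemme~\ref{lem:tore}; une fois celui-ci acquis, chacun des deux cas se ram\`ene \`a un \'enonc\'e classique --- la structure de la torsion d'un groupe unipotent en caract\'eristique $p$ pour (a), et le fait qu'un groupe semi-simple simplement connexe n'admet pas d'isog\'enie centrale non triviale le recouvrant pour (b).
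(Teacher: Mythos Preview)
Your proposal is correct and follows essentially the same approach as the paper: invoke Miyanishi \cite{Miy} (or \cite[Prop.~1.1(a)]{BrSz}) to reduce prime-to-$p$ \'etale Galois covers of $G$ to central isogenies $\widetilde G\to G$, and then observe that a group $G$ as in (a) or (b) admits no nontrivial central isogeny of degree prime to $p$. The paper's proof is a single sentence stating exactly this, whereas you have spelled out the (well-known) justification of the last point in each case; the only minor difference is that the paper works with Galois covers from the outset, which is where the cited structure result applies directly.
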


\begin{proof}
Par \cite{Miy} ou \cite[Prop.~1.1(a)]{BrSz}, tout rev\^etement  \'etale galoisien $Y\to G$  de degr\'e $n$ premier \`a $p$
admet une structure d'une isog\'enie centrale $G'\to G$,
mais $G$ comme dans (a) ou (b) n'admet pas d'isog\'enie centrale non triviale de degr\'e premier \`a $p$.
\end{proof}

On consid\`ere maintenant le cas d'un groupe lin\'eaire connexe lisse quelconque.
Si $G$ est un tel groupe, on note $G^\uu$ son radical unipotent et $G^\red := G / G^\uu$.
Soit $T_G$ un tore maximal de $G^\red$ et $T_{G^\sc}$ un tore maximal de $G^\sc$ dont l'image dans $G^\red$ est contenue dans $T_G$.
On d\'efinit alors (voir \cite{Bor-Memoir})
$$
\pi_1^\alg(G) := \coker[T_{G^\sc *} \to T_{G*}] = T_{G*} / T_{G^\sc*} \, .
$$

\begin{remark}\label{rq pi1}
Dans le cas particulier o\`u $\Pic(G) = 0$ (ce qui \'equivaut au fait que $G^\sss$ soit simplement connexe),
la formule pr\'ec\'edente se simplifie en $\pi_1^\alg(G) = {G^\tor}_*$.
\end{remark}

\begin{proposition} \label{prop:pi1alggroup}
Soit $G$ un $k$-groupe lin\'eaire connexe lisse. Alors on a un isomorphisme canonique
$$
\pi_1^\alg(G) \otimes_{\Z} \Z_{(p')} \isoto \pi_1^\et(G)^{(p')}(-1) \, .
$$
\end{proposition}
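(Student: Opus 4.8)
The plan is to reduce first to the case of a reductive group and then to tori. If $G^\uu$ denotes the unipotent radical, then Lemme \ref{lem:uu-sc}(a) gives $\pi_1^\et(G^\uu)^{(p')}=1$, so the homotopy exact sequence of the fibration $1\to G^\uu\to G\to G^\red\to 1$ (available via \cite{BrSz} together with \cite{SGA1}) yields an isomorphism $\pi_1^\et(G)^{(p')}\isoto\pi_1^\et(G^\red)^{(p')}$; as $\pi_1^\alg(G)=\pi_1^\alg(G^\red)$ by construction, we may assume $G$ reductive.

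Assume then $G$ reductive, fix a maximal torus $T_G\subset G$ and a maximal torus $T_{G^\sc}\subset G^\sc$ whose image under the central isogeny $\nu\colon G^\sc\to G^\sss=[G,G]$ is $T_G\cap G^\sss$. I would use the short exact sequence of $k$-groups
$$
1\to T_{G^\sc}\to G^\sc\times_k T_G\labelto{\rho} G\to 1,\qquad \rho(g,t)=\nu(g)\,t,
$$
in which $T_{G^\sc}$ is embedded by $g\mapsto(g,\nu(g)^{-1})$: surjectivity of $\rho$ follows from $G=G^\sss\cdot T_G$ (since $T_G\onto G^\tor$), and the computation of $\ker\rho$ uses that the preimage of a maximal torus under an isogeny is a maximal torus. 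Applying $\pi_1^\et(-)^{(p')}$ to the $T_{G^\sc}$-torsor $\rho$ and using the homotopy exact sequence (\cite{BrSz}, \cite{Sz}), Lemme \ref{lem:uu-sc}(b) (so that $\pi_1^\et(G^\sc)^{(p')}=1$) and the K\"unneth formula $\pi_1^\et(G^\sc\times_k T_G)^{(p')}=\pi_1^\et(G^\sc)^{(p')}\times\pi_1^\et(T_G)^{(p')}$ (\cite{SGA1}), one obtains an exact sequence
$$
\pi_1^\et(T_{G^\sc})^{(p')}\to\pi_1^\et(T_G)^{(p')}\to\pi_1^\et(G)^{(p')}\to 1,
$$
whose first map is induced by $T_{G^\sc}\to T_G$, $g\mapsto\nu(g)^{-1}$.

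Next I would apply the functor $(-1)=\Hom_{\textup{cont.}}(\Zp(1),-)$. On abelian prime-to-$p$ profinite groups this functor is exact (recall that $\pi_1^\et$ of a connected algebraic group is abelian), so it preserves the above right-exact sequence; then Lemme \ref{lem:tore}, which is functorial, identifies the two torus terms with $T_{G^\sc*}\otimes_\Z\Zp$ and $T_{G*}\otimes_\Z\Zp$, and the connecting map with $\pm(\nu_*\otimes\id)$. Hence $\pi_1^\et(G)^{(p')}(-1)$ is the cokernel of $\pm(\nu_*\otimes\id)$, which, since $-\otimes_\Z\Zp$ is right exact, equals $\coker[T_{G^\sc*}\to T_{G*}]\otimes_\Z\Zp=\pi_1^\alg(G)\otimes_\Z\Zp$. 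Independence of the choices of $T_G$ and $T_{G^\sc}$ follows from conjugacy of maximal tori and functoriality of Lemme \ref{lem:tore}, giving the canonical isomorphism.

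The main obstacle I anticipate is the careful handling of the homotopy exact sequence for the \'etale fundamental group in characteristic $p$: one must make sure that the right-exact fibration sequence for a torsor under a connected smooth $k$-group, and the K\"unneth formula for $\pi_1^\et$, remain valid after passing to prime-to-$p$ quotients (this is where \cite{BrSz} is essential), and one must verify that the resulting map on $\pi_1^\et$ of tori really is, up to an irrelevant sign, the algebraically defined map $T_{G^\sc*}\to T_{G*}$. The verification that the final isomorphism is independent of all choices is routine but not empty.
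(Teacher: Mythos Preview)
Your argument is correct and reaches the same conclusion with the same basic ingredients (Lemme~\ref{lem:sec}, Lemme~\ref{lem:uu-sc}, Lemme~\ref{lem:tore}), but the reductive case is handled via a genuinely different presentation of $G$. The paper proceeds in two steps: it first picks a resolution $1\to S\to G'\to G\to 1$ with $S$ a central torus and ${G'}^\sss$ simply connected, proves the proposition for $G'$ using the sequence $1\to{G'}^\sss\to G'\to{G'}^\tor\to 1$ (so that both applications of Lemme~\ref{lem:sec} involve only a torus and a group with trivial $\pi_1^\et(-)^{(p')}$), and then descends to $G$. You instead use the single exact sequence $1\to T_{G^\sc}\to G^\sc\times_k T_G\to G\to 1$ together with the K\"unneth formula. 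Your route is shorter and makes the link with the very definition of $\pi_1^\alg(G)=\coker[T_{G^\sc*}\to T_{G*}]$ transparent, at the cost of invoking K\"unneth and having to track the sign in the map $T_{G^\sc}\to T_G$. The paper's two-step route avoids K\"unneth, stays entirely within the framework of Lemme~\ref{lem:sec} applied to group homomorphisms, and makes the canonicity slightly more automatic; on the other hand it requires the auxiliary resolution $G'\to G$. Note finally that your appeal to ``the homotopy exact sequence of a torsor'' is exactly an instance of Lemme~\ref{lem:sec} (take the same ambient group $G^\sc\times_k T_G$ on both sides, $X=G^\sc\times_k T_G$, $X'=G$, so that $H=1$, $H'=T_{G^\sc}$, $G_0=1$ are all connected); citing it directly would tighten the exposition.
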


\begin{proof}
Tout d'abord, on se ram\`ene au cas o\`u $G$ est r\'eductif : en effet,
le morphisme $G \to G^\red$ satisfait les hypoth\`eses du lemme \ref{lem:sec},
donc on en d\'eduit une suite exacte
$$
\pi_1^\et(G^\uu)^{(p')} \to \pi_1^\et(G)^{(p')} \to \pi_1^\et(G^\red)^{(p')} \to 0 \, .
$$
Or $\pi_1^\et(G^\uu)^{(p')} = 0$ d'apr\`es le lemme \ref{lem:uu-sc}(a),
donc on a un isomorphisme $\pi_1^\et(G)^{(p')} \isoto \pi_1^\et(G^\red)^{(p')}$
et on peut donc supposer $G$ r\'eductif.

Dans ce cas, il existe une r\'esolution  de $G$, not\'ee
\begin{equation} \label{resfl}
1 \to S \to G' \to G \to 1 \, ,
\end{equation}
o\`u $S$ est un $k$-tore central et $G'$ est un $k$-groupe  reductif tel que ${G'}^\sss$ est simplement connexe.

Montrons la proposition pour $G'$.
On dispose d'une suite exacte courte
$$
1 \to {G'}^\sss \to G' \to {G'}^\tor \to 1 \, ,
$$
o\`u ${G'}^\sss$ est semi-simple simplement connexe.
On consid\`ere
le diagramme commutatif
\begin{equation} \label{diag Gred}
\xymatrix@C=5mm{
0 \ar[r]  & \coker[T_{G'^\sss *}\to T_{G' *}] \otimes \Z_{(p')} \ar[r] \ar@{-->}[d]^\cong
        & {G'}^\tor_* \otimes \Z_{(p')} \ar[d]^{\cong} \ar[r] & 0 \\
 {\pi}_1^\et({G'}^\sss)^{(p')}(-1) \ar[r] & \pi_1^\et(G')^{(p')}(-1)\ar[r] & \pi_1^\et({G'}^\tor)^{(p')}(-1)\ar[r] & 0 \, .
}
\end{equation}
Par le lemme \ref{lem:sec}, la deuxi\`eme ligne  du diagramme est exacte,
et on a $\pi_1^\et({G'}^\sss)^{(p')}(-1)=0$ d'apr\`es le lemme \ref{lem:uu-sc}(b), car ${G'}^\sss$ est simplement connexe.
De la suite exacte courte
$$
1\to T_{{G'}^\sss}\to T_{G'}\to {G'}^\tor\to 1
$$
on obtient une suite exacte courte
$$
0\to T_{{G'}^\sss*}\to T_{G'*}\to {G'}^\tor_*\to 0,
$$
d'o\`u des isomorphismes
$$
\coker[T_{{G'}^\sss *}\to T_{G'*}]\isoto {G'}^\tor_* \quad \text{et} \quad
\coker[T_{{G'}^\sss *}\to T_{G' *}]\otimes \Zp\isoto {G'}^\tor_*\otimes \Zp
$$
et l'exactitude de la premi\`ere ligne du diagramme \eqref{diag Gred}.
Ce diagramme induit un isomorphisme canonique en pointill\'es,
ce qui d\'emontre la proposition pour $G'$.

D\'eduisons-en le r\'esultat pour $G$.
On applique le lemme \ref{lem:sec} \`a la suite exacte courte \eqref{resfl},
et on obtient le diagramme commutatif suivant, dont la seconde ligne est exacte :
\begin{displaymath}
\xymatrix@C=7mm{
S_* \otimes \Z_{(p')} \ar[r] \ar[d]^{\cong} & \pi_1^\alg(G') \otimes \Z_{(p')} \ar[d]^{\cong} \ar[r]
          & \pi_1^\alg(G) \otimes \Z_{(p')} \ar@{-->}[d] \ar[r] & 0 \\
{\pi}_1^\et(S,1)^{(p')}(-1) \ar[r] &\pi_1^\et(G',1)^{(p')}(-1)\ar[r] & \pi_1^\et(G,1)^{(p')}(-1)\ar[r] & 0 \, .
}
\end{displaymath}
De la suite exacte \eqref{resfl} on obtient une suite exacte courte
$$
0\to S_*\to\pi_1^\alg(G')\to\pi_1^\alg(G)\to 0,
$$
d'o\`u, en vertu de l'exactitude \`a droite du produit tensoriel, l'exactitude de la premi\`ere ligne du diagramme.
Ce diagramme assure finalement l'existence de l'isomorphisme canonique en pointill\'es
et conclut la preuve de la proposition \ref{prop:pi1alggroup}.
\end{proof}

\begin{corollary} [{cf.~\cite[Lemme 3 et bas de la page 152]{Miy}, voir aussi \cite[Prop.~1.1(b)]{BrSz}}]
Le groupe $\pi_1^\et(G)^{(p')}(-1)$ est un quotient de $T_{G*}\otimes_\Z \Zp$, o\`u $T_G$ est un tore maximal de $G$.
\end{corollary}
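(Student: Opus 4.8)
Le plan est de d\'eduire cet \'enonc\'e directement de la proposition \ref{prop:pi1alggroup}, le seul ingr\'edient suppl\'ementaire \'etant l'exactitude \`a droite du produit tensoriel. En effet, la proposition \ref{prop:pi1alggroup} fournit d\'ej\`a un isomorphisme canonique $\pi_1^\alg(G)\otimes_\Z\Zp\isoto\pi_1^\et(G)^{(p')}(-1)$, de sorte qu'il suffit de v\'erifier que $\pi_1^\alg(G)\otimes_\Z\Zp$ est un quotient de $T_{G*}\otimes_\Z\Zp$.

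Pour cela, je rappellerais d'abord que, par d\'efinition, $\pi_1^\alg(G)=\coker[T_{G^\sc *}\to T_{G*}]=T_{G*}/T_{G^\sc *}$; l'homomorphisme canonique $T_{G*}\to\pi_1^\alg(G)$ est donc surjectif. En appliquant le foncteur exact \`a droite $-\otimes_\Z\Zp$, on en tire une surjection $T_{G*}\otimes_\Z\Zp\onto\pi_1^\alg(G)\otimes_\Z\Zp$. Il ne reste plus qu'\`a la composer avec l'isomorphisme de la proposition \ref{prop:pi1alggroup} pour obtenir la surjection voulue $T_{G*}\otimes_\Z\Zp\onto\pi_1^\et(G)^{(p')}(-1)$.

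La seule petite subtilit\'e \`a mentionner concerne la nature de $T_G$: dans la d\'efinition de $\pi_1^\alg(G)$ rappel\'ee ci-dessus, $T_G$ d\'esigne un tore maximal de $G^\red$, alors que l'\'enonc\'e du corollaire parle d'un tore maximal de $G$. Mais la projection $G\to G^\red$ a pour noyau le radical unipotent $G^\uu$, donc elle induit un isomorphisme entre les tores maximaux de $G$ et ceux de $G^\red$; on peut ainsi identifier les deux points de vue sans changer $T_{G*}$. Je ne vois pas d'obstacle s\'erieux ici: tout le travail a d\'ej\`a \'et\'e accompli dans la proposition \ref{prop:pi1alggroup}, et la preuve du corollaire se r\'eduit \`a cette manipulation formelle.
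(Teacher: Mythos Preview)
Your proof is correct and matches the paper's intended argument: the corollary is stated without proof in the paper precisely because it follows immediately from Proposition~\ref{prop:pi1alggroup} together with the definition $\pi_1^\alg(G)=T_{G*}/T_{G^\sc *}$ and right-exactness of $-\otimes_\Z\Zp$. Your remark identifying maximal tori of $G$ and of $G^\red$ is a helpful clarification, but otherwise there is nothing to add.
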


\begin{subsec} {\em D\'emonstration du th\'eor\`eme \ref{thm:charp-bis}.}
On remarque que l'on a toujours un morphisme canonique $\coker[\pi_1^\alg(H) \to \pi_1^\alg(G)] \to \coker[H^\tor_* \to G^\tor_*]$.
On applique alors le lemme \ref{lem:sec} au morphisme $(G,G) \to (G,X)$, et on obtient le diagramme commutatif suivant
dont la seconde ligne est exacte :
\begin{equation} \label{diag pi_1}
\xymatrix@C=3mm{
\pi_1^\alg(H) \otimes \Z_{(p')} \ar[r] \ar[d]^{\cong} & \pi_1^\alg(G) \otimes \Z_{(p')} \ar[d]^{\cong} \ar[r]
        & \coker[H^\tor_* \to G^\tor_*] \otimes \Z_{(p')} \ar@{-->}[d] \ar[r] & 0 \\
{\pi}_1^\et(H,1)^{(p')}(-1) \ar[r] &\pi_1^\et(G,1)^{(p')}(-1)\ar[r] & \pi_1^\et(X,x)^{(p')}(-1)\ar[r] & 0 \, .
}
\end{equation}
De la suite exacte courte
$$
1\to H^\sss\to H^\red\to H^\tor\to 1
$$
on d\'eduit une suite exacte courte
$$
0\to \pi_1^\alg(H^\sss)\to \pi_1^\alg(H)\to H^\tor_*\to 0,
$$
voir \cite[Lemme 3.7]{BKG} et \cite[Prop. 6.8]{CT}
(dans \cite{BKG} et \cite{CT}, il est suppos\'e que le corps $k$ est de caract\'eristique nulle,
mais la suite est exacte pour $k$ de caract\'eristique quelconque,
voir \cite[Thm. 3.14]{GA} et \cite[Thm. 3.8]{BG}).
Comme $\pi_1^\alg(H^\sss)$ est un groupe fini et $ H^\tor_*$ est un groupe sans torsion, on voit que
$\pi_1^\alg(H)_\tf=H^\tor_*$
(voir \S \ref{subsec:Ext-complex} pour la notation $_\tf$).
D'autre part, comme $G^\sss$ est simplement connexe, on a
$\pi_1^\alg(G)=G^\tor_*$ (voir remarque \ref{rq pi1}).
On obtient que
$$
\coker[\pi_1^\alg(H)\to\pi_1^\alg(G)]=\coker[H^\tor_*\to G^\tor_*]
$$
et que
$$
\coker[\pi_1^\alg(H)\otimes\Zp\to\pi_1^\alg(G)\otimes\Zp]=\coker[H^\tor_*\to G^\tor_*]\otimes\Zp \, ,
$$
donc la premi\`ere ligne du diagramme \eqref{diag pi_1} est exacte.
Finalement, ce diagramme permet bien de d\'efinir l'isomorphisme souhait\'e (en pointill\'es)
$$
\coker[H^\tor_* \to G^\tor_*] \isoto \pi_1^\et(X,x)^{(p')}(-1) \, .
$$
\qed
\end{subsec}

\section{Caract\'eristique positive : stabilisateur non connexe} \label{s:6}

\begin{theorem}\label{thm:charp non connexe-bis}
Soit $k$ un corps alg\'ebriquement clos de caract\'eristique $p \geq 0$.
Soit $G/k$ un groupe lin\'eaire connexe lisse et $X/k$ un espace homog\`ene de $G$.
Soit $x \in X(k)$, on pose $H := \textup{Stab}_G(x)$.
On suppose que $\Pic(G) = 0$, $H$ est lisse et $H^\kerchar$ est connexe.
Alors il existe un isomorphisme canonique de groupes ab\'eliens :
$$\Ext^0_\Z(\Ghat\to\Hhat,\Z) \otimes_{\Z} \Z_{(p')} \isoto \pi_1^\et(X,x)^{(p')}(-1) \, .$$
\end{theorem}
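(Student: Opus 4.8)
The plan is to imitate, with \'etale prime-to-$p$ fundamental groups in place of topological ones, the proof of Theorem~\ref{thm:main-bis} in \S\,\ref{s:3}: one treats successively the auxiliary spaces $W$, $Z$, $Y$ of \S\,\ref{s:2} and finally $X$, everywhere replacing $\pi_1^\top(\cdot)(-1)$ by $\pi_1^\et(\cdot)^{(p')}(-1)$ and tensoring the $\Ext$-groups with $\Z_{(p')}$. Since $\Z_{(p')}$ is flat over $\Z$, every quasi-isomorphism of bounded complexes of $\Z$-modules used in \S\,\ref{s:3} stays one after $-\otimes_\Z\Z_{(p')}$, so all the purely homological steps carry over verbatim; the only genuine extra work is to supply the required homotopy exact sequences of \'etale prime-to-$p$ fundamental groups.

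For a principal homogeneous space $W$ under a $k$-torus $G_W$ one has $W\cong G_W$ as a $k$-variety, so Lemma~\ref{lem:tore} gives $\pi_1^\et(W,w)^{(p')}(-1)\cong G_{W*}\otimes_\Z\Z_{(p')}=\Ext^0_\Z(\widehat{G_W},\Z)\otimes_\Z\Z_{(p')}$. For $Z$ one proceeds as in \S\,\ref{subsec:case-Z}: $Z=W$ as a $k$-variety, and since $H_Z$ is a subgroup of multiplicative type of the torus $G_Z$ the restriction $\widehat{G_Z}\to\widehat{H_Z}$ is surjective, so the evident morphism of complexes $\widehat{G_W}\to[\widehat{G_Z}\to\widehat{H_Z}\rangle$ is a quasi-isomorphism and remains one after $\otimes_\Z\Z_{(p')}$; the assertion for $Z$ follows.

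For $Y$ one uses, as in \S\,\ref{subsec:case-Y}, the fibration $\mu_*\colon Y\to Z$ with geometrically connected fibre $F:=G^\ssu/H^\kerchar$. First, $\pi_1^\et(F)^{(p')}=1$: the map $G^\ssu\to G^\ssu/H^\kerchar$ is a morphism of homogeneous spaces of $G^\ssu$ with connected stabilizers $\{1\}$ and $H^\kerchar$, so Lemma~\ref{lem:sec} gives a surjection $\pi_1^\et(G^\ssu)^{(p')}\onto\pi_1^\et(F)^{(p')}$; and $\pi_1^\et(G^\ssu)^{(p')}=1$, because $\Pic(G)=0$ implies $G^\sss$ semisimple simply connected (Sansuc~\cite{Sansuc}, Lemme~6.9), hence $\pi_1^\alg(G^\ssu)=\pi_1^\alg(G^\sss)=0$, and one concludes by Proposition~\ref{prop:pi1alggroup} (or by Lemma~\ref{lem:sec} applied to $G^\ssu\to G^\sss$ and Lemma~\ref{lem:uu-sc}). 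It then remains to produce an exact sequence $\pi_1^\et(F)^{(p')}\to\pi_1^\et(Y)^{(p')}\xrightarrow{\mu_*}\pi_1^\et(Z)^{(p')}\to 1$, which forces $\mu_*$ to be an isomorphism and gives the assertion for $Y$ (as $\widehat{G_Y}=\widehat{G_Z}$, $\widehat{H_Y}=\widehat{H_Z}$). Surjectivity of $\mu_*$ follows as in Lemma~\ref{lem:sec} from $\mu_*$ being faithfully flat and quasi-compact with geometrically connected fibres (\cite{SGA1}, expos\'e~IX, corollaire~5.6), and uses no connectedness of stabilizers. The exactness is the delicate point, because here the stabilizers $H_Y\cong H$ and $H_Z\cong H^\mult$ need not be connected, so Lemma~\ref{lem:sec} does not apply literally; I expect this to be the main obstacle. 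One route: the $\pi_0(H)$-Galois covers $Y^0:=G_Y/H_Y^0\to Y$ and $Z^0:=G_Z/H_Z^0\to Z$ are compatible along $\mu_*$, the induced $Y^0\to Z^0$ does have connected stabilizers, so Lemma~\ref{lem:sec} and $\pi_1^\et(F)^{(p')}=1$ give $\pi_1^\et(Y^0)^{(p')}\isoto\pi_1^\et(Z^0)^{(p')}$, and one descends along these two finite Galois covers using the compatibility of the associated fundamental exact sequences over the common group $\pi_0(H)$.

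Finally, for the $Q$-torsor $\pi_*\colon Y\to X$ one uses that $Q$ is a split $k$-torus, so $\pi_*$ is an iterated $\Gm$-torsor; compactifying each $\Gm$-torsor to a $\mathbf{P}^1$-bundle and combining the homotopy exact sequence of that proper smooth bundle (whose fibre has trivial $\pi_1^\et$) with the effect on $\pi_1^\et$ of removing a smooth divisor, one obtains an exact sequence $\pi_1^\et(Q)^{(p')}(-1)\to\pi_1^\et(Y)^{(p')}(-1)\to\pi_1^\et(X)^{(p')}(-1)\to 0$, which matches \eqref{eq:exact1} tensored with $\Z_{(p')}$ (note $\Ext^1_\Z(\widehat Q,\Z)=0$ as $\widehat Q$ is torsion-free, by Lemma~\ref{lem:Ext1}). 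Assembling the commutative ladder exactly as in \S\,\ref{subsec:case-X} — the commutativity of the left-hand square being checked via the cases $W$, $Z$, $Y$ as in \S\,\ref{subsec:diagram} — yields the canonical isomorphism $\Ext^0_\Z(\widehat G\to\widehat H,\Z)\otimes_\Z\Z_{(p')}\isoto\pi_1^\et(X,x)^{(p')}(-1)$, whose independence of the embedding $j\colon H^\mult\into Q$ is checked as in \S\,\ref{subsec:independant}.
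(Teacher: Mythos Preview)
Your overall strategy coincides with the paper's: treat $W$, $Z$, $Y$, $X$ using the constructions of \S\,\ref{s:2}, and the steps for $W$ and $Z$ are identical. The genuine difference is exactly where you locate it — obtaining the fibration exact sequences for $\mu_*\colon Y\to Z$ and $\pi_*\colon Y\to X$ when the stabilizers $H_Y\cong H$ and $H_Z\cong H^\mult$ are not connected.

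The paper does \emph{not} reduce to connected stabilizers. Instead it first extends Brion--Szamuely's theorem to non-connected stabilizers (Proposition~\ref{prop:BrSzvar}: any prime-to-$p$ Galois cover of $X$ is $\widetilde G/\widetilde H$ for a central isogeny $\widetilde G\to G$ and $\widetilde H$ of finite index in the preimage of $H$), and then uses this to prove a variant Lemma~\ref{lem:sec2} of Lemma~\ref{lem:sec}: the exact sequence $\pi_1^\et(X_0)^{(p')}\to\pi_1^\et(X)^{(p')}\to\pi_1^\et(X')^{(p')}\to 1$ holds assuming only that $G_0$ is connected and $H\to H'$ is surjective, with no connectedness on $H,H'$. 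This single lemma then handles both $Y\to Z$ (kernel $G_Y^\ssu$, stabilizer map $H_Y\twoheadrightarrow H_Z$) and $Y\to X$ (kernel $Q$, stabilizer map $H_Y\isoto H$) uniformly. What this buys: Proposition~\ref{prop:BrSzvar} and Lemma~\ref{lem:sec2} are clean tools of independent interest, and the argument is not ad hoc.

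Your route via the $\pi_0(H)$-Galois covers $Y^0\to Y$ and $Z^0\to Z$ can be completed, but the descent you sketch is not a formal five-lemma: the functor $(-)^{(p')}$ is not exact on short exact sequences of profinite groups. What does work is the following. Set $K=\ker\bigl(\pi_1^\et(Y)\to\pi_1^\et(Z)\bigr)=\ker\bigl(\pi_1^\et(Y^0)\to\pi_1^\et(Z^0)\bigr)$. Since $\pi_1^\et(Y^0)^{(p')}\isoto\pi_1^\et(Z^0)^{(p')}$, the image of $K$ in $\pi_1^\et(Y^0)^{(p')}$ is trivial. Now the image of $\pi_1^\et(Y^0)$ in $\pi_1^\et(Y)^{(p')}$ is a closed prime-to-$p$ quotient of $\pi_1^\et(Y^0)$, hence factors through $\pi_1^\et(Y^0)^{(p')}$; so $K$ dies in $\pi_1^\et(Y)^{(p')}$. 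Thus $\pi_1^\et(Y)\to\pi_1^\et(Y)^{(p')}$ factors through $\pi_1^\et(Y)/K\cong\pi_1^\et(Z)$, giving a section of the surjection $\pi_1^\et(Y)^{(p')}\to\pi_1^\et(Z)^{(p')}$ and hence the desired isomorphism. You should spell this out. For $Y\to X$ your iterated-$\G_m$-torsor compactification argument is a legitimate alternative to Lemma~\ref{lem:sec2}, though it requires tracking that the inertia contributions really generate the image of $\pi_1^\et(Q)^{(p')}$; the paper's uniform use of Lemma~\ref{lem:sec2} sidesteps this.
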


Pour prouver le th\'eor\`eme \ref{thm:charp non connexe-bis},
on commence par \'etendre le th\'eor\`eme 1.2(a) de \cite{BrSz} en supprimant l'hypoth\`ese de connexit\'e sur le stabilisateur :

\begin{proposition} \label{prop:BrSzvar}
Soit $G$ un $k$-groupe  connexe lisse, $X$ un $k$-espace homog\`ene de $G$, $x \in X(k)$.
Soit $\widetilde{X} \to X$ un rev\^etement \'etale galoisien de degr\'e premier \`a $p$.
Alors  il existe une isog\'enie centrale $\pi : \widetilde{G} \to G$ et un sous-groupe d'indice fini $\widetilde{H}$ de $\pi^{-1}(H)$
tel que le morphisme naturel $\widetilde{G} / \widetilde{H} \to X$
se factorise en un isomorphisme $\widetilde{G} / \widetilde{H} \isoto \widetilde{X}$.
\end{proposition}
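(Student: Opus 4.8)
On se propose de r\'ealiser $\widetilde{X}$ directement comme espace homog\`ene d'un rev\^etement central de $G$, au moyen d'un produit fibr\'e. Fixons $\tilde{x}\in\widetilde{X}(k)$ au-dessus de $x$, notons $\alpha\colon G\to X$ l'application $g\mapsto g\cdot x$ et posons $E:=G\times_X\widetilde{X}$ (produit fibr\'e le long de $\alpha$ et de $p\colon\widetilde{X}\to X$) : c'est un rev\^etement \'etale fini de $G$ de degr\'e $n:=\deg(\widetilde{X}/X)$, premier \`a $p$. Soit $\widetilde{G}\subseteq E$ la composante connexe de $\tilde{e}:=(e,\tilde{x})$. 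La premi\`ere \'etape consiste \`a constater que $\widetilde{G}\to G$ est un rev\^etement \'etale \emph{galoisien} : le groupe de Galois $\Gamma=\textup{Aut}(\widetilde{X}/X)$ agit sur $E$ au-dessus de $G$, transitivement sur l'ensemble fini des composantes connexes, avec quotient $E/\Gamma=G$ ; par suite le stabilisateur $\Gamma'\subseteq\Gamma$ de la composante $\widetilde{G}$ agit librement sur $\widetilde{G}$ au-dessus de $G$ avec $\widetilde{G}/\Gamma'=G$, ce qui pr\'esente $\widetilde{G}\to G$ comme un $\Gamma'$-torseur connexe de degr\'e $|\Gamma'|$, diviseur de $n$, donc premier \`a $p$. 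On applique alors \cite[Prop.~1.1(a)]{BrSz} \`a ce rev\^etement galoisien de degr\'e premier \`a $p$ : $\widetilde{G}$ se munit d'une structure de $k$-groupe connexe d'\'el\'ement neutre $\tilde{e}$ pour laquelle $\pi:=\mathrm{pr}_G\colon\widetilde{G}\to G$ est une isog\'enie centrale.

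La deuxi\`eme \'etape rel\`eve \`a $\widetilde{G}$ l'action de $G$ sur $X$. Notons $\phi:=\mathrm{pr}_{\widetilde{X}}\colon\widetilde{G}\to\widetilde{X}$ l'autre projection, de sorte que $\phi(\tilde{e})=\tilde{x}$ et $p\circ\phi=\alpha\circ\pi$, et soit $a\colon G\times X\to X$ l'action, $F:=a\circ(\pi\times p)\colon\widetilde{G}\times\widetilde{X}\to X$. Le rev\^etement $F^{*}(\widetilde{X}\to X)$ est trivial : \'etant galoisien de groupe $\Gamma$, cela revient \`a annuler l'homomorphisme classifiant $\pi_1^\et(\widetilde{G}\times\widetilde{X})\to\Gamma$ ; or celui-ci est nul sur l'image de $\pi_1^\et(\widetilde{G})$ (puisque $F$ restreint \`a $\widetilde{G}\times\{\tilde{x}\}$ vaut $\alpha\circ\pi=p\circ\phi$, qui se factorise par $p$) et sur l'image de $\pi_1^\et(\widetilde{X})$ (puisque $F$ restreint \`a $\{\tilde{e}\}\times\widetilde{X}$ vaut $p$), tandis que, tous les rev\^etements en jeu \'etant de degr\'e premier \`a $p$, la formule de K\"unneth pour le groupe fondamental premier \`a $p$ donne $\pi_1^\et(\widetilde{G}\times\widetilde{X})=\pi_1^\et(\widetilde{G})\times\pi_1^\et(\widetilde{X})$ (voir par exemple \cite{SGA1}). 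Une section de $F^{*}(\widetilde{X}\to X)$ passant par $(\tilde{e},\tilde{x},\tilde{x})$ fournit un morphisme $\tilde{a}\colon\widetilde{G}\times\widetilde{X}\to\widetilde{X}$ v\'erifiant $p\circ\tilde{a}=a\circ(\pi\times p)$ et $\tilde{a}(\tilde{e},\tilde{x})=\tilde{x}$ ; par unicit\'e des rel\`evements, le long d'un rev\^etement \'etale fini, d'un morphisme issu d'une vari\'et\'e connexe, $\tilde{a}$ est une action de $\widetilde{G}$ sur $\widetilde{X}$ relevant $a$ via $\pi$, dont l'application orbitale $\tilde{g}\mapsto\tilde{a}(\tilde{g},\tilde{x})$ s'identifie \`a $\phi$. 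L'orbite $O:=\widetilde{G}\cdot\tilde{x}=\phi(\widetilde{G})$ est alors localement ferm\'ee, irr\'eductible, de dimension $\geq\dim X=\dim\widetilde{X}$ (elle se surjecte sur $X$ via $p$ fini), donc dense, et ouverte dans son adh\'erence $\widetilde{X}$ ; son compl\'ementaire $C$ est ferm\'e et $\widetilde{G}$-stable, donc $p(C)$ est ferm\'e et $\widetilde{G}$-stable dans l'unique orbite $X$, d'o\`u $p(C)=\emptyset$ (sinon $\dim C\geq\dim\widetilde{X}$, absurde), c'est-\`a-dire $C=\emptyset$. Ainsi $\widetilde{X}=\widetilde{G}\cdot\tilde{x}\cong\widetilde{G}/\widetilde{H}$ avec $\widetilde{H}:=\textup{Stab}_{\widetilde{G}}(\tilde{x})$.

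Pour conclure : comme $\tilde{a}$ rel\`eve $a$ via $\pi$ et que $\textup{Stab}_G(x)=H$, on a $\widetilde{H}\subseteq\pi^{-1}(H)$ ; de plus $\pi^{-1}(H)$ agit sur la fibre finie $p^{-1}(x)$, transitivement car $\widetilde{G}$ agit transitivement sur $\widetilde{X}$, avec $\widetilde{H}$ pour stabilisateur de $\tilde{x}$, de sorte que $[\pi^{-1}(H):\widetilde{H}]=|p^{-1}(x)|=n<\infty$. Enfin le morphisme naturel $\widetilde{G}/\widetilde{H}\to\widetilde{G}/\pi^{-1}(H)=X$ se factorise en $\widetilde{G}/\widetilde{H}\isoto\widetilde{X}\xrightarrow{p}X$, ce qui est l'\'enonc\'e cherch\'e. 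Le point d\'elicat est la deuxi\`eme \'etape --- la construction de $\tilde{a}$, et en amont le contr\^ole du degr\'e de $\widetilde{G}\to G$ : l'un et l'autre d\'ependent de fa\c con essentielle de l'hypoth\`ese que $\widetilde{X}\to X$ soit galoisien (afin que $\widetilde{G}\to G$ le soit, de degr\'e diviseur de $n$ donc premier \`a $p$, pour pouvoir invoquer \cite[Prop.~1.1(a)]{BrSz}), et il convient de ne travailler qu'avec le groupe fondamental \'etale premier \`a $p$, pour lequel la formule de K\"unneth subsiste en caract\'eristique $p$ alors qu'elle tombe en d\'efaut pour le groupe fondamental complet ; on notera enfin que $\widetilde{H}$ peut ne pas \^etre lisse, ce qui est ici sans cons\'equence.
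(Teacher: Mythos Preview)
Your proof is correct, and the first step --- taking the connected component $\widetilde{G}$ of the fibre product $G\times_X\widetilde{X}$, checking it is a Galois cover of $G$ of degree prime to $p$, and invoking \cite[Prop.~1.1(a)]{BrSz} to obtain the central isogeny --- is exactly the paper's argument.

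The divergence is in the second step. The paper never constructs an action of $\widetilde{G}$ on $\widetilde{X}$: it defines $\widetilde{H}$ directly as the fibre $\phi^{-1}(\tilde{x})$ of the projection $\phi\colon\widetilde{G}\to\widetilde{X}$, and shows this fibre is a subgroup by a pointwise rigidity argument (for fixed $h\in\widetilde{H}(k)$, the two maps $\tilde{g}\mapsto\phi(\tilde{g})$ and $\tilde{g}\mapsto\phi(\tilde{g}h)$ are lifts of the same map to $X$ agreeing at $\tilde{e}$, hence coincide by uniqueness of lifts along an \'etale cover from a connected source, \cite[Cor.~5.3.3]{Sz}). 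Then $\widetilde{G}/\widetilde{H}\to\widetilde{X}$ is a connected finite \'etale cover whose fibre over $\tilde{x}$ is a single point, so it is an isomorphism. This is entirely elementary: no K\"unneth, no transitivity argument.

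Your route --- lifting the whole $G$-action to a $\widetilde{G}$-action on $\widetilde{X}$, then proving transitivity --- is more conceptual and yields the extra information that $\widetilde{X}$ is genuinely a $\widetilde{G}$-homogeneous space compatible with $X$. The cost is the prime-to-$p$ K\"unneth formula $\pi_1^{(p')}(\widetilde{G}\times\widetilde{X})\cong\pi_1^{(p')}(\widetilde{G})\times\pi_1^{(p')}(\widetilde{X})$. This is true, but your citation ``voir par exemple \cite{SGA1}'' is imprecise: SGA1, Exp.~X, Cor.~1.7 requires one factor proper, and Exp.~XIII treats proper or tamely ramified fibrations, neither of which covers an affine $\widetilde{G}$ directly. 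A cleaner reference would be Orgogozo's work on the prime-to-$p$ fundamental group, or one can bypass K\"unneth altogether by the paper's pointwise argument. Your transitivity argument via dimension and $G$-stability of $p(C)$ is fine.
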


\def\Gtil{{\widetilde{G}}}
\def\Htil{{\widetilde{H}}}
\begin{proof}
On fixe un point $\widetilde{x} \in \widetilde{X}(k)$ au-dessus de $x$.
On consid\`ere le morphisme quotient $\varphi : G \to X$ d\'efini par l'action de $G$ sur le point $x$,
et on consid\`ere le diagramme cart\'esien suivant :
\begin{displaymath}
\xymatrix{
Y \ar[d]^{\pi} \ar[r]^{\widetilde{\varphi}} & \widetilde{X} \ar[d] \\
G \ar[r]^{\varphi} & X \, ,
}
\end{displaymath}
o\`u $Y=G \times_{X} \widetilde{X}$.
On note $F$ la fibre de $Y$ au-dessus de $1\in G(k)$, et on note $\Gamma=\Aut(Y/G)$,
alors $\Gamma$ agit transitivement sur $F$.
La vari\'et\'e  $Y$ n'est pas connexe en g\'en\'eral.
On note $\widetilde{G}$ la composante connexe de $Y$
contenant le point marqu\'e  $y=(1, \widetilde{x})$.
Alors la restriction $\pi_\Gtil$ de $\pi$ \`a $\widetilde{G}$ est un rev\^etement \'etale de $G$.

Prouvons que $\pi_\Gtil\colon \Gtil\to G$ est un rev\^etement galoisien.
On d\'efinit $F_\Gtil=F\cap\Gtil$ et
$$
\Gamma_\Gtil=\{\gamma\in\Gamma\ |\ \gamma(y)\in F_\Gtil\}\;.
$$
Alors $\Gamma_\Gtil=\Stab_\Gamma(\Gtil)$,
donc $\Gamma_\Gtil$ est un sous-groupe de $\Gamma$,
$\Gamma_\Gtil$ agit sur $\Gtil$ au-dessus $G$,
et $\Gamma_\Gtil$ agit transitivement sur $F_\Gtil$.
On voit que $\Aut(\Gtil/G)$ agit transitivement sur $F_\Gtil$\,,
donc $\Gtil\to G$ est un rev\^etement galoisien.

Alors la proposition 1.1(a) de \cite{BrSz}
assure que la vari\'et\'e $\Gtil$ a une structure de groupe alg\'ebrique sur $k$,
telle que
$\pi_\Gtil : \widetilde{G} \to G$ soit une isog\'enie centrale de $k$-groupes.
On peut supposer que $1_\Gtil=y:=(1_G, \widetilde{x})$.
Or on dispose du diagramme commutatif suivant :
 \begin{displaymath}
\xymatrix{
\widetilde{G} \ar[d]^{\pi_\Gtil} \ar[r]^{\widetilde{\varphi}_\Gtil} & \widetilde{X} \ar[d] \\
G \ar[r]^{\varphi} & X \, .
}
\end{displaymath}
On d\'efinit $\widetilde{H}:= \widetilde{\varphi}_\Gtil^{-1}(\widetilde{x})$.

Montrons que $\widetilde{H}$ est un sous-groupe alg\'ebrique de $\widetilde{G}$.
On fixe $h\in \Htil(k)$.
Pour $g=1_\Gtil\in \Gtil(k)$ on a
$$
\widetilde{\varphi}_\Gtil(gh)=\widetilde{\varphi}_\Gtil(h)=\widetilde{x}=\widetilde{\varphi}_\Gtil(g).
$$
Comme $\Gtil$ est connexe, le corollaire 5.3.3 de \cite{Sz} assure que $\widetilde{\varphi}_\Gtil(gh)=\widetilde{\varphi}_\Gtil(g)$
pour tout $g\in G(k)$.
On voit que si $h\in \Htil(k)$, le morphisme $\widetilde{\varphi}_\Gtil$ est $h$-invariant \`a droite.
Inversement, si le morphisme $\widetilde{\varphi}_\Gtil$ est $h$-invariant \`a droite,
alors $\widetilde{\varphi}_\Gtil(h)=\widetilde{x}$ et donc $h\in\Htil(k)$.
Ainsi $\Htil\subset\Gtil$ est le stabilisateur de $\widetilde{\varphi}_\Gtil$ et c'est donc un sous-groupe algebrique.

Comme le morphisme  $\widetilde{\varphi}_\Gtil$ est $\Htil$-invariant, il induit un morphisme naturel
$\overline{\varphi}_\Gtil : \widetilde{G} / \widetilde{H} \to \widetilde{X}$ qui est un rev\^etement \'etale fini connexe.
La d\'efinition de $\widetilde{H}$ assure que la fibre de $\overline{\varphi}_\Gtil$
au-dessus de $\widetilde{x}$ consiste en un seul point,
donc $\overline{\varphi}_\Gtil$ est un isomorphisme de vari\'et\'es.
\end{proof}

On montre ensuite la variante suivante du lemme \ref{lem:sec} :

\begin{lemma} \label{lem:sec2}
Soient $G,G'$ deux $k$-groupes alg\'ebriques connexes et $f : (G,X,x) \to (G', X', x')$ un morphisme d'espaces homog\`enes
\`a stabilisateurs respectifs $H := \textup{Stab}_G(x)$ et $H' := \textup{Stab}_{G'}(x')$,
de sorte que les morphismes $G \to G'$ et $H \to H'$ soient surjectifs. On note $G_0 := \ker(G \to G')$ et $X_0 := f^{-1}(x')$.
On suppose $G_0$ connexe. Alors on a une suite exacte de groupes :
$$
\pi_1^\et(X_0,x)^{(p')} \to \pi_1^\et(X,x)^{(p')} \xrightarrow{f_*} \pi_1^\et(X',x')^{(p')} \to 1 \, .
$$
\end{lemma}

\begin{proof}
On v\'erifie que $X_0$ est naturellement un espace homog\`ene de $G_0$, de stabilisateur $H_0 := \ker(H \to H')$.
\smallskip

     Montrons d'abord la surjectivit\'e de $f_*$. En utilisant \cite{SGA1}, expos\'e IX, corollaire 5.6,
il suffit de v\'erifier que $f$ est un morphisme universellement submersif \`a fibres g\'eom\'etriquement connexes,
ce qui r\'esulte du fait que $f$ est fid\`element plat et quasi-compact, ainsi que du fait que $G_0$ est connexe.
\smallskip

 Montrons maintenant l'exactitude de la suite en $\pi_1^\et(X)^{(p')}$.
Pour cela, on utilise la proposition \ref{prop:BrSzvar} et des arguments similaires \`a ceux de la preuve du lemme \ref{lem:sec}.
Soit un rev\^etement \'etale galoisien $Y \to X$ (de degr\'e premier \`a $p$) tel que $Y_0 := Y \times_{X} X_0$
admette une section $s_0 : X_0 \to Y_0$. La proposition \ref{prop:BrSzvar} assure qu'il existe un groupe lin\'eaire connexe $\widetilde{G}$,
une isog\'enie centrale $\pi : \widetilde{G} \to G$ et un sous-groupe $\widetilde{H}$ de $\widetilde{G}$ d'indice fini dans $\pi^{-1}(H)$
tel que le morphisme naturel $\widetilde{G} / \widetilde{H} \to X$ se factorise
en un isomorphisme $\widetilde{G} / \widetilde{H} \to Y$.
On d\'efinit $Y_0 := Y \times_X X_0$ et le $k$-groupe $\widetilde{G_0} := \widetilde{G} \times_G G_0$.
Puisque $G_0$ est connexe, la section $s_0 : X_0 \to Y_0$ induit une section $\widetilde{s_0} : G_0 \to \widetilde{G_0}$,
dont on v\'erifie que c'est un morphisme de groupes. Cela permet d'identifier $G_0$ avec la composante neutre de $\widetilde{G_0}$,
et donc de voir $G_0$ comme un sous-groupe distingu\'e de $\widetilde{G}$. On note alors $\widetilde{G'} := \widetilde{G} / G_0$.
D\'efinissons $Y'$ comme le quotient de $\widetilde{G'}$ par l'image $\widetilde{H} / \widetilde{H_0}$ de $\widetilde{H}$ dans $\widetilde{G'}$.
Alors $Y' \to X'$ est un rev\^etement \'etale fini connexe, et par construction on a $Y' \times_{X'} X \cong \widetilde{G} / \widetilde{H}$
au-dessus de $X$, donc on a une factorisation $Y' \times_{X'} X \cong \widetilde{G} / \widetilde{H} \isoto Y \to X$.
Cela conclut la preuve de l'exactitude de la suite du lemme.
\end{proof}

\begin{subsec} {\em D\'emonstration du th\'eor\`eme \ref{thm:charp non connexe-bis}.}
Si $X$ est un espace homog\`ene satisfaisant les hypoth\`eses du th\'eor\`eme \ref{thm:charp non connexe-bis},
on reprend les constructions auxiliaires de la section \ref{s:2}. On dispose des morphismes surjectifs de paires
$$
(G,X) \leftarrow (G_Y,Y) \to (G_Z, Z) \to (G_W, W) \, .
$$
On v\'erifie facilement que chacun des deux premiers morphismes de paires v\'erifie les hypoth\`eses du lemme \ref{lem:sec2}.
Par cons\'equent, le lemme \ref{lem:sec2} assure que les suites naturelles suivantes
$$
\pi_1^\et(G^\sss/H^\kerchar,y)^{(p')} \to \pi_1^\et(Y,y)^{(p')} \to \pi_1^\et(Z,z)^{(p')} \to 1
$$
$$
\pi_1^\et(Q,1)^{(p')} \to \pi_1^\et(Y,y)^{(p')} \to \pi_1^\et(X,x)^{(p')} \to 1
$$
sont exactes.
Puisque le morphisme $G^\sss \to G^\sss / H^\kerchar$ est universellement submersif et \`a fibres g\'eom\'etriquement connexes,
le corollaire 5.6 de \cite{SGA1}, expos\'e IX, assure que le morphisme
$\pi_1^\et(G^\sss,1)^{(p')} \to \pi_1^\et(G^\sss/H^\kerchar,y)^{(p')}$
est surjectif. Or $\pi_1^\et(G^\sss,1)^{(p')} = 0$ car $G^\sss$ est semi-simple simplement connexe.
Donc finalement $\pi_1^\et(G^\sss/H^\kerchar,y)^{(p')} = 0$, et
on a  un isomorphisme canonique
\begin{equation}\label{eq:YZ}
\pi_1^\et(Y,y)^{(p')} \isoto \pi_1^\et(Z,z)^{(p')} \, .
\end{equation}

On va maintenant d\'emontrer le th\'eor\`eme pour $(G_W,W)$, puis pour $(G_Z,Z)$, puis pour $(G_Y,Y)$, et enfin pour $(G,X)$.

Comme $W$ est un espace principal homog\`ene du tore $G_W$,
 par le lemme \ref{lem:tore}
il y a un isomorphisme canonique
$G_{W*}\otimes_\Z \Zp \isoto \pi_1^\et(W,w)^{(p')}(-1).$
Puisque $G_{W*}=\Hom(\widehat{G_W},\Z)=\Ext_\Z^0(\widehat{G_W},\Z)$, on obtient un isomorphisme canonique
$$
\Ext_\Z^0(\widehat{G_W},\Z)\otimes_\Z \Zp \isoto \pi_1^\et(W,w)^{(p')}(-1),
$$
ce qui d\'emontre le th\'eor\`eme pour $(G_W,W)$.

Comme $W=Z$ et $\Ext_\Z^0(\widehat{G_W},\Z)
=\Ext^0_\Z([\widehat{G_Z}\to\widehat{H_Z}\rangle,\Z)$ (voir \S\,\ref{subsec:case-Z}),
on obtient un isomorphisme canonique
\begin{equation}\label{eq:Z}
\Ext^0_\Z([\widehat{G_Z}\to\widehat{H_Z}\rangle,\Z) \otimes \Z_{(p')} \isoto \pi_1^\et(Z,z)^{(p')}(-1) \, ,
\end{equation}
ce qui d\'emontre le th\'eor\`eme pour $(G_Z,Z)$.

On sait que $\widehat{G_Y} = \widehat{G_Z}$ et $\widehat{H_Y} = \widehat{H_Z}$,
donc on  d\'eduit  de \eqref{eq:YZ} et \eqref{eq:Z} un isomorphisme canonique
$$
\Ext^0_\Z([\widehat{G_Y}\to\widehat{H_Y}\rangle,\Z) \otimes \Z_{(p')} \isoto \pi_1^\et(Y,y)^{(p')}(-1) \, ,
$$
ce qui d\'emontre le th\'eor\`eme pour $(G_Y,Y)$.

\def\foneprime{{ \framebox{\makebox[\totalheight]{$1'$}} }}

D\'eduisons-en maintenant le th\'eor\`eme pour $(G,X)$ : consid\'erons le diagramme suivant \`a lignes exactes:
{\small
\begin{equation}\label{eq:diag-p'}
\xymatrix@C=5mm{
\Ext^0_\Z(\widehat{Q},\Z) \otimes \Z_{(p')} \ar[r] \ar[d]^{\cong} \ar@{}[dr]|{\foneprime}
& \Ext^0_\Z([\widehat{G_Y}\to\widehat{H_Y}\rangle,\Z)
\otimes \Z_{(p')} \ar[d]^{\cong} \ar[r] & \Ext^0_\Z([\widehat{G}\to\widehat{H}\rangle,\Z) \otimes \Z_{(p')} \ar@{-->}[d] \ar[r] & 0 \\
{\pi}_1^\et(Q,1)^{(p')}(-1) \ar[r]^{\lambda_*} &\pi_1^\et(Y,y)^{(p')}(-1)\ar[r]^{\varphi_*} & \pi_1^\et(X,x)^{(p')}(-1)\ar[r] & 0 \, ,
}
\end{equation}
}
dont l'exactitude de la seconde ligne a \'et\'e d\'emontr\'ee plus haut, et dont celle de la premi\`ere
provient de la suite exacte \eqref{eq:exact1} et
de l'exactitude \`a droite du produit tensoriel.
On d\'emontre que le rectangle \foneprime est commutatif comme on d\'emontre
 la commutativit\'e du rectangle \fone
du diagramme \eqref{eq:Q}.
Le diagramme  \eqref{eq:diag-p'} permet bien de d\'efinir la fl\`eche en pointill\'es,
dont on d\'emontre comme en \S\,\ref{subsec:independant}
qu'elle ne d\'epend pas du plongement $j : H^\tor \to Q$. Finalement, cela d\'emontre que l'on a bien un isomorphisme canonique
$$
\Ext^0_\Z([\widehat{G}\to\widehat{H}\rangle,\Z) \otimes \Z_{(p')} \isoto \pi_1^\et(X,x)^{(p')}(-1) \, ,
$$
ce qui conclut la preuve du th\'eor\`eme \ref{thm:charp non connexe-bis}.
\qed
\end{subsec}

\medskip
\noindent
{\bf Remerciements :} Nous remercions  chaleureusement Tam\'as Szamuely pour ses pr\'ecieux commentaires.

\bigskip
{\small

{\scshape
Borovoi: Raymond and Beverly Sackler  School of Mathematical Sciences, Tel Aviv University,
6997801 Tel Aviv, Israel }
\smallskip

{\it E-mail: }
\url{borovoi@post.tau.ac.il}
\bigskip

{\scshape
Demarche: Sorbonne Universit\'es, UPMC Univ Paris 06, IMJ-PRG, UMR 7586 CNRS, Univ Paris Diderot, Sorbonne Paris Cit\'e, F-75005, Paris, France}
\smallskip

{\it E-mail: }
\url{cyril.demarche@imj-prg.fr}
}

\end{document}